\theoremstyle{plain} 
\newtheorem{theorem}{Theorem}[section] %numbering by section 
\newtheorem{corollary}[theorem]{Corollary} 
\newtheorem{proposition}[theorem]{Proposition} 
\newtheorem{lemma}[theorem]{Lemma} 
\theoremstyle{definition} 
\newtheorem{definition}[theorem]{Definition} 
\theoremstyle{remark} 
\newtheorem{remark}[theorem]{Remark} 
\newtheorem{example}[theorem]{Example} 
\def\R{\mathbb{R}} 
\def\N{\mathbb{N}} 
\def\Z{\mathds{Z}} 
\def\C{\mathds{C}} 
\def\cA{\mathcal{A}} 
\def\cC{\mathcal{C}}
\def\cG{\mathcal{G}}
\def\cH{\mathcal{H}}             
\def\cJ{\mathcal{J}}  
\def\cK{\mathcal{K}}
\def\cP{\mathcal{P}} 
\def\cR{\mathcal{R}}
\def\cV{\mathcal{V}} 
\def\cW{\mathcal{W}} 
\def\cZ{\mathcal{Z}}
\newcommand\pa{\partial}
\author[Catarina Carvalho]{Catarina Carvalho}
\address{C. Carvalho, Instituto Superior T\'ecnico, Math. Dept., UTL,
Av. Rovisco Pais, 1049-001 Lisbon, Portugal} \email{ccarv@math.ist.utl.pt} 
\author[Yu Qiao]{Yu Qiao}
\address{Y. Qiao, Chern Institute of Mathematics,
        Nankai University, Tianjin 300071 \\
       People's Republic of China} \email{fishqiao@gmail.com}
\thanks{ \textit{2010 Mathematics Subject Classification}: Primary 58H05, Secondary 22A22, 31A10, 31B10, 46L80 47G30, 47L80.  \textit{Keywords and phrases}: Layer potentials method. Conical domains. Desingularization.  Groupoid $C^*$-algebras. Weighted Sobolev spaces. Fredholmness.}
\date{\today} \title[Layer potentials $C^*$-algebras of
  domains with conical points]{Layer Potentials C*-algebras of domains with conical
  points}
\begin{document} 

%\tableofcontents 
%\cleardoublepage 
%------------------------------------------------------------------ 

%---Abstract------------------------------------------------------- 
%------------------------------------------------------------------ 

\begin{abstract} 
To a domain with conical points $\Omega$, we associate a natural
$C^*$--algebra that is motivated by the study of boundary value
problems on $\Omega$, especially using the method of layer
potentials. In two dimensions, we allow $\Omega$ to be a domain with
ramified cracks. We construct an explicit groupoid associated to $\pa
\Omega$ and use the theory of pseudodifferential operators on
groupoids and its representations to obtain our layer potentials $C^*$-algebra. We
study its structure, compute the associated $K$-groups, and prove Fredholm conditions for the natural pseudodifferential operators affiliated to
this $C^*$-algebra.
\end{abstract} 

\maketitle 

\tableofcontents

%------------------------------------------------------------------ 
%---Introduction--------------------------------------------------- 
%------------------------------------------------------------------ 

\section*{Introduction\label{Introduction}} 

Let $\Omega$ be a bounded domain with conical points in
$\mathbb{R}^n$, $n \ge 2$, that is, $\Omega$ is locally diffeomorphic
to a cone with smooth, possibly disconnected, basis.  To $\Omega$, or
more precisely to $\pa \Omega$, we associate a natural $C^*$--algebra, the \emph{ layer potentials $C^*$-algebra}, 
that is motivated by the study of boundary value problems on $\Omega$,
especially by applications of the method of layer potentials. In two
dimensions, we allow $\Omega$ to be a domain with ramified cracks. The main aim of this paper is to prove Fredholm conditions for the natural pseudodifferential operators affiliated to
the layer potentials $C^*$-algebra. We make use of pseudodifferential calculus on groupoids, so our first step is to associate to $\Omega$ a \emph{boundary groupoid} and study its structure, as well as the structure of the resulting groupoid $C^*$-algebra. Moreover, we compute the associated $C^*$-algebraic $K$-groups and show that they depend only on the number of conical points. We expect our Fredholm criterion to have applications to boundary operators coming from the study of boundary value problems in $\Omega$.

One of the classical approaches to solving boundary problems for
(strongly) elliptic equations is via the method of layer potentials,
which reduces differential equations to  boundary integral
equations. More explicitly, by reduction, we want to invert an
operator of the form ``$\frac{1}{2} +K$" on appropriate boundary
function spaces. For instance, if the boundary is $\cC^2$, then the
relevant operator $K$ is compact \cite{Fol, Kress} on
$L^2(\pa\Omega)$. So the operator $\frac{1}{2}+K$ is Fredholm. Hence
we can apply the classical Fredholm theory to the operator
$\frac{1}{2}+K$ to solve the Dirichlet problem. If the boundary is
$\cC^1$, then the integral operator $K$ on $\pa \Omega$ is still
compact \cite{FJR} on $L^2(\pa\Omega)$, but that no longer holds if
there are singularities on the boundary \cite{Els, FJL, Kon, Kress,
  Lew, LP, IMitrea2, IMitrea3, MitreaNistor}. In the singular case, it
is therefore natural to look for larger $C^*$-algebras containing
these boundary integral operators and where we still have boundedness
and Fredholm criteria.  See, for instance, the survey \cite{Mazya}, where the importance of understanding algebras of pseudodifferential operators on singular spaces is emphasized.  In this paper, we tackle the case of conical domains, possibly with cracks and make some progress in this direction. Our approach is to construct a Lie
groupoid associated to $\pa \Omega$ and take its convolution
$C^*$-algebra, so that tools from analysis on groupoids become available.

We first consider a desingularization $ \Sigma(\Omega)$ of $\Omega$ as
in {\cite{BMNZ, Kon, MelroseAPS}},
 which in this case
basically amounts to replacing a, possibly disconnected, neighborhood
of the conical points by a cylinder. One
obtains in that way a manifold with corners, which moreover has the
structure of a Lie manifold with boundary \cite{ALN, BMNZ}. In particular
there are naturally defined Sobolev spaces behaving much as in the
smooth boundary case, and nice regularity and Fredholmness criteria
\cite{ALN, LMN, LN}.  On the boundary, we have a decomposition
\begin{equation*}
  \pa \Sigma(\Omega) = \pa' \Sigma(\Omega) \cup \pa'' \Sigma(\Omega),
\end{equation*}
where $\pa' \Sigma(\Omega) $ are the hyperfaces that correspond to
faces of $\Omega$ and $\pa'' \Sigma(\Omega) $ are the hyperfaces at
infinity. The space $\pa' \Sigma(\Omega)$ can be identified with a
desingularization of $\pa \Omega$. The layer potentials  $C^*$-algebra is then defined
as the $C^*$-algebra of a suitable Lie groupoid $\cG$ with units $\pa'
\Sigma(\Omega)$ (Definition \ref{def.layerC*}). We use the theory of pseudodifferential operators on
groupoids to identify $C^*(\cG)$ with an algebra of operators.  In
fact, $C^*(\cG)$ is an ideal of the norm closure of the
  algebra of order zero pseudodifferential operators on
$\cG$. There is a bounded, injective representation $\pi$ of the algebra of pseudodifferential operators on $\cG$ on $C_c^\infty(\pa' \Sigma(\Omega))$, mapping $C^*(\cG)$ to the compact operators.
Moreover, the Sobolev spaces $H^m(\pa' \Sigma(\Omega))$, defined using the Lie structure, can be identifed \cite{BMNZ} with weighted Sobolev spaces $ \cK_{a}^m(\pa \Omega)$ on $\pa \Omega$, $a\in \R$ (see (\ref{def.weighted}) for the precise definition).
We show that Fredholm criteria for operators on
groupoids, as in \cite{LMN, LN}, apply in our case, so that we obtain (Theorem \ref{fredholm}) 
that Fredholmness of an operator
$$\pi(P):
\cK^{m}_{\frac{n-1}{2}}(\partial\Omega) \rightarrow \cK^{0}_{\frac{n-1}{2}}(\partial\Omega) $$
is equivalent to ellipticity and invertibility of a
family of operators 
\begin{equation*}
  P_x : H^{m}(\R^+ \times \pa \omega_p)\rightarrow L^2(  \R^+ \times \pa \omega_p)
\end{equation*}
where $p$ is a conical point, $\omega_p$ is the basis of the (local) cone at $p$, and $x$ is at the boundary of $\pa' \Sigma(\Omega)$. We also obtain (Theorem \ref{fredholm2}) that we can replace the second condition above with invertibility of a family of Mellin convolution operators on $\R^+ \times \pa \omega_p$. In two dimensions, we allow our domain to have cracks (Corollary \ref{fredholm.crack}). Fredholm conditions of this form, which are often referred generally as full ellipticity, appear in many contexts related to index theory on singular spaces, see for instance \cite{CordesBVP, EgorovSchulze, MelroseAPS, MelroseScattering, Nistor1, SchroheSG, SchroheSchulze1, SchroheSchulze2, Schulze} (and references therein).

When there are no singularities, the groupoid $\cG$ reduces to the
pair groupoid of $\pa\Omega$ and the $C^*$-algebra $C^*(\cG)$ is then
isomorphic to the algebra of compact operators on $\pa
\Omega$. Consequently, the representation theory of $C^*(\cG)$ -- in
the case when there are no singularities -- can be used to recover
results from Fredholm theory.

In the case of a straight cone with basis $\omega \subset S^{n-1}$,
the desingularization is $\Sigma(\Omega)=[0,\infty)\times
  \overline{\omega}$, and $\pa' \Sigma(\Omega)=[0,\infty)\times \pa
\omega$.  Our construction gives $$\cG= ([0,\infty)\rtimes \R^+)\times
  (\pa \omega)^2,$$ where $[0,\infty)\rtimes \R^+$ denotes the action
groupoid and $ (\pa \omega)^2$ the pair groupoid (see Section
\ref{straightcone} for details).  When the basis $\pa \omega$ of the
cone at a singular point $P$ is disconnected, which is always the case
in two dimensions, for instance, the desingularization will associate
several boundary faces to $P$. We allow here interaction between these
faces at the groupoid level, in that there will be arrows between
different connected components. This will be important in
applications.  

Since $\pa' \Sigma(\Omega)$ is a manifold
with smooth boundary, one can also consider Melrose's $b$-calculus
\cite{MelroseAPS} to obtain a well-behaved class of pseudodifferential
operators on $\pa' \Sigma(\Omega)$. Our pseudodifferential calculus
contains the $b$-pseudodifferential operators, in that the boundary
groupoid defined here contains the $b$-groupoid as an open
subgroupoid. The main difference at the groupoid level is that in the usual $b$-calculus, there is no interaction (no arrows) between the different faces at $P$.

Groupoids and groupoid $C^*$-algebras have appeared useful in the
analysis over singular spaces and, in particular, spaces with conical singularities, see for instance \cite{MonthubertSchrohe, AJ2, DebordLescure1, DebordLescure2, DLN, LMN, LN, Monthubert1, MonthubertNistor, MRen}.

  In \cite{DebordLescure1, DebordLescure2}, Debord and Lescure associated
to a pseudomanifold $M$ with a conical singularity a Lie groupoid
which can be used to prove index theorems \cite{DebordLescure2, DLN}
and to deal with elliptic theory on manifolds with conical
singularities \cite{Lescure}. The general idea is to blow up the
conical point to get a cylinder, then glue the pair groupoid of this
cylinder and the tangent space of the smooth part of $M$ in a way such
that there is a natural smooth structure on the resulting
groupoid. (This groupoid is called `a tangent space of $M$' in
\cite{DebordLescure2}.)

On a different line, the $C^*$-algebra of the transformation groupoid
(Example \ref{expl_transf_grpd}) $\cH:=[0,\infty)\rtimes \R^+$ is the
algebra of Wiener-Hopf operators on $\R^+$. In the work of Muhly and
Renault \cite{MRen}, they used groupoid techniques to identify the
$C^*$-algebra generated by Wiener-Hopf operators defined over
polyhedral cones or homogeneous, self-dual cones, with the
$C^*$-algebra of a locally compact measured groupoid. Their
construction is motivated by the study of the structure of the
$C^*$-algebra generated by multivariable Weiner-Hopf operators from the
groupoid point of view. Moreover, in \cite{AJ2} this groupoid
approach was used to analyze the structure of the (generalized)
Wiener-Hopf $C^*$-algebra and do index theory for Wiener-Hopf
operators on cones.

The construction presented in this paper has different aims, in that
it is motivated by PDE's and comes from the nature of the
singularities. Our general purpose is that certain boundary
convolution integral operators are in fact in the groupoid
$C^*$-algebra, so we need to consider groupoids over the
(desingularized) boundary of the conical domain. Furthermore, from this fact and the results presented here, we will be able to show that these integral operators are Fredholm between suitable weighted Sobolev spaces for domains with conical points of dimension greater than $3$. However, for domains with cracks, the resulting layer potential operators are no longer Fredholm. All these issues will be analyzed thoroughly in a forthcoming paper.

Let us briefly review the contents of each section. In Section
\ref{groupoid}, we review some basic knowledge of Lie groupoids, define pseudodifferential operators on a Lie
groupoid and, from this, we define the $C^*$-algebra
of a Lie groupoid.  In Section \ref{weightedspaces}, we give the main
concepts concerning domains with conical points, including the
desingularization $\Sigma(\Omega)$ and the definition of weighted
Sobolev spaces on $\Omega$ and $\pa\Omega$, which will be the natural domains of our operators.  In Section
\ref{straightcone}, we consider the case of straight
cones. We construct a canonical Lie groupoid over the desingularized boundary (a cylinder), whose $C^*$-algebra coincides with that of the  Toeplitz / Wiener-Hopf operators.
 In Section \ref{GpdConstruction}, we generalize these constructions to the 
case of domains with conical points. We construct explicitly a
canonical Lie groupoid $\cG$ associated to $\pa\Omega$, the so-called boundary groupoid,  and study the
properties of the groupoid $C^*$-algebra $C^*(\cG)$, which we dub the layer potentials $C^*$-algebra. In particular, we show that the boundary groupoid and the layer potentials $C^*$-algebra only depend, up to equivalence,  on the number of singularities of the conical domain. In dimension two, we allow our domains to have ramified cracks; this case is considered in Section \ref{crack}. In Section \ref{Ktheory}, we
compute the $K$-theory of  the layer potentials $C^*$-algebra $C^*(\cG)$ and of the indicial algebra at the boundary.  Lastly, in Section
\ref{Fredholmness}, we obtain a Fredholm criterion for
pseudodifferential operators on $\cG$ (including $C^*(\cG)$).

\vskip 2ex
We would like to thank Victor Nistor for the suggestion of this problem and stimulating thinking. And we are grateful to Anna Mazzucato and John Roe for many useful discussions and suggestions.

%%%%%%%%%%%%%%%%%%%%%%%%%%%%%%%%%%%%%%%%%%%%%%%%%%%
\smallskip

\section{
Pseudodifferential Operators on Groupoids and Groupoid
$C^*$-algebras \label{groupoid}}

\subsection{Lie groupoids and Lie algebroids}
In this subsection, we review some basic facts on Lie groupoids and Lie
algebroids. We begin with the definition of groupoids.

\begin{definition}
A \emph{groupoid} is a small category $\mathcal{G}$ in which each arrow is invertible.  
\end{definition}

Let us make this definition more explicit \cite{CannasWeinstein, LN,
  MM, Ren}. A groupoid $\mathcal{G}$ consists of two sets, a set of
objects (or units) $\mathcal{G}_0$ and a set of arrows
$\mathcal{G}_1$. Usually we shall denote the space of units of
$\mathcal{G}$ by $M$ and we shall identify $\mathcal{G}$ with
$\mathcal{G}_1$. Each object of $\mathcal{G}$ can be identified with
an arrow of $\mathcal{G}$. We have an injective map
$u:M:=\mathcal{G}_0\rightarrow \mathcal{G}_1$, where $u(x)$ is the
identity arrow of an object $x$. To each arrow $g\in \mathcal{G}$ we
associate two units: its domain $d(g)$ and its range $r(g)$. The
multiplication $\mu(g,h)=gh$ of two arrows $g,h\in \mathcal{G}$ is not
always defined; it is defined exactly when $d(g)=r(h)$. The
multiplication is associative. The inverse of an arrow is denoted by
$g^{-1}=\iota(g)$.

A groupoid $\mathcal{G}$ is therefore completely determined by the
sets $\mathcal{G}_0$, $\mathcal{G}_1$ and the structural maps
$d,r,\mu,u,\iota$. Consequently, we sometimes denote $\mathcal{G}=(
\mathcal{G}_0, \mathcal{G}_1,d, r, \mu,u,\iota)$. The structural maps
satisfy the following properties:
\begin{enumerate}
\item $d(hg)=d(g)$, $r(hg)=r(h)$,
\item $k(hg)=(kh)g$
\item $u(r(g))g=g=gu(d(g))$, and
\item $d(g^{-1})=r(g)$, $r(g^{-1})=d(g)$, $g^{-1}g=u(d(g))$, and $gg^{-1}=u(r(g))$
\end{enumerate}
for any $k,h,g \in \mathcal{G}_1$ with $d(k)=r(h)$ and
$d(h)=r(g)$. The structural maps in a groupoid $\mathcal{G}$ together
fit into a diagram \cite{MM}
  
\begin{equation*} \xymatrix{ 
 \mathcal{G}_1\times_{\mathcal{G}_0} \mathcal{G}_1 \ar[r]^<<<<<\mu 
  & \mathcal{G}_1 \ar[r]^\iota 
      & \mathcal{G}_1\ar@<2.5pt>[r]^d \ar@<-2.5pt>[r]_r
          & \mathcal{G}_0 \ar[r]^u 
              &\mathcal{G}_1. 
}\end{equation*}

The following definition is taken from \cite{LN}.

\begin{definition}
A \emph{Lie groupoid} is a groupoid 
\begin{equation*} 
  \mathcal{G} = ( \mathcal{G}_0, \mathcal{G}_1,d, r, \mu, u, \iota)
\end{equation*}
such that $M:= \mathcal{G}_0$ and $ \mathcal{G}_1$ are smooth
manifolds (with or without corners), the structural maps $d, r, \mu,
u$, and $\iota$ are smooth, the domain map $d$ is a submersion, and
all the spaces $M$ and $ \mathcal{G}_x=d^{-1}(x)$, $x\in M$, are
Hausdorff.
\end{definition}

We now recall the definition of a Lie algebroid \cite{LN}.

\begin{definition}
A \emph{Lie algebroid} $A$ over a manifold $M$ is a vector bundle $A$
over $M$, together with a Lie algebra structure on the space
$\Gamma(A)$ of the smooth sections of $A$ and a bundle map $\rho:
A\rightarrow TM$, extended to a map between sections of theses
bundles, such that
\begin{enumerate}
\item[(1)] $\rho([X,Y])=[\rho(X),\rho(Y)]$;
\item[(2)] $[X,fY]=f[X,Y]+(\rho(X)f)Y$,
\end{enumerate}
for all smooth sections $X$ and $Y$ of $A$ and any smooth function $f$
on $M$. The map $\rho$ is called the \emph{anchor}. Usually we shall
denote by $(A,\rho)$ such a Lie algebroid.
\end{definition}

Consider a Lie groupoid $\mathcal{G}$ with units $M$. We can associate
a Lie algebroid $A(\cG)$ to $\cG$ as follows \cite{Mac}. The
$d$-vertical subbundle of $T\mathcal{G}$ for $d:\mathcal{G}\rightarrow
M$ is denoted by $T^d(\mathcal{G})$ and called simply the $d$-vertical
bundle for $\mathcal{G}$. It is an involutive distribution on
$\mathcal{G}$ whose leaves are the components of the $d$-fibers of
$\mathcal{G}$. (Here involutive distribution means that
$T^d(\mathcal{G})$ is closed under the Lie bracket, i.e. if $X,Y \in
\mathfrak{X}(\mathcal{G})$ are sections of $T^d(\mathcal{G})$, then
the vector field $[X,Y]$ is also a section $T^d(\mathcal{G})$.) Hence
we obtain
\begin{equation*}
  T^d\mathcal{G}=\text{ker} \ d_*=\displaystyle{\bigcup_{x\in
      M}T\mathcal{G}_x}\subset T\mathcal{G}.
\end{equation*}
The \emph{Lie algebroid} of $\cG$, denoted by $A(\mathcal{G})$, is
defined to be $T^d(\mathcal{G})|_M$, the restriction of the
$d$-vertical tangent bundle to the set of units $M$. In this case, we
say that $\cG$ integrates $A(\cG)$.

Let $f_1, f_2\in \cC^\infty_c(\cG)$ and fix a Haar system of measures
$d\mu_x$ on the $d$-fibers. The \emph{convolution} product of $f_1$
and $f_2$ is defined as
$$f_1*f_2(h):=\int_{d^{-1}(d(h))}f_1(g)f_2(g^{-1}h)d\mu_{d(h)},$$ and
$\cC^\infty_c(\cG)$ becomes a $*$-algebra with $f^*(g):=
\overline{f(g^{-1})}$. Taking the closure with respect to a suitable
norm, given by the $\text{sup}$ over all bounded representations, we
get the \emph{groupoid $C^*$-algebra} $C^*(\cG)$ (see \cite{Ren}). For
our purposes, it is preferable to define $C^*(\cG)$ as an algebra of
operators on $\cG$, as we shall see now (Definition \ref{def.C*} below).

\subsection{Pseudodifferential operators  and groupoid $C^*$-algebras}
We recall here the construction of the space of pseudodifferential
operators associated to a Lie groupoid $\mathcal{G}$ with units $M$
\cite{LMN, LN, Monthubert, Monthubert1, MonthubertPierrot, NWX}. The
dimension of $M$ is $n>0$.

Namely, let $P=(P_x)$, $x\in M$ be a smooth family of
pseudodifferential operators acting on $\mathcal{G}_x$. We say that
$P$ is \emph{right invariant} if $P_{r(g)}U_g=U_gP_{d(g)}$, for all
$g\in \mathcal{G}$, where
\begin{equation*}
  U_g : C^\infty(\mathcal{G}_{d(g)}) \rightarrow
  C^\infty(\mathcal{G}_{r(g)}), \,\ (U_gf)(g')=f(g'g).
\end{equation*}
Let $k_x$ be the distributional kernel of $P_x$, $x\in M$. Note that
the support of the $P$
\begin{equation*}
  \text{supp}(P):= \overline{\bigcup_{x\in M}\text{supp}(k_x)}  \subset \{(g,g'), \ d(g)=d(g')\} \subset \mathcal{G}\times \mathcal{G}
\end{equation*}
  since $\text{supp}(k_x)\subset
  \mathcal{G}_x\times\mathcal{G}_x$. Let $\mu_1(g',g) :=
  g'g^{-1}$. The family $P = (P_x)$ is called \emph{uniformly
    supported} if its \emph{reduced support} $\text{supp}_\mu(P) :=
  \mu_1(\text{supp}(P))$ is a compact subset of $\mathcal{G}$.

\begin{definition}\label{def.C*}
 The space $\Psi^{m}(\cG)$ of \emph{pseudodifferential operators of
   order $m$ on a Lie groupoid} $\mathcal{G}$ with units $M$ consists
 of smooth families of pseudodifferential operators $P=(P_x)$, $x\in
 M$, with $P_x\in \Psi^m(\mathcal{G}_x)$, which are {uniformly
   supported} and {right invariant}.
\end{definition}
We also denote $\Psi^\infty(\mathcal{G}) := \cup_{m\in
  \mathbb{R}}\Psi^m(\mathcal{G})$ and $\Psi^{-\infty}(\mathcal{G}) :=
\cap_{m\in \mathbb{R}}\Psi^m(\mathcal{G})$. We then have a
representation $\pi$ of $\Psi^{\infty}(\cG)$ on $\cC^\infty_c(M)$ (or
on $\cC^\infty(M)$, on $L^2(M)$, or on Sobolev spaces), called
\emph{vector representation} uniquely determined by the equation
\begin{equation}
  (\pi(P)f)\circ r := P(f\circ r),
\end{equation} 
where $f\in \cC^\infty_c(M)$ and $P=(P_x)\in \Psi^m(\cG)$.

An alternative definition of $\Psi^m(\cG)$ is through distribution
kernels (see for instance \cite{ALN, MonthubertPierrot, NWX}). More
precisely, $k_P(g):=k_{d(g)}(g, d(g))$ defines a distribution on
$\cG$, with $\text{supp} k_p= \text{supp}_\mu(P)$ compact, smooth
outside $M$ and given by an oscillatory integral on a neighborhood of
$M$. We say that $k_P\in I^m_c(\cG;M)$ is a conormal distribution to
$M$.  Conversely, we have $P_xf_x(g) = \int_{\cG_x} k_P(gh^{-1})
f_x(h)d\mu_x(h)$, and $\Psi^m(\cG)\cong I^m_c(\cG;M)$. If $P\in
\Psi^{-\infty}(\cG)$, then $P$ identifies with the convolution with a
smooth, compactly supported function and $\Psi^{-\infty}(\cG)$
identifies with the convolution algebra $\cC_c^\infty(\cG)$.  In
particular, we can define
\begin{equation}\label{eq.def.norm1}
  \|P\|_{L^1(\cG)} := \sup\limits_{x\in M} \Big\{ \
  \int_{\cG_x}|k_P(g^{-1})|\, d\mu_x(g),\,\, \int_{\cG_x}|k_P (g)|\,
  d\mu_x(g)\ \Big\}.
\end{equation}

There is an interesting representation, the \emph{ regular
  representation} $\pi_x$, associated to $x$ on $\cC^\infty_c(\cG_x)$,
defined by $\pi_x(P)=P_x$. It is clear that $\|\pi_x(P)\| \leq
\|P\|_{L^1}$. The \emph{reduced $C^*$--norm} of $P$ is defined by
\begin{equation}\label{eq.def.rednorm}
  \|P\|_r = \sup\limits_{x\in M}\|\pi_x(P)\| = \sup\limits_{x\in
    M}\|P_x\|,
\end{equation}
and the \emph{full norm} of $P$ is defined by
\begin{equation}\label{eq.def.fullnorm}
  \|P\| = \sup\limits_{\rho}\|\rho(P)\|,
\end{equation}
where $\rho$ varies over all bounded representations of $\Psi^0(\cG)$
satisfying
\begin{equation*}
  \|\rho(P)\| \leqslant \|P\|_{L^1}\quad \text{for all} \quad P\in
  \Psi^{-\infty}(\cG).
\end{equation*}

\begin{definition}\label{C*G}
Let $\cG$ be a Lie groupoid and $\Psi^\infty(\cG)$ be as above.  We
define $C^*(\mathcal{G})$ (respectively, $C^*_r(\cG)$) to be the
closure of $\Psi^{-\infty}(\mathcal{G})$ in the norm $\|\cdot\|$
(respectively, $\|\cdot\|_r$).  If $\|\cdot\|_r=\|\cdot\|$, that is,
if $C^*(\mathcal{G}) \cong C_r^*(\mathcal{G})$, we call $\cG$
\emph{amenable}.
\end{definition}

Now consider the closure $\overline{\Psi^0(\cG)}$ with respect to
$\|\cdot\|$. Let $A$ denote the algebroid defined by $\cG$, and $S^*A$
denote the sphere bundle of $A^*$. There is a well-defined principal
symbol mapping $\sigma: \overline{\Psi^0(\cG)}\to \cC_0(S^*A)$, which
is a surjective $*$-homomorphism and its kernel coincides with
$C^*(\mathcal{G})$:
\begin{equation}\label{ex.seq.symb}
\begin{CD}
  0 @>>> C^*(\cG) @>>> \overline{\Psi^0(\cG)}
  @>{ \sigma_0}>>
 \cC_0(S^*A)@>>> 0,
\end{CD}
\end{equation}
In particular $\Psi^{-\infty}(\cG)$ is dense in $\Psi^{-1}(\cG)$ and
$\Psi^{-1}(\cG)\subset C^*(\cG)$.  An operator $P\in
\overline{\Psi^0(\cG)}$ is said to be \emph{elliptic} if $\sigma_0(P)$
is invertible in $\cC_0(S^*A)$.

Let  $Y\subset M$ be an \emph{invariant} subset, that is, such
that $d^{-1}(Y)=r^{-1}(Y)$. Then, if $Y$ is a closed submanifold of
$M$, $\cG_Y:=d^{-1}(Y)$ is also a Lie groupoid, with units $Y$ and
there is an exact sequence
\begin{equation} \label{ex.seq.rest}
\xymatrix{ 
0 \ar[r]
  & C^*(\cG_{M\setminus Y})\ar[r]
      &C^*(\cG) \ar[r]
          &C^*(\cG_{Y})\ar[r] 
              &0.
}
\end{equation}
Moreover, there is a restriction map $\cR_Y: \Psi^{m}(\cG)\to
\Psi^{m}(\cG_Y)$.  In this case, Lemma 3 in \cite{LMN} gives that the
following sequence is exact:
\begin{equation}\label{ex.seq.inv}
\begin{CD}
  0 @>>> C^*(\cG_{M\setminus Y}) @>>> \overline{\Psi^0(\cG)}
  @>{(\cR_Y, \sigma)}>>
  \overline{\Psi^0(\cG_Y)}\times_{\cC_0(S^*A_{Y})}\cC_0(S^*A)@>>> 0,
\end{CD}
\end{equation}
where the fibered product $\overline{\Psi^0(\cG_Y)}
\times_{\cC_0(S^*A_{Y})} \cC_0(S^*A)$ is defined as the algebra of
pairs $(Q, f)\in \overline{\Psi^0(\cG_Y)} \times \cC_0(S^*A)$ such
that $\sigma(Q)=f_{\vert{S^*A_{Y}}}$. One possible strategy to prove
Fredholmness for pseudodifferential operators on $\cG$, in particular,
in $C^*(\cG)$, is to look for invariant subsets $Y\subset M$ such that
the $C^*$-algebra of $\cG_Y$ is (isomorphic to) the compact
operators. This is the case in the first example we consider
below. See \cite{LMN, NWX} for details.

\begin{example}[Pair groupoid]\label{expl_pair_grpd}
Let $M$ be a smooth manifold (with or without corners). Let
\begin{equation*}
\cG=M\times M \quad\quad \quad \mathcal{G}_0=M,
\end{equation*}
with structure maps $d(m_1, m_2)=m_2$, $r(m_1, m_2)= m_1$, $(m_1,
m_2)(m_2, m_3)=(m_1, m_3)$, $u(m)=(m,m)$, and $\iota(m_1, m_2)= (m_2,
m_1)$. Then $\cG$ is a Lie groupoid, called \emph{the pair
  groupoid}. We have $A(\cG)= TM$. According to the definition, a
pseudodifferential operator $P$ belongs to $ \Psi^m(\cG)$ if and only
if the family $P=(P_x)_{x\in M}$ is constant. Hence we obtain
$\Psi^m(\cG)= \Psi^m_{\text{comp}}(M)$. Also, an important result is
that $C^*(\mathcal{G}) \cong \mathcal{K}$, the ideal of compact
operators, the isomorphism being given by the vector representation or
by any of the regular representations (together with $\cG_x\cong M$.)
If $M$ has dimension $0$, say, it is a discrete set with $k$ elements,
then $C^*(\cG)\cong M_k(\C)$ and the convolution product becomes
matrix multiplication.
\end{example}

\begin{example}[Transformation (or Action) groupoid]\label{expl_transf_grpd}
Suppose that a Lie group $G$ acts on the smooth manifold $M$ from the
right. The \emph{transformation groupoid} over $M \times \{e\} \cong
M$, denoted by $M \rtimes G$, is the set $ M \times G $ with structure
maps $d(m,g )=(m \cdot g, e)$, $r(m, g)= (m,e)$, $(m, g)(m\cdot g,
h)=(m, gh)$, $u(m,e)=(m,e)$, and $\iota(m, g)= (m\cdot g, g^{-1})$.
For more on the action groupoid, one may see \cite{Mac, MM, Ren}.

Let $\mathfrak{g}$ be the Lie algebra of $G$. Denote by
$\mathfrak{X}(M)$ the space of smooth vector fields on $M$. The action
of $G$ on $M$ induces a Lie algebra homomorphism $\phi: \mathfrak{g}
\rightarrow \mathfrak{X}(M)$, i.e., an action of the Lie algebra
$\mathfrak{g}$ on $M$. The \emph{transformation Lie algebroid} $M
\times \mathfrak{g}$ has anchor map $\rho: M \times \mathfrak{g} \to
TM$ defined by
\begin{equation*}
\rho(m, v)= \phi(v)(m).
\end{equation*}
Any section $v$ of $M \times \mathfrak{g}$ is a map $v: M \rightarrow
\mathfrak{g}$. We define the bracket on sections of $M \times
\mathfrak{g}$ by
\begin{equation*}
  [v, w](m)= [v(m), w(m)]_{\mathfrak{g}}+ (\phi (v(m)) \cdot w)(m) -
  (\phi (w(m)) \cdot v)(m).
\end{equation*}
For more details, see \cite{CannasWeinstein}.  In general, there is no
obvious description of pseudodifferential operators on transformation
groupoid which depends on the action of $G$ on $M$. One case of
interest is $\cG= [0,\infty)\rtimes \R^+$, where $\R^+$ acts by
dilation. In this case, it is known that $C^*(\cG)$ coincides with the
class of Wiener-Hopf operators and we have $C^*(\cG)=
\cC_0([0,\infty))\rtimes \R^+$ \cite{MRen} (see also Example 5.11 and
the proof of Lemma 10.2 in \cite{LN}). Moreover, the anchor map $\rho:
[0,\infty)\times \R^+ \to T[0,\infty)$ is such that $\rho(0,
    \lambda)=0$, for all $\lambda \in \R^+$ and is injective
    otherwise, so we have
\begin{equation*}
\Gamma(A(\cH)) \cong  \{a(x)x\pa_x, a \in C^\infty([0,\infty))\},
\end{equation*}
the vector fields that vanish at $0$, that is, at the boundary.
\end{example}

The following example is of a different nature, in that our starting
point is the Lie algebroid.

\begin{example}[$b$-groupoid]\label{bgrpd}

Let $M$ be a manifold with smooth boundary and let $\cV_b$ denote the
class of vector fields on $M$ that are tangent to the boundary.  The
following construction is due to Melrose and led to the general
concept of $b$-geometry \cite{MelroseAPS}, and later to the more
general definition of Lie manifolds. The associated groupoid was
defined in \cite{Monthubert, NWX}.

According to the Serre-Swan theorem (\cite{Karoubi} see also
\cite{ALN, MelroseScattering, MelroseAPS}), there exists a smooth vector bundle
${^bTM} \rightarrow M$ together with a natural map of vector
bundles 
\begin{equation*}\begin{CD}
{^bTM} @> \rho  >> TM \\
@VV V 		@VV V \\
M	@= M
\end{CD}\end{equation*}
such that $\cV_b = \rho( \Gamma({^bTM})$. We call ${^bTM}$ the
\emph{b-tangent bundle}.  Since the Lie bracket of vector fields
tangent to a submanifold is again tangent to that submanifold, we see
that $\cV_b$ is a Lie algebra and ${^bTM}$ becomes a Lie algebroid.

Let 
$$ 
    \cG_b:= \bigcup\limits_{j} \R^+\times (\pa_j M)^2 
    \quad \bigcup \quad M_0^2,
$$ 
where $M_0^2$ denotes the pair groupoid of $M_0:= int(M)$ and $\pa_j
M$ denote the connected components of $\pa M$. Then $\cG_b$ can be
given the structure of a Lie groupoid with units $M$ and we have that
it integrates ${^bTM}$, that is, $A(\cG_b)= {^bTM}$. The
pseudodifferential calculus obtained is Melrose's $b$-calculus. (In fact,
Melrose's calculus is a little bit larger.) See
\cite{MelroseAPS, Monthubert, Monthubert1, NWX} for details. 
\end{example}

%%%%%%%%%%%%%%%%%%%%%%%%%%%%%%%%%%%%%%%%%%%%%%%%%%%

\smallskip
\vskip 3ex

\section{Domains with Conical Points, Desingularization, and Sobolev spaces}\label{weightedspaces}

We review here the main concepts needed regarding domains with conical
singularities.

\begin{definition}\label{domain}
Let $\Omega \subset \R^n$, $n\geqslant 2$, be an open connected
bounded domain. We say that $\Omega$ is a \emph{domain with conical
  points} if there exists a finite number of points $\{p_1, p_2,
\cdots, p_l\} \subset \partial \Omega$, such that
\begin{enumerate}
\item[(1)] $\partial \Omega \backslash \{p_1, p_2, \cdots, p_l\}$ is smooth;
\item[(2)] for each point $p_i$, there exist a neighborhood $V_{p_i}$
  of ${p_i}$, a possibly disconnected domain $\omega_{p_i} \subset
  S^{n-1}$, $\omega_{p_i} \neq S^{n-1}$, with smooth boundary, and a
  diffeomorphism $\phi_{p_i}: V_{p_i} \rightarrow B^{n}$ such that
$$\phi_{p_i}(\Omega \cap V_{p_i})=\{rx': 0<r< 1, x'\in \omega_{p_i}\}.$$
\end{enumerate}
If, moreover, $\pa\Omega= \pa\overline{\Omega}$, then we say that
$\Omega$ is a \emph{domain with no cracks}. The points $p_i$, $i= 1,
\cdots, l$ are called \emph{conical points} or \emph{vertices}. If
$n=2$, $\Omega$ is said to be a {\em polygonal domain}.
\end{definition}
(We can always assume that $\overline{V_i}\cap
\overline{V_j}=\emptyset$, for $i\neq j$, $i,j\in\{1,2,\cdots, l\}$,
and that $J\phi_k(0)=I_n,$ where $J\phi(0)$ is the Jacobian matrix of
$\phi_i$ at $p_i$.)

\begin{remark}
Let $V=\{p_1, p_2, \cdots, p_l\}$ be the given set of conical points
of a conical domain with no cracks $\Omega$, as above. The set $V$
does not determine the structure of $\Omega$ since we can always
increase it, but the minimum set of conical points is unique and
coincides with the singularities of $\pa \Omega$. These are \emph{true
  conical points} of $\Omega$. The other points in $V$ will be called
\emph{artificial points} and are the ones for which $\omega_{p_i}$ is
diffeomorphic to a hemisphere $S^{n-1}_+$. (See Remark 1 in
\cite{MN}.) Note that in fact for any $x\in \overline{\Omega}$, we can
take a neighborhood $V_x$ of $x$, a domain $\omega_x\subset S^{n-1}$
and a diffeomorphism $\phi_x$ such that $\phi_{p_i}(\Omega \cap
V_{p_i})=\{rx': 0<r< 1, x'\in \omega_{p_i}\}$. Then $x\in \Omega$ if,
and only if, $\omega_x=S^{n-1}$, and $x$ is a smooth boundary point
if, and only if, $\omega_x\cong S_+^{n-1}$. Otherwise, $x$ is a true
conical point. It is often useful in applications to boundary value
problems on $\Omega$ to regard smooth boundary points as (artificial)
vertices, representing for instance a change in boundary conditions.
%so that on faces we always have the same boundary condition
\end{remark}

The condition $\pa\Omega= \pa\overline{\Omega}$ means that no boundary
point of $\Omega$ becomes an interior point of the closure, that is,
all boundary points of $\Omega$ are accessible from the outside
\cite{Fol}. We call $x\in \pa \Omega$ a \emph{crack point} if, using
the notation in the previous remark, $\pa \omega_x \neq \pa
(\overline{\omega_x} )$. In this setting, $x$ is a smooth boundary
point if, and only if, $\omega_x\cong S^{n-1}_+$ \emph{or}
$\omega_x\cong S^{n-1}_+ \sqcup S^{n-1}_+$, in which case $\Omega$
lies on both sides of $\pa \Omega$ close to $x$. We call $x$ a
\emph{smooth crack point}. The remaining crack points are singular
points of the boundary of the conical domain, that is, are true
vertices.  In Section \ref{crack}, we will allow polygonal domains to
have cracks (Definition \ref{def.crack}). Note that in higher dimensions, domains with cracks will have edges, and thus are no longer conical domains (in dimension two, the edges of the crack curves behave like conical points, so our constructions apply). See also \cite{LiMN,
  MN}.  \\

For the remainder of this section, we shall denote by $\Omega$ a
bounded domain with conical points in $\R^n$, with \emph{no
  cracks}. We now define the desingularization $\Sigma(\Omega)$ of
$\Omega$ and $\pa \Omega$, which will play a major role in our
constructions.  We follow the approach in \cite{BMNZ} (Section 4 and Example 2.11), see also \cite{Kon, MelroseAPS}.  Let $\Omega^{(0)}=\{p_1,p_2,\cdots, p_l\}$ be the
set of conical points of $\Omega$ and $\phi_i$, $\omega_i$ be as in
Definition \ref{domain}, for $i=1, \cdots, l$. Let also, for each $i$,
$\psi_i$ denote a smooth function on $\Omega$ such that $0\leq
\psi_i\leq1$, $\psi_i=1$ on $\phi^{-1}(\{rx': 0<r< \varepsilon_i,
x'\in \omega_{p_i}\}$, for some $\epsilon_i<1$, and $\psi_i=0$ outside
$V_i\cap \Omega$. Define a map $\Phi: \Omega \setminus \Omega^{(0)}
\rightarrow \R^{2n}$ by
\begin{equation*}
\Phi(x):= \left(x, \;\sum_i \psi_i(x) |x-p_i|^{-1} x\right).
\end{equation*}
Then $\Sigma(\Omega)$ is defined as the closure in $\R^{2n}$ of the
image of $\Phi$. The desingularization map $\kappa : \Sigma(\Omega)\to
\overline\Omega$ is given by projection on the first $n$
components. Note that $\kappa^{-1}(p_i)=\{p_i\}\times
\overline{\omega_i}$. Note also that if $x\in V_i\cap \Omega$ is
identified with $rx'$, $r\in (0, \varepsilon_i)$, $x'\in \omega_i$,
then $\Phi(x)=(rx', x')$ and we have that $\Phi(V_i'\cap \Omega)\cong
(0, \varepsilon_i)\times \omega_i$, for some $V_i'\subset V_i$, open.
We have then the following isomorphism:
\begin{equation}\label{desing}
   \Sigma(\Omega) \cong \left(\coprod\limits_{p_i \in
     \Omega^{(0)}}[0,\epsilon_{p_i})\times \overline{\omega_{p_i}}
     \right) \bigcup\limits_{\phi_{p_i}, \, p_i\in\Omega^{(0)}}
     \Omega,
\end{equation}
where the two sets are glued by $\phi_i$ along $V_i'\cap \Omega$. So
we see that $\Sigma(\Omega)$ is obtained from $\Omega$ by removing a
(possibly non-connected) neighborhood of the singular points and
replacing (each connected component) it by a cylinder.  The advantage
of using the approach above is that the results in \cite{BMNZ} become
available, in particular, the fact that $\Sigma(\Omega)$ is a Lie
manifold with boundary, that is, a compact Riemannian manifold with
corners with a given Lie algebra of vector fields tangent to the
boundary defining the metric, which enjoys many of the properties of
the smooth case \cite{ALNgeom, ALN}.
      
Note that it follows from (\ref{desing}) that the boundary is given by
\begin{equation}
  \pa \Sigma(\Omega) \cong \left(\coprod\limits_{p_i \in
    \Omega^{(0)}}[0,\epsilon_{p_i})\times \partial\omega_{p_i} \cup
    \{0\}\times \overline{\omega_{p_i} } \right)
    \bigcup\limits_{\varphi_{p_i}, \, p_i\in\Omega^{(0)}} \Omega_0.
\end{equation}
where $\Omega_0$ denotes the smooth part of $\partial\Omega$, that is,
$\Omega_0:=\partial \Omega \backslash \Omega^{(0)}$. There are
different types of hyperfaces of $\pa \Sigma(\Omega)$. Namely, each
face of $\Omega$ yields a hyperface, and to each $p_i$ we have
hyperfaces $\{0\}\times \overline{\eta_{ij} }$ and
$[0,\epsilon_{p_i})\times \chi_{ik}$, where $\eta_{ij}$ and
  $\chi_{ik}$
denote connected components of $\omega_i$ and $\pa \omega_i$,
respectively.  In the terminology of \cite{ALN}, the hyperface
$[0,\infty)\times\overline{\chi_{ik}}$ is {\em not at infinity} since it
  corresponds to an actual face of $\Omega$. The hyperface
  $\{0\}\times \overline{\eta_{ij}}$ is an hyperface {\em at infinity}
  because it corresponds to a singularity of $\Omega$.  We have a
  decomposition
$$ 
   \pa \Sigma(\Omega) = \partial' \Sigma (\Omega) + \partial''
     \Sigma (\Omega) 
$$ 
where $\partial' \Sigma (\Omega)$ denotes the union of hyperfaces
which are not at infinity and $\partial''
\Sigma(\Omega):=\kappa^{-1}(\Omega^{(0)})$ denotes the union of the
hyperfaces at infinity.  We remark that in fact $\partial' \Sigma
(\Omega)$ can be identified with the desingularization of $\pa \Omega$
and $\partial' \Sigma (\Omega)$ is a Lie manifold (without boundary)
\cite{BMNZ}. (Note that our definition of $\partial' \Sigma (\Omega)$
differs from the one in \cite{BMNZ}, in that here we are considering
the closure.)

The space $L^2(\Sigma(\Omega))$ is defined using the volume element of
a compatible metric with the Lie structure at infinity on
$\Sigma(\Omega)$. A compatible metric is $r_{\Omega}^{-2}g_e$, where
$g_e$ is the Euclidean metric and $r_{\Omega}$ is a weight function as
in \cite{BMNZ}, representing the distance to the singular points. Then
the Sobolev spaces $H^m(\Sigma(\Omega))$ are defined using
$L^2(\Sigma(\Omega))$. It happens that these Sobolev spaces are
related to \emph{weighted} Sobolev spaces on $\Omega$ and $\pa
\Omega$.

Let $m\in \mathbb{Z}_{\geqslant 0}$, $\alpha$ be a multiindex, and
$r_{\Omega}$ be a weight function as in \cite{BMNZ}. The $m$-th
Sobolev space on $\Omega$ with weight $r_{\Omega}$ and index $a$ is
defined by
\begin{equation}\label{def.weighted}
  \cK_{a}^m(\Omega)=\{u\in L^2_{\text{loc}}(\Omega), \, \,
  r_{\Omega}^{|\alpha|-a}\partial^\alpha u\in L^2(\Omega), \,\,\,\text{for
    all}\,\,\, |\alpha|\leq m\}.
\end{equation}

The following result can be found in \cite{BMNZ} (Proposition 5.7 and
Definition 5.8)

\begin{proposition} \label{Identification}
Let $\Omega \subset \R^n$ be a domain with conical points and
$\Sigma(\Omega)$ be its desingularization. Let also
$\partial'\Sigma(\Omega)$ be the union of the hyperfaces that are not
at infinity. Then,

\begin{enumerate}
\item[(a)] $ \cK^{m}_{\frac{n}{2}}(\Omega)\cong H^{m}(\Sigma(\Omega),
  g),$ for all $m\in \mathbb{Z}$.
\item[(b)] $ \cK^{m}_{\frac{n-1}{2}}(\partial\Omega)\cong
  H^{m}(\partial'\Sigma(\Omega))$, for all $m\in \mathbb{Z}_{\geqslant
    0}$.
\end{enumerate}
\end{proposition}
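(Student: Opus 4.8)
The plan is to reduce both identifications to results that are, by construction, already available in \cite{BMNZ}: namely that $\Sigma(\Omega)$ is a Lie manifold with boundary and $\partial'\Sigma(\Omega)$ is a Lie manifold (without boundary), together with the standard fact that Sobolev spaces defined by a Lie structure at infinity can be described, in coordinates near the singular strata, by explicit weights. The two parts are parallel, so I would treat (a) in detail and deduce (b) by restriction to the boundary.

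For part (a), the key step is to work in the two pieces of the decomposition \eqref{desing}. Away from the conical points, on the set where $\Sigma(\Omega) \cong \Omega$ with all weight functions $r_\Omega$ bounded above and below, both $\cK^m_{n/2}(\Omega)$ and $H^m(\Sigma(\Omega),g)$ reduce to the ordinary Sobolev space $H^m$ of a relatively compact piece of a smooth manifold, so there is nothing to prove there; one only needs a partition of unity argument to glue. Near a conical point $p_i$, I would use the cylindrical coordinates $(r,x') \in [0,\epsilon_{p_i}) \times \overline{\omega_{p_i}}$ furnished by $\Phi$, in which the compatible metric is $r^{-2}g_e$ and the Lie algebra of vector fields is generated by $r\partial_r$ and the (bounded) vector fields tangent to $\omega_{p_i}$. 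A section $u$ lies in $H^m(\Sigma(\Omega),g)$ iff applying up to $m$ of these generating vector fields keeps $u$ in $L^2$ of the metric volume $r^{-n}\,dr\,dx'$. Unwinding the substitution $r\partial_r$, $\partial_{x'}$ versus the flat derivatives $\partial^\alpha$ in $x \in \Omega$, and comparing the volume element $r^{-n}dr\,dx'$ with $dx = r^{n-1}dr\,dx'$, one sees that each application of $r\partial_r$ or a tangential field corresponds exactly to multiplying a flat derivative by $r^{|\alpha|}$, and the $L^2$-norm shift between the two volume elements contributes precisely the weight $r^{-n/2}$, i.e. the index $a = n/2$. This is the computation that produces the exact exponent in the statement, and it is really the only place where the specific number $n/2$ (respectively $(n-1)/2$ on the boundary, because the boundary volume element is $r^{-(n-1)}$ times the flat boundary element) enters. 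The identification for all $m \in \mathbb{Z}$ rather than just $m \geq 0$ follows by the usual duality/interpolation argument once the nonnegative case is settled, since both scales of spaces are defined consistently for negative $m$ by duality.

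For part (b), I would observe that $\partial'\Sigma(\Omega)$ is itself a Lie manifold (without boundary) by \cite{BMNZ}, obtained by restricting the Lie structure of $\Sigma(\Omega)$ to the hyperfaces not at infinity, and that under $\Phi$ its cylindrical ends near $p_i$ are $[0,\epsilon_{p_i}) \times \partial\omega_{p_i}$ carrying the metric $r^{-2}$ times the flat metric of $\partial\Omega$. Repeating the coordinate computation from part (a) with $n$ replaced by $n-1$ — because the intrinsic dimension of $\partial\Omega$ is $n-1$, so the boundary volume element picks up $r^{-(n-1)}$ rather than $r^{-n}$ — yields the weight index $a = (n-1)/2$ and the claimed identification $\cK^m_{(n-1)/2}(\partial\Omega) \cong H^m(\partial'\Sigma(\Omega))$. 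The restriction to $m \in \mathbb{Z}_{\geq 0}$ here is natural because the weighted Sobolev spaces on $\partial\Omega$ were only defined, in \eqref{def.weighted}-style, for nonnegative $m$, and there is no boundary-of-the-boundary to force a duality extension.

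The main obstacle is bookkeeping rather than conceptual: one must be careful that the weight function $r_\Omega$ used in \eqref{def.weighted} agrees, near each $p_i$ and up to a bounded factor, with the radial variable $r = |x - p_i|$ coming from the diffeomorphisms $\phi_{p_i}$, and that the cutoffs $\psi_i$ do not spoil the equivalence of norms when passing between the two sides of \eqref{desing}. Since changing $r_\Omega$ by a factor bounded above and below (and with bounded logarithmic derivatives) changes the norms only up to equivalence, this is harmless, but it should be stated explicitly. Given this, the proof is essentially a reference to \cite{BMNZ} (Proposition 5.7 and Definition 5.8) together with the local coordinate computation sketched above; I would present it in that form, emphasizing the weight-exponent count as the one substantive point.
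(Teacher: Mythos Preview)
Your proposal is correct and, in fact, more detailed than the paper's own treatment: the paper does not give a proof of this proposition at all but simply cites \cite{BMNZ} (Proposition 5.7 and Definition 5.8) for the result, with the remark that the boundary spaces in (b) are \emph{defined} via the identification in (a). Your local coordinate computation explaining the weight exponents $n/2$ and $(n-1)/2$ is exactly the content behind that citation, so your approach is fully aligned with the paper's, just made explicit.
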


Note that the weighted Sobolev spaces on the boundary $\pa \Omega$ are
defined using the identification given in $(a)$ in the above
proposition. For more details, see \cite{BMNZ}.

{F}rom the previous proposition, it follows that results on regularity
and Fredholmness of operators on $\partial'\Sigma(\Omega)$ translate
into results for boundary operators.

%%%%%%%%%%%%%%%%%%%%%%%%%%%%%%%%%%%%%%%%%%%%%%%%%%%
\smallskip
\vskip 3ex

\section{Lie Algebroids and Lie Groupoids for Straight Cones\label{straightcone}}

Let $\omega \subset S^{n-1}$ be an open subset with smooth
boundary. We allow $\omega$ to be \emph{disconnected}. Denote by
$\Omega := \{t y',\ y' \in \omega,\ t \in (0, \infty)\} = \R^+ \omega$
the cone with base $\omega$. We use here the concepts reviewed in the
previous sections to place the approach in \cite{NQ1} in a setting
easier to generalize to domains with conical points.

We first desingularize our domain with conical points $\Omega$ and its
boundary $\pa \Omega$ as in the previous section. We obtain a Lie
manifold $\Sigma(\Omega)$, which in this case coincides with an
half-infinite solid cylinder
\begin{equation*}
\Sigma(\Omega)=  [0,\infty) \times \overline{\omega}
\end{equation*} 
with boundary $\pa \Sigma(\Omega) = [0,\infty) \times \partial \omega
\cup \{0\}\times \omega$.  We denote by $\partial'
\Sigma(\Omega)=[0,\infty)\times \partial \omega$ the union of the
hyperfaces not at infinity and by $\partial''
\Sigma(\Omega)=\{0\}\times \omega$ the union of the hyperfaces at
infinity (recall that $\omega$, $\pa \omega$ might be disconnected) .
Throughout this section, we let $M := \partial' \Sigma(\Omega) =
[0,\infty)\times \partial\omega$, which we think of as a
  desingularization of $\pa \Omega$.

The Lie algebra of vector fields $\mathcal{V}(\Omega)$ inducing the
Lie structure at infinity in $\Sigma(\Omega)$ is the space of vector
fields on $\Sigma(\Omega)$ that are tangent to $\{0\}\times
\overline{\omega}$, that is, to the boundary at infinity \cite{BMNZ,
  MN}.
There is an induced Lie structure at infinity on $M$, that can be
described as follows. If we denote by $p_1 : M \to [0,\infty)$ and
  $p_2 : M \to \pa \omega$ the two projections and decompose $ TM =
  p_1^* T[0, \infty) \oplus p_2^* T\pa \omega,$ we have that $\cW$
    consists of vector fields on $M = [0, \infty) \times \pa \omega$
      such that
\begin{equation*}
  \cW = \{a(x,y)x \partial_x + Y, \, 
  a \in C^\infty(M),\ Y \in \Gamma(p_2^* T\pa \omega)\, \},
\end{equation*}
that is, that are tangent to $\{0\} \times \pa \omega=\pa M$. Hence,
the Lie algebroid associated to $M$ as a Lie manifold is (isomorphic
to) the $b$-tangent bundle ${^bTM}$ such that $\Gamma({^bTM})=\cW$
(see Example \ref{bgrpd}).

Let us consider now the action of $(0, \infty)$ on $[0, \infty)$ by
  dilation and let
\begin{equation}\label{cH}
\cH := [0,\infty)\rtimes (0,\infty)
\end{equation}
be the transformation (or action) groupoid (Example
\ref{expl_transf_grpd}).  Also, we denote by $X^2 := X \times X$ the
pair groupoid with units some arbitrary space $X$ (Example
\ref{expl_pair_grpd}) and by $\cK$ the $C^*$-algebra of compact
operators on some generic separable Hilbert space. Then $C^*( X^2 )
\simeq \cK $, if $\dim X>0$, and $C^*( X^2 )\simeq M_k(\C)$, if $X$ is
discrete with $k$ elements.

We define the \emph{boundary groupoid associated to a straight cone
  $\Omega=\R^+\omega$} as the product Lie groupoid with units
$M=[0,\infty) \times \pa \omega$, corresponding to a desingularization
  of $\pa \Omega$, as
\begin{equation}\label{cJ} 
\cJ := \cH \times (\partial\omega)^2.
\end{equation}
Noting that $(0,\infty)$ is an invariant subset where $\cH$ coincides
with the pair groupoid, we have that $M_0:=int(M)= (0,\infty)\times
\pa \omega$ and $\pa M=\{0\} \times \pa \omega$ are invariant subsets
of $M$ with respect to $\cJ$ and we have
$$\cJ_{M_0}=\left((0,\infty) \times \pa \omega\right)^2=M_0^2, \quad
\cJ_{\pa M}= (0,\infty) \times (\partial\omega)^2.$$ (Note that $M_0$
is really the smooth part of $\pa \Omega$.)

We see now that that $\cJ$ integrates ${^bTM}$, that is, that the
sections of $A(\cJ)$ are vector fields tangent to $\pa M$:

\begin{lemma}\label{lem.algbd.cJ}
The \text{Lie algebroid} of $\cJ $ is isomorphic to ${^bTM}$, the
$b$-tangent bundle.
\end{lemma}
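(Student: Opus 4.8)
The plan is to compute the Lie algebroid $A(\cJ)$ directly from the product structure $\cJ = \cH \times (\pa\omega)^2$ and then exhibit an explicit bundle isomorphism onto ${}^b TM$. Since the Lie algebroid functor sends products of Lie groupoids to products of Lie algebroids, we have $A(\cJ) \cong A(\cH) \boxtimes A((\pa\omega)^2)$ over $M = [0,\infty) \times \pa\omega$; more precisely $A(\cJ) = p_1^* A(\cH) \oplus p_2^* A((\pa\omega)^2)$ where $p_1, p_2$ are the two projections from $M$. By Example \ref{expl_pair_grpd}, $A((\pa\omega)^2) = T\pa\omega$, and by the discussion of $\cH = [0,\infty) \rtimes (0,\infty)$ in Example \ref{expl_transf_grpd}, the algebroid $A(\cH)$ is the trivial rank-one bundle over $[0,\infty)$ whose anchor sends the canonical generating section to the vector field $x\partial_x$ (the transformation algebroid $[0,\infty) \times \mathfrak{g}$ for $\mathfrak{g} = \operatorname{Lie}((0,\infty)) \cong \R$, with anchor $\rho(x,1) = x\partial_x$). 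Hence $\Gamma(A(\cH)) \cong \{a(x)\, x\partial_x : a \in C^\infty([0,\infty))\}$ as already recorded in Example \ref{expl_transf_grpd}.

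Next I would assemble these two pieces. Decomposing $TM = p_1^* T[0,\infty) \oplus p_2^* T\pa\omega$ as in the text, the anchor of $A(\cJ)$ is the direct sum of the anchors, so it maps $\Gamma(A(\cJ))$ onto exactly those vector fields of the form $a(x,y)\, x\partial_x + Y$ with $a \in C^\infty(M)$ and $Y \in \Gamma(p_2^* T\pa\omega)$ — that is, onto $\cW$. On the other hand, by the description of ${}^bTM$ in Example \ref{bgrpd} (via Serre--Swan applied to the $C^\infty(M)$-module $\cV_b = \cW$ of vector fields tangent to $\pa M = \{0\}\times\pa\omega$), we have $\Gamma({}^bTM) = \cW$ as well, with its anchor the inclusion $\cW \hookrightarrow \Gamma(TM)$. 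So the natural map $A(\cJ) \to {}^bTM$ is the one induced on Serre--Swan bundles by the isomorphism of $C^\infty(M)$-modules $\Gamma(A(\cJ)) \xrightarrow{\sim} \cW = \Gamma({}^bTM)$; since both are finitely generated projective modules and the map respects the module structure, it is induced by a vector bundle isomorphism, and by construction it intertwines the two anchor maps. One should also check that it is a Lie algebra morphism for the brackets, but this is automatic: the anchor of ${}^bTM$ is injective on sections (the generic point of $M$ has $x \neq 0$ where $x\partial_x$ is a genuine nonvanishing field), so the bracket on $\Gamma({}^bTM)$ is the restriction of the ambient bracket on $\Gamma(TM)$, and likewise on the image side; hence matching anchors forces matching brackets.

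For the proof I would present it in this order: (i) recall $A(\cG_1 \times \cG_2) = A(\cG_1)\boxtimes A(\cG_2)$ and apply it to get $A(\cJ) = p_1^*A(\cH) \oplus p_2^* T\pa\omega$; (ii) identify $A(\cH)$ as the transformation algebroid with anchor $x\partial_x$, citing Example \ref{expl_transf_grpd}; (iii) compute the anchor $\rho_\cJ$ of $A(\cJ)$ and observe $\rho_\cJ(\Gamma(A(\cJ))) = \cW$; (iv) invoke Example \ref{bgrpd} to get $\Gamma({}^bTM) = \cW$ with the inclusion anchor; (v) conclude that $A(\cJ)$ and ${}^bTM$ are the Serre--Swan bundles of the same projective $C^\infty(M)$-module with matching anchors, hence isomorphic as Lie algebroids. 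I do not expect a serious obstacle — the only thing requiring a little care is being precise about why an isomorphism of the modules of sections that intertwines anchors upgrades to a Lie algebroid isomorphism (the injectivity-of-anchor-on-sections argument above handles the bracket), and about the compatibility of the product-algebroid identification with the chosen splitting of $TM$. Everything else is bookkeeping against the examples already set up in Section \ref{groupoid}.
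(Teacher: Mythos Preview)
Your proof is correct and follows essentially the same approach as the paper: compute $A(\cJ)$ via the product decomposition $A(\cH)\times A((\pa\omega)^2)$, identify each factor using Examples \ref{expl_transf_grpd} and \ref{expl_pair_grpd}, and conclude via Serre--Swan that $\Gamma(A(\cJ))\cong\cW=\Gamma({}^bTM)$. You add extra care about why the module isomorphism upgrades to a Lie algebroid isomorphism (anchor injectivity forcing bracket compatibility), which the paper leaves implicit, but the core argument is the same.
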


\begin{proof}
We use the general fact that the Lie algebroid of the product groupoid
$\mathcal{G}_1\times \mathcal{G}_2$ is the product of their Lie
algebroids, i.e. $A(\mathcal{G}_1\times \mathcal{G}_2) =
A(\mathcal{G}_1)\times A(\mathcal{G}_2)$ (see Example 2.5 in
\cite{LN}).  We have that (see Example \ref{expl_transf_grpd})
\begin{equation*}
\Gamma(A(\cH)) \cong \cV:= \{a(x)x\pa_x, a \in C^\infty([0,\infty))\},
\end{equation*}
that is, the Lie algebroid of $\cH$ is isomorphic to the one coming
from $\cV$ through the Serre-Swan theorem. Since
$A((\pa\omega)^2)=T(\pa \omega)$, we get
\begin{equation*}
\Gamma(A(\cJ))=\Gamma(A(\cH) \times T(\pa\omega)) \cong \cW=\Gamma({^bTM})
\end{equation*}
and  the conclusion follows from the Serre-Swan theorem.
\end{proof}

 \begin{remark}\label{rmk.bgrpd}
Recall the definition of $b$-groupoid in Example \ref{bgrpd}, which,
in the case of $M=[0,\infty) \times \pa \omega$ comes down to
$$
  {^b\cG}= M_0^2 \; \bigcup_j \;\R^+\times (\pa_j \omega)^2,
$$ 
where $\pa_j \omega$ denote the connected components of $\pa \omega$.
If $\pa \omega$ is connected, then $\cJ={^b\cG}$. In the more
interesting case of $\pa \omega$ being disconnected, the groupoid
$\cJ$ is larger and not $d$-connected. (Note that if $n=2$, that is,
if we have a polygonal domain, then $\pa \omega$ is \emph{always}
disconnected.) The main difference is that here we allow the different
connected components of the boundary, corresponding to faces in the
desingularization, to interact, in that there are arrows between
them. In fact, we have that $\R^+\times (\pa_j \omega)^2$ is a
connected component of $\cJ_{\pa M}=\R^+\times (\pa \omega)^2$, and we
have $\R^+\times (\pa_j \omega)^2=\cJ_{\pa_j \omega}^{\pa_j \omega}$,
the subgroupoid over $\pa_j\omega$ defined as $d^{-1}({\pa_j
  \omega})\cap r^{-1}{\pa_j \omega}$. Hence ${^b\cG}$ is an open, wide
subgroupoid of $\cJ$.
\end{remark}

As for $C^*$-algebras, it is known that $C^*(\cH)=\cC_0([0,\infty))\rtimes \R^+$
  \cite{MRen}. We have then the following:

\begin{lemma}\label{lem.C*.cJ} 
Let $\cJ = \cH \times (\pa \omega)^2$. Then $C^*(\cJ)\cong
\cC_0([0,\infty))\rtimes \R^+\otimes \cK$, if $n\geq 3$, and $C^*(\cJ)\cong \cC_0([0,\infty))\rtimes \R^+\otimes
M_k(\C)=M_k(C^*(\cH))$, if $n=2$, where $k$ is the number of elements
of $\pa \omega$.
\end{lemma}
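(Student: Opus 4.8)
The plan is to compute $C^*(\cJ)$ directly from the product structure $\cJ = \cH \times (\pa\omega)^2$, using the general principle that the $C^*$-algebra of a product Lie groupoid is the (spatial) tensor product of the factor $C^*$-algebras, together with the known identifications $C^*(\cH) \cong \cC_0([0,\infty))\rtimes \R^+$ (from \cite{MRen}, as recalled in Example \ref{expl_transf_grpd}) and $C^*((\pa\omega)^2)\cong \cK$ when $\dim \pa\omega > 0$, respectively $C^*((\pa\omega)^2)\cong M_k(\C)$ when $\pa\omega$ is discrete with $k$ points (Example \ref{expl_pair_grpd}). Since $\pa\omega \subset S^{n-1}$ has dimension $n-2$, the dichotomy between $n\ge 3$ and $n=2$ is exactly the dichotomy between $\dim\pa\omega>0$ and $\pa\omega$ being a finite set, so matching the two cases of the statement is automatic once the tensor product identity is in place.

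First I would establish the tensor product formula $C^*(\cG_1\times\cG_2)\cong C^*(\cG_1)\otimes C^*(\cG_2)$. On the level of the $*$-algebra $\Psi^{-\infty}$, one has $\cC_c^\infty(\cG_1\times\cG_2) = \cC_c^\infty(\cG_1)\odot \cC_c^\infty(\cG_2)$ (algebraic tensor product, dense in each completion), and the convolution product on the product groupoid — with respect to the product Haar system — factors as the tensor product of the convolution products, because $d^{-1}(d(h_1,h_2)) = \cG_{1,d(h_1)}\times\cG_{2,d(h_2)}$ and the defining integral splits. Likewise the involution factors. So $\cC_c^\infty(\cG_1\times\cG_2)$ is the algebraic tensor product of $*$-algebras, and one needs to check that the full $C^*$-norm on the product is the maximal (or, here, the unique) $C^*$-tensor norm. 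For $\cG_2 = (\pa\omega)^2$ this is clean: $C^*((\pa\omega)^2)$ is $\cK$ (or $M_k(\C)$), which is nuclear, so there is a unique $C^*$-norm on the algebraic tensor product and $C^*(\cG_1)\otimes\cK$ is unambiguous. Concretely, it suffices to note that the regular representations $\pi_{(x,y)}$ of $\cJ$ on $\cC_c^\infty(\cH_x \times (\pa\omega)^2_y) = \cC_c^\infty(\cH_x)\otimes \cC_c^\infty(\pa\omega)$ are exactly $\pi_x\otimes \pi_y$, and amenability of $\cH$ (Example \ref{expl_transf_grpd}; it is an action groupoid of the amenable group $\R^+$) together with amenability of the pair groupoid gives amenability of $\cJ$, so the full and reduced norms coincide and equal the (spatial = maximal, by nuclearity) tensor norm.

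Then I would simply substitute: $C^*(\cJ)\cong C^*(\cH)\otimes C^*((\pa\omega)^2)\cong \big(\cC_0([0,\infty))\rtimes\R^+\big)\otimes\cK$ if $n\ge 3$, and $\cong \big(\cC_0([0,\infty))\rtimes\R^+\big)\otimes M_k(\C) = M_k(C^*(\cH))$ if $n=2$, where $k = \#(\pa\omega)$; note that when $n=2$, $\omega$ is a disjoint union of arcs of $S^1$, so $\pa\omega$ is a finite set and $k$ is its cardinality (an even number, in fact, but that is not needed). This is the content of the statement.

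The main obstacle is the tensor product identification $C^*(\cG_1\times\cG_2)\cong C^*(\cG_1)\otimes C^*(\cG_2)$ at the level of the \emph{full} groupoid $C^*$-algebras, since in general $C^*$ of a product groupoid is only the maximal tensor product and one must be careful about which completion is meant. Here this is resolved by the nuclearity of the second factor ($\cK$ or $M_k(\C)$), which collapses the distinction between spatial and maximal tensor products and makes the notation $\otimes\cK$ unambiguous; equivalently, by the amenability of $\cJ$, which collapses full and reduced norms and lets one work with the concretely-defined regular representations $\pi_x\otimes\pi_y$. A secondary point to handle carefully is the disconnectedness of $\pa\omega$ — the pair groupoid $(\pa\omega)^2$ has arrows between distinct components, so when $\dim\pa\omega>0$ its $C^*$-algebra is still a single copy of $\cK$ (not a direct sum), consistently with the statement; I would flag this to contrast with the $b$-groupoid picture of Remark \ref{rmk.bgrpd}, where there are no such cross-arrows and one would instead get $\bigoplus_j C^*(\cH)\otimes\cK$.
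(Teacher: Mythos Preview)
Your proposal is correct and follows essentially the same approach as the paper: identify $C^*((\pa\omega)^2)$ with $\cK$ or $M_k(\C)$ according to dimension, and use the tensor-product formula $C^*(\cG_1\times\cG_2)\cong C^*(\cG_1)\otimes C^*(\cG_2)$ to conclude. The only difference is that the paper simply cites this tensor-product identity (Proposition~4.5 in \cite{LN}) whereas you sketch its justification via amenability and nuclearity; your extra care about the tensor norm is sound but not strictly needed once one has that reference.
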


\begin{proof}
The $C^*$-algebra of the pair groupoid $(\partial\omega)^2$ is isomorphic to $\cK$, if $n\geq
3$, and to $M_k(\C)$, if $n=2$, where $k$ is the number of elements of
$\pa \omega$. Since (see Proposition 4.5 in \cite{LN})
\begin{equation*}
  C^*(\mathcal{J})\cong C^*(\cH)\otimes
  C^*(\partial\omega\times \partial\omega),
\end{equation*}
the result follows.
\end{proof}
 
The $C^*$-algebra associated to $\cH$ is the algebra of Wiener-Hopf
operators on $\R^+$, and its unitalization is the algebra of Toeplitz
operators  \cite{MRen}. In particular, we see that the groupoid $C^*$-algebra of a straight cone is stably isomorphic to the $C^*$-algebra of Wiener-Hopf operators. Let us recall the Toeplitz exact sequence
\begin{equation*} \xymatrix{ 
0 \ar[r]
  & \cK(L^2(\mathbb{R}^+))\ar[r]
      &C^*(\cH) \ar[r]
          &\cC_0((0,\infty)) \ar[r] 
              &0.
}
\end{equation*}
Tensoring with $\cK$, we obtain another short exact sequence, for
$n\geq 3$,
\begin{equation*} \xymatrix{ 
0 \ar[r]
  & \cK\ar[r]
      &C^*(\cJ) \ar[r]
          &\cC_0((0,\infty)) \otimes \cK\ar[r] 
              &0
}
\end{equation*}
and similarly for $n=2$. In fact, the sequence above can be obtained
from (\ref{ex.seq.rest}), for the invariant, closed subset $\pa M$
(see Proposition \ref{Groupoid} below).

As for pseudodifferential operators, note that if $P\in \Psi^m(\cJ)$,
then $P_{M_0}\in \Psi^m(M_0^2)$, so that it identifies, by invariance,
with an operator in $ \Psi^m(M_0)=\Psi^m(\R^+\times \pa \omega)$. At
the boundary, $P_{\pa M}\in \Psi^m(\R^+\times (\pa \omega)^2)$ is
defined by a distribution kernel $k_P$ in $\R^+\times (\pa
\omega)^2)$. If $P_{\pa M}\in \Psi^{-\infty}(\R^+\times (\pa
\omega)^2)$, that is, if $k_p$ is smooth, then it defines a smoothing
\emph{Mellin convolution operator} on $\R^+\times \pa \omega$ (see
\cite{LP,NQ1}). This is in fact one of the reasons behind our
definition of $\cJ$.

Moreover, it follows from Remark \ref{rmk.bgrpd} that
$\Psi^m(\cJ)\supset \Psi({^b\cG})$, Melrose's $b$-calculus. If we let
$P=(P_x)\in \Psi^m(\cJ)$, and, for $x\in \pa M$, write $P_x\in
\Psi^m(\R^+\times \pa \omega)$ as matrix of operators $P_x^{ij}:
\cC_c^\infty(\R^+\times \pa_j \omega) \to \cC_c^\infty(\R^+\times
\pa_i\omega) $, then the $b$-pseudodifferential operators at the
boundary correspond to the diagonal entries.  We call
$\Psi^\infty(\cJ_{\pa M})$ the \emph{indicial algebra}, in analogy
with the usual indicial algebra for the $b$-calculus (which coincides
with $\Psi^\infty({^b\cG}_{\pa M})$)\cite{MelroseAPS, MelroseN,
  Monthubert}.

In the following section, we generalize these constructions to the setting of domains with conical points.

%%%%%%%%%%%%%%%%%%%%%%%%%%%%%%%%%%%%%%%%%%%%%%%%%%%

\smallskip
\vskip 3ex

\section{Boundary Groupoids for Domains with Conical Points\label{GpdConstruction}}
\vskip 2.5ex
\subsection{Groupoid construction for domains with no cracks}\label{highdimension}

We consider now a bounded domain $\Omega$ with conical points as in Definition \ref{domain}. Our goal gere is to define a groupoid over the desingularization of $\pa \Omega$, generalizing Equation (\ref{cJ}). For now, $\Omega$ is not allowed to have cracks, but we will drop that assumption in the following subsection.

Let $\Omega^{(0)}=\{p_1,p_2,\cdots, p_l\}$ be the set of conical points of
$\Omega$ (for simplicity, we assume that all the $p_i$'s are true conical points). Denote by $\Omega_0$ the smooth part of $\partial\Omega$, that is,
$\Omega_0=\partial \Omega \backslash \Omega^{(0)}$. Suppose that $p_i\in \Omega^{(0)}$ is
a conical point. Then there exist a neighborhood $V_i \subset
\mathbb{R}^n$ of $p_i$ and a diffeomorphism $\phi_i: V_i \rightarrow
B(0,1)\subset \mathbb{R}^n$, such that, up to a local change of
coordinates, $
  J\phi_k(0)=I_n,
$
where $J\phi(0)$ is the Jacobian matrix of $\phi_i$ at $p_i$, and
\begin{equation*}
  \phi_i (V_i\cap \Omega) = B(0,1)\cap \mathbb{R}_+ \omega_i,
\end{equation*}
for some smooth domain $\omega_i\subset S^{n-1}$. Furthermore, we can
assume that $\overline{V_i}\cap \overline{V_j}=\emptyset$, for $i\neq j$, $i,j\in\{1,2,\cdots, l\}$.
As before, we denote by  $\Sigma(\Omega)$ the manifold with corners obtained from the desingularization of $\Omega$, as in Section \ref{weightedspaces} and by $M:=\pa'\Sigma(\Omega)$ the union of the hyperfaces not at infinity, which we can identify with the desingularization of $\pa \Omega$.

We now construct a groupoid from $\Omega$ and $\partial\Omega$.  {F}rom
the discussion in Section \ref{straightcone}, for each conical point
$p_i \in \partial\Omega$, we can construct groupoid $\cJ_{p_i}=\cH
\times (\partial\omega_{p_i})^2$, where $\cH = [0,\infty)\rtimes
  (0,\infty)$. Since the transformation groupoid $(0,\infty)\rtimes
  (0,\infty)$ is isomorphic to the pair groupoid $(0,\infty)\times
  (0,\infty)$ and $[0,\infty)\times (0,\infty)$ is diffeomorphic to
    $[0,\epsilon_{p_i})\times(0,\epsilon_{p_i})$, for any
      $0<\epsilon_{p_i}<1$, we can impose a groupoid structure on
      $\cJ'_{p_i}:=([0,\epsilon_{p_i})\times(0,\epsilon_{p_i}))\times
        (\partial\omega_{p_i}\times\partial\omega_{p_i})$ such that
\begin{enumerate}
\item[(a)] the unit space $M_{p_i}'$ of $\cJ'_{p_i}$ is
  $[0,\epsilon_{p_i})\times \partial\omega_{p_i}$,
\item[(b)] $[0,\epsilon_{p_i}) \times (0,\epsilon_{p_i})$ has the same
  transformation groupoid structure as that of
  $[0,\infty)\rtimes(0,\infty)$;
\item[(c)] the interior of $\cJ'_{p_i}$ is isomorphic to the pair
  groupoid of $(0,\epsilon_{p_i})\times\partial\omega_{p_i}$.
\end{enumerate}

Let $\Omega_0$ be the smooth part of $\partial\Omega$ as above and
$\Omega_0^2$ be the pair groupoid. For each $p_i \in \Omega^{(0)}$,
there is a map $(0,\epsilon_{p_i})\times\partial\omega_{p_i} \to
\Omega_0$, $(t,x)\mapsto \phi_i^{-1}(xt)$, and it follows from (c)
that we can define $\varphi_{p_i}: int \left(\cJ'_{p_i}\right)
\rightarrow \Omega_0^2$ by
\begin{equation*}
  \varphi_{p_i}(t,s, \omega_1,\omega_2) = (\phi_i^{-1}(t\omega_1), \phi_i^{-1}(ts\omega_2)),
\end{equation*}
where $t,s\in (0,\epsilon_i)$ and $\omega_1,\omega_2 \in
\partial\omega_{p_i}$.  It is clear that $\varphi_{p_i}$ is smooth, a
diffeomorphism into its image, and preserves the groupoid structure of
$\cJ'_p$ and $\Omega_0^2$. We can glue $\cJ'_{p_i}$ and $\Omega_0^2$
using the function $\varphi_{p_i}$, and we define the \emph{boundary
  groupoid associated to a conical domain $\Omega$} (with no cracks)
as
\begin{equation}\label{grpd.nocrack}
  \cG:=\left(\coprod\limits_{p_i \in \Omega^{(0)}}\cJ'_{p_i} \right)\quad
  \bigcup\limits_{\varphi} \quad
 \Omega_0^2,
\end{equation}
where $\varphi=(\varphi_{p_i})_{p_I\in \Omega^{(0)}}$. Then $\cG$ is a
Lie groupoid, with space of units
\begin{eqnarray}\label{units.nocrack}
  M &=& \left(\coprod\limits_{p_i \in
    \Omega^{(0)}}[0,\epsilon_{p_i})\times \partial\omega_{p_i} \right)\quad
    \bigcup\limits_{\varphi} \quad \Omega_0 \quad  \cong \quad\partial' \Sigma (\Omega),
\end{eqnarray}
where $\partial' \Sigma (\Omega)$ denotes the union of hyperfaces
which are not at infinity of a desingularization, as in Section
\ref{weightedspaces}. Clearly, the space $M$ of units is
compact. Denoting by $M_0$ the interior of $M$, we have
$M_0=\Omega_0$, so $\Omega_0$ is an open dense subset of $M$.

(We often replace $\cJ'_{p_i}$ by $\cJ_{p_i}$ in the definition of
$\cG$, where the gluing is always meant as above.)

\begin{definition}\label{def.layerC*}
The \emph{layer potentials $C^*$-algebra} associated to a conical domain $\Omega$ is defined as the groupoid $C^*$-algebra $C^*(\cG)$, where $\cG$ is the boundary groupoid as in (\ref{grpd.nocrack}).
\end{definition}

In the following results, we give a few properties of the boundary groupoid
$\cG$ and of the layer potentials $C^*$-algebra.

\begin{proposition}\label{Groupoid}
Let $\cG$ be the groupoid (\ref{grpd.nocrack}) associated to a domain
with conical points $\Omega\subset \R^n$. Let
$\Omega^{(0)}=\{p_1,p_2,\cdots, p_l\}$ be the set of conical points
and $\Omega_0=\partial \Omega \backslash \Omega^{(0)}$ be the smooth
part of $\partial\Omega$.  Then, $\cG$ is a Lie groupoid with units
$M= \partial' \Sigma (\Omega)$ such that
\begin{enumerate}

\item   $M_0=int(M) = \Omega_0$ is an invariant subset and
$$\cG_{M_0} \cong M_0 \times M_0.$$

\item For each conical point $p \in \Omega^{(0)}$, $\{ p \} \times \pa
\omega_p$ and $\pa  M= \bigcup\limits_{p \in \Omega^{(0)}} \{ p \} \times \pa
\omega_p$ are invariant subsets and
\begin{equation*}
\cG|_{\pa M}= \coprod\limits_{i=1}^{l} (\pa\omega_i \times \pa\omega_i) \times (\R^+ \times \{p_i\})
\end{equation*} 

\item $A(\cG)\cong {^bTM}$, the $b$-tangent bundle of $M$.
\end{enumerate}
\end{proposition}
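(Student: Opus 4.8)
The plan is to verify each of the three claims by unwinding the explicit gluing description in (\ref{grpd.nocrack}) and reducing to the local models already analyzed in Section \ref{straightcone}. The global object $\cG$ is glued from the pieces $\cJ'_{p_i}$ (equivalently $\cJ_{p_i} = \cH \times (\pa\omega_{p_i})^2$) and the pair groupoid $\Omega_0^2$, and the maps $\varphi_{p_i}$ are, by construction, diffeomorphisms onto their images that preserve the groupoid structure; so the smooth groupoid structure on $\cG$ is well-defined, with units $M$ as in (\ref{units.nocrack}), which is identified with $\pa'\Sigma(\Omega)$. That $M$ is compact and $M_0 = \Omega_0$ is dense and open has already been noted. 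The substantive content is then the identification of the restrictions to the invariant subsets and the computation of the Lie algebroid.

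For part (1), I would first check that $\Omega_0 = M_0$ is invariant: a point of $M \setminus \Omega_0$ lies on some $\{0\}\times\pa\omega_{p_i}$, and since in each local piece $\cJ'_{p_i}$ the subset $\{0\}\times\pa\omega_{p_i}$ is invariant (this was established for $\cJ = \cH\times(\pa\omega)^2$ in Section \ref{straightcone}, using that $\{0\}$ is invariant for $[0,\infty)\rtimes(0,\infty)$), the complement $\Omega_0$ is invariant as well. Then $\cG_{M_0} = d^{-1}(\Omega_0)$; since $\Omega_0$ meets each local piece only in the interior $(0,\epsilon_{p_i})\times\pa\omega_{p_i}$, and on interiors all the local groupoids are pair groupoids glued compatibly via $\varphi_{p_i}$ (condition (c) plus the fact that $\varphi_{p_i}$ is a groupoid isomorphism onto an open subgroupoid of $\Omega_0^2$), the union is exactly $\Omega_0^2 = M_0\times M_0$. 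The key point is just that a union of pair groupoids over open pieces of a connected manifold, glued along pair subgroupoids, is the pair groupoid of the union.

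For part (2), fix a conical point $p_i$. The subset $\{p_i\}\times\pa\omega_{p_i}$ corresponds to $\{0\}\times\pa\omega_{p_i}$ inside $M'_{p_i}$, which we just argued is invariant in the local piece and hence in $\cG$; their disjoint union over $i$ is $\pa M$, also invariant. Then $\cG|_{\pa M} = d^{-1}(\pa M)$ lives entirely in $\coprod_i \cJ'_{p_i}$ (no arrows reach into $\Omega_0^2$ since $\varphi_{p_i}$ is only defined on interiors), so it suffices to restrict each $\cJ_{p_i} = \cH\times(\pa\omega_{p_i})^2$ to its boundary units. This was computed in Section \ref{straightcone}: $\cH|_{\{0\}} = (0,\infty)\times\{0\}$ (the isotropy group $\R^+$ at the fixed point of the dilation action), so $\cJ_{p_i}|_{\pa M} = (0,\infty)\times(\pa\omega_{p_i})^2$, which rewritten gives the stated $(\pa\omega_i\times\pa\omega_i)\times(\R^+\times\{p_i\})$. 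Taking the disjoint union over $i=1,\dots,l$ yields the formula.

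For part (3), I would argue exactly as in Lemma \ref{lem.algbd.cJ}: the Lie algebroid is a local object, so $A(\cG)$ is determined by its restriction to each piece. On $\Omega_0^2$ we get $T\Omega_0$, and near each $p_i$ the piece $\cJ'_{p_i} \cong \cJ_{p_i} = \cH\times(\pa\omega_{p_i})^2$ has algebroid ${}^bT M'_{p_i}$ by Lemma \ref{lem.algbd.cJ}, with sections the vector fields $a(x,y)x\pa_x + Y$ tangent to $\{0\}\times\pa\omega_{p_i}$. These local descriptions are compatible under the gluing maps $\varphi_{p_i}$ (which are groupoid isomorphisms, hence induce algebroid isomorphisms), so globally $A(\cG)$ is the bundle whose sections are the vector fields on $M = \pa'\Sigma(\Omega)$ tangent to $\pa M$ — precisely the Lie structure on $\pa'\Sigma(\Omega)$ as a Lie manifold recalled in Section \ref{weightedspaces} — and by the Serre--Swan theorem this bundle is ${}^bTM$. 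The main obstacle, and the place where care is needed, is checking that the gluing along the $\varphi_{p_i}$ genuinely produces a \emph{smooth} groupoid (so that "$\cG$ is a Lie groupoid" is justified) and that the $d$-fibers are Hausdorff; this amounts to verifying that the transition from the dilation/pair-groupoid coordinates on $\cJ'_{p_i}$ to the Euclidean pair-groupoid coordinates on $\Omega_0^2$ is a diffeomorphism up to the boundary, which is where the normalization $J\phi_i(0) = I_n$ and the precise form of $\varphi_{p_i}(t,s,\omega_1,\omega_2) = (\phi_i^{-1}(t\omega_1),\phi_i^{-1}(ts\omega_2))$ are used. Once smoothness and the submersion property of $d$ are in hand — and these follow the model of the straight-cone case in Section \ref{straightcone} together with the general desingularization results of \cite{BMNZ} — the three itemized statements are immediate from the local computations above.
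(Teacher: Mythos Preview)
Your proof is correct and follows essentially the same approach as the paper: the paper's argument for (1) and (2) is a one-line computation of the $d$-fibers $\cG_m$ (which you unpack more explicitly by checking invariance via the local pieces), and for (3) both you and the paper invoke Lemma \ref{lem.algbd.cJ} on each $\cJ'_{p_i}$ and glue/clutch the resulting algebroids over $\Omega_0$ to obtain ${}^bTM$. Your version is more detailed---in particular your remarks on smoothness of the glued groupoid go beyond what the paper's proof spells out---but there is no substantive difference in strategy.
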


\begin{proof}
To show (1) and (2) note that if $m \in M_0$, then $\cG_m = \partial
\Omega \backslash \Omega^{(0)} = \Omega_0$ and that if $x \in \partial
M$, then $\cG_x \cong (0,\infty) \times \pa \omega_i$ for some $i \in
\{1,\cdots, l\}$.

To compute the Lie algebroid of $\cG$, we see that if $\cG=
\cG_1\cup_\phi \cG_2$, for Lie groupoids $\cG_i$, $i=1,2$, and $\phi$
a diffeomorphism of an open set in $\cG_1$ to an open set in $\cG_2$
preserving the groupoid structure, then we can do a clutching
construction on vector bundles to get that $A(\cG)\cong A(\cG_1)
\cup_\phi A(\cG_2)$. In our case, since $A( \Omega_0^2)=T(\Omega_0)$
and $A({\cJ}'_i)= {^b T}([0,\infty)\times \pa \omega_i)$, according to
  Lemma \ref{lem.algbd.cJ}, we have that
$$ A(\cG)= \left(\bigoplus \limits_{i=1}^{l} {^b T([0,\infty)\times \pa \omega_i)}\right)\quad
  \bigcup\limits_{\varphi} \quad
T\Omega_0,$$
and therefore $\Gamma(A(\cG)) $ coincides with the vector fields on $M$ that are tangent to $\{0\}\times \pa \omega_i$, $i=1, \cdots, l$, that is, tangent to $\pa M$. Hence, $A(\cG)={^bTM}$.
\end{proof}

Recall that $C^*(\cG)$ was defined as the closure (in the full norm)
of the class of order $-\infty$ pseudodifferential operators on $\cG$
(Definition \ref{C*G}).

\begin{proposition}\label{C*alg}
Let $\cG$ be the boundary groupoid (\ref{grpd.nocrack}) associated to
a domain with conical points $\Omega\subset \R^n$ as before. Then,

\begin{enumerate}
\item $\cG$ is amenable, i.e., $C^*(\cG)\cong C^*_r(\cG)$.

\item We have the following exact sequences
\begin{equation*} \xymatrix{ 
0 \ar[r]
  & \cK\ar[r]
      &C^*(\cG) \ar[r]
          &\bigoplus\limits_{i=1}^{l} \cC_0(\R^+) \otimes \cK\ar[r] 
              &0,
}\text{  if  } n\geq 3,
\end{equation*}
 and, if $n=2$, 
\begin{equation*}
\xymatrix{ 
0 \ar[r]
  & \mathcal{K}\ar[r]
      &C^*(\cG) \ar[r]
          &\bigoplus\limits_{i=1}^{l} M_{k_i}(\cC_0(\mathbb{R}^+)) \ar[r] 
              &0,
              }
\end{equation*}       
where $k_i$ is the number of elements of $\pa \omega_i$ and
  $l$ is the number of conical points.
\end{enumerate}
\end{proposition}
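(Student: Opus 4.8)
The plan is to deduce both claims from the general machinery set up in Section \ref{groupoid}, applied to the concrete description of $\cG$ obtained in Proposition \ref{Groupoid}. First I would prove amenability. The groupoid $\cG$ is glued from finitely many copies of $\cJ'_{p_i}$ (each isomorphic to $\cH \times (\pa\omega_i)^2$) and the pair groupoid $\Omega_0^2$. Amenability is preserved under the relevant operations: the pair groupoid is amenable (its $C^*$-algebra is $\cK$), the action groupoid $\cH = [0,\infty)\rtimes (0,\infty)$ is amenable because $(0,\infty)$ is an amenable group, products of amenable groupoids are amenable, and amenability is detected locally/is stable under the extension \eqref{ex.seq.rest}. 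Concretely, $M_0$ and $\pa M$ are invariant, $\cG_{M_0}=M_0^2$ is amenable, and $\cG_{\pa M}$ is a disjoint union of groupoids of the form $(\pa\omega_i)^2 \times (\R^+\times\{p_i\})$ — a product of a pair groupoid with the (amenable, being a group) $\R^+$ — hence amenable; by the two-out-of-three property for amenability along the exact sequence \eqref{ex.seq.rest}, $\cG$ is amenable, i.e.\ $C^*(\cG)\cong C^*_r(\cG)$.

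Next I would establish the exact sequence. Take $Y:=\pa M$, which by Proposition \ref{Groupoid}(2) is a closed invariant submanifold of $M$, so that \eqref{ex.seq.rest} gives
\begin{equation*}
0 \longrightarrow C^*(\cG_{M\setminus \pa M}) \longrightarrow C^*(\cG) \longrightarrow C^*(\cG_{\pa M}) \longrightarrow 0.
\end{equation*}
Now $M\setminus \pa M = M_0 = \Omega_0$ and $\cG_{M_0}\cong M_0\times M_0$ is the pair groupoid of the (positive-dimensional, connected, second countable) manifold $\Omega_0$, so by Example \ref{expl_pair_grpd} its $C^*$-algebra is $\cK$, the compacts on $L^2(\Omega_0)$. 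For the quotient, $\cG_{\pa M}=\coprod_{i=1}^l (\pa\omega_i\times\pa\omega_i)\times(\R^+\times\{p_i\})$; since $C^*$ of a disjoint union of groupoids is the direct sum, and $C^*$ of a product groupoid is the (spatial) tensor product (Proposition 4.5 in \cite{LN}), we get $C^*(\cG_{\pa M})\cong \bigoplus_{i=1}^l C^*((\pa\omega_i)^2)\otimes C^*(\R^+\times\{p_i\})$. Here the factor $\R^+\times\{p_i\}$ is, as a groupoid with one-point unit space, the group $\R^+\cong\R$, so $C^*(\R^+)\cong\cC_0(\widehat{\R})\cong\cC_0(\R)\cong\cC_0(\R^+)$ after a choice of identification (equivalently, one reads this off the Toeplitz sequence for $\cH$ restricted to the boundary, as indicated just before the statement: tensoring the Toeplitz sequence by $\cK$ exhibits the boundary piece as $\cC_0((0,\infty))\otimes\cK$). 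Finally $C^*((\pa\omega_i)^2)\cong\cK$ if $\dim\pa\omega_i>0$, i.e.\ if $n\ge 3$, giving the first sequence with quotient $\bigoplus_i \cC_0(\R^+)\otimes\cK$; and $C^*((\pa\omega_i)^2)\cong M_{k_i}(\C)$ if $n=2$, where $k_i=\#\pa\omega_i$, giving the quotient $\bigoplus_i M_{k_i}(\C)\otimes\cC_0(\R^+)=\bigoplus_i M_{k_i}(\cC_0(\R^+))$, which is the second sequence.

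The main obstacle I anticipate is not the formal exactness — that is handed to us by \eqref{ex.seq.rest} once invariance of $\pa M$ is granted from Proposition \ref{Groupoid} — but rather pinning down the identification of the boundary $C^*$-algebra with the clean model $\bigoplus_i\cC_0(\R^+)\otimes C^*((\pa\omega_i)^2)$ with the correct normalization. One must be careful that the $\R^+$-factor appearing in $\cG_{\pa M}$ genuinely carries the group (equivalently pair) structure rather than a residual action-groupoid structure, so that $C^*(\R^+)$ is commutative and equals $\cC_0(\R^+)$ (via Fourier/Mellin transform, $\cC_0(\widehat{\R^+})\cong\cC_0(\R^+)$); this is exactly the point where the restriction $\cR_{\pa M}$ of the transformation structure of $\cH=[0,\infty)\rtimes(0,\infty)$ to its boundary $\{0\}$ collapses to the group $(0,\infty)$ acting trivially, consistent with the computation $C^*(\cH)=\cC_0([0,\infty))\rtimes\R^+$ and the Toeplitz sequence recalled above. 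A secondary bookkeeping point is amenability of $\cG_{\pa M}$ feeding into both the amenability claim and the identification $C^*=C^*_r$ used implicitly when writing the exact sequences; this is covered by the product and disjoint-union stability of amenability already invoked.
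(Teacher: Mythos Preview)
Your proposal is correct and follows essentially the same route as the paper: amenability is obtained by noting that $\cG_{M_0}$ (pair groupoid) and $\cG_{\pa M}$ (disjoint union of products of pair groupoids with the abelian group $\R^+$) are amenable, and the exact sequences come from the general sequence \eqref{ex.seq.rest} applied to the closed invariant subset $\pa M$, together with the identifications $C^*(\cG_{M_0})\cong\cK$ and $C^*((\pa\omega_i)^2\times\R^+)\cong C^*((\pa\omega_i)^2)\otimes\cC_0(\R^+)$. The paper's proof is slightly terser about the identification $C^*(\R^+)\cong\cC_0(\R)$, but the argument is the same.
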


\begin{proof}
To prove amenability, we first note that the pair groupoid is amenable
(since the regular representations are essentially the only
representations). Moreover, the disjoint union of amenable groupoids
is amenable and the same holds for crossed products by amenable
groups. Since $\R^+$ is abelian, hence amenable, we have that $
[0,\infty)\rtimes (0,\infty) \times (\partial\omega_{p_i})^2$ is
  amenable. Since $\cG$ is given by the gluing of two amenable
  groupoids, it is amenable. (In fact, we only needed $\cG_{M_0}$,
  $\cG_{\pa M}$ amenable, see \cite{Ren}).

As for the exact sequences, since $\pa M$ is a closed, invariant
submanifold of $M$, we have from (\ref{ex.seq.rest}) that
\begin{equation*} \xymatrix{ 
0 \ar[r]
  &C^*(\cG|_{M\backslash \pa M}) \ar[r]
      &C^*(\cG) \ar[r]
          &C^*(\cG|_{\pa M}) \ar[r] 
              &0.
}
\end{equation*}
Assume first that $n\geq 3$. The $C^*$-algebra of the pair groupoid is
isomorphic, through the vector representation, to the ideal of compact
operators, and one can easily check that $C^*(\R^+ \times \{p_i\})=
\cC_0(\R)$. Hence, we obtain from (1) and (2) in Proposition
\ref{Groupoid},
$$ C^*(\cG_{M_0})=\cK, \quad \text{and} \quad C^*(\cG|_{\pa M}) =  \bigoplus\limits_{i=1}^{l} \cK \otimes \cC_0(\R^+)
$$ and the result follows in this case. If $n=2$, then $ \pa \omega_i$
are discrete sets, hence $C^*((\pa \omega_i)^2)\cong M_{k_i}(\C)$,
where $k_i$ is the number of elements of $\pa\omega_i$. The exact
sequence then follows in the same way.
\end{proof}

Taking into account the remarks at the end of Section
\ref{straightcone}, we can characterize the class $\Psi^m(\cG)$ of
pseudodifferential operators on our boundary groupoid:

\begin{proposition}
Let $\cG$ be the boundary groupoid (\ref{grpd.nocrack}) with units $M$
associated to a domain $\Omega\subset \R^n$ with conical points
$\Omega^{(0)}=\{p_1, \cdots, p_l\}$, $\Omega_0=\Omega \setminus
\Omega^{(0)}$ be the smooth part of $\pa\Omega$.  Then
\begin{enumerate}
  \item $\Psi^m(\cG_{M_0})\cong \Psi^m(\Omega_0)$.
  \item If $P\in \Psi^m(\cG_{\pa M})$ then for each $p_i\in \Omega^{(0)}$, $P$ defines a Mellin convolution operator on $\R^+\times \pa \omega_i$.
  \item $\Psi(\cG)\supset \Psi({^b\cG})$, the $b$-pseudodifferential operators on $M$. 
\end{enumerate}
\end{proposition}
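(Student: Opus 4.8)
The plan is to verify each of the three claims by identifying the relevant restricted groupoid or subgroupoid and invoking the identifications already established.

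\textbf{Part (1).} First I would recall from Proposition \ref{Groupoid}(1) that $M_0 = int(M) = \Omega_0$ is an invariant subset with $\cG_{M_0} \cong M_0 \times M_0$, the pair groupoid of $\Omega_0$. Then by Example \ref{expl_pair_grpd}, a family $P = (P_x)_{x\in M_0}$ in $\Psi^m(\cG_{M_0})$ is necessarily constant (by right invariance over the pair groupoid), and the identification $\cG_x \cong M_0$ shows that $\Psi^m(\cG_{M_0}) \cong \Psi^m_{\mathrm{comp}}(\Omega_0)$. This is essentially immediate from the pair groupoid example; the only point to note is that $\Omega_0$ is an open manifold without corners, so the compactly supported pseudodifferential operators are the standard ones.

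\textbf{Part (2).} For $P \in \Psi^m(\cG_{\pa M})$, I would use Proposition \ref{Groupoid}(2): $\pa M = \bigcup_i \{p_i\} \times \pa\omega_i$ is invariant and $\cG|_{\pa M} = \coprod_{i=1}^l (\pa\omega_i \times \pa\omega_i) \times (\R^+ \times \{p_i\})$. So $P$ decomposes as a direct sum over $i$ of operators $P^{(i)}$ on the groupoid $\cJ_{p_i}|_{\pa M_{p_i}} = (0,\infty) \times (\pa\omega_i)^2$, exactly the boundary restriction $\cJ_{\pa M}$ from Section \ref{straightcone}. I would then quote the discussion at the end of Section \ref{straightcone}: such a $P^{(i)}$ is given by a distribution kernel $k_{P^{(i)}}$ on $\R^+ \times (\pa\omega_i)^2$, and by right invariance of the family (invariance under the dilation action on $\R^+$) this kernel defines a Mellin convolution operator on $\R^+ \times \pa\omega_i$, in the sense of \cite{LP, NQ1}. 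The matrix structure over the connected components of $\pa\omega_i$ fits with the description already given there.

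\textbf{Part (3).} Here I would invoke Remark \ref{rmk.bgrpd} (or rather the analogue of it for $\cG$): the $b$-groupoid ${^b\cG}$ of $M$ is $M_0^2 \bigcup_j \R^+ \times (\pa_j M)^2$, where $\pa_j M$ runs over connected components of $\pa M$. Since each $\cJ'_{p_i}$ contains ${^b\cG}$ over the corresponding collar as a wide open subgroupoid (the diagonal entries in the matrix decomposition of Section \ref{straightcone}), and the gluing with $\Omega_0^2$ is compatible, ${^b\cG}$ is an open, wide subgroupoid of $\cG$, with $A({^b\cG}) = A(\cG) = {^bTM}$ by Proposition \ref{Groupoid}(3). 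An open wide subgroupoid inclusion induces an inclusion of pseudodifferential calculi (a uniformly supported right-invariant family on ${^b\cG}$ extends by zero off ${^b\cG}$ to one on $\cG$, since the reduced support stays compact), so $\Psi^m(\cG) \supset \Psi^m({^b\cG})$, which is Melrose's $b$-calculus on $M$ by Example \ref{bgrpd}.

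\textbf{Main obstacle.} I expect the only genuinely delicate point to be part (2): one must check carefully that a right-invariant uniformly supported family on the action-groupoid-times-pair-groupoid $\cJ_{\pa M_{p_i}} = ([0,\infty)\rtimes\R^+)|_{\{0\}} \times (\pa\omega_i)^2 = \R^+ \times (\pa\omega_i)^2$ really does translate, via the dilation invariance, into a Mellin convolution operator (i.e.\ convolution in the multiplicative group $\R^+$), with the correct interplay between the convolution variable and the base $\pa\omega_i$. This is a change-of-variables computation ($t \mapsto \log t$ turning $\R^+$-convolution into ordinary convolution on $\R$), routine but the one place where the structure is not purely formal; everything in parts (1) and (3) follows directly from the groupoid examples and the structural results already proved.
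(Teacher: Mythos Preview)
Your proposal is correct and follows essentially the same route as the paper's proof: part (1) via Proposition \ref{Groupoid}(1) and the pair-groupoid example, part (2) via the decomposition of $\cG_{\pa M}$ into the pieces $(\pa\omega_i)^2 \times \R^+ \times \{p_i\}$ and the Mellin-convolution identification, and part (3) via the observation that ${^b\cG}$ sits inside $\cG$ as an open wide subgroupoid. The paper is slightly more explicit in two places --- it writes down the Mellin kernel $\tilde{\kappa}_i(r,s,x',y') := \kappa_i(r/s,x',y')$ directly, and it spells out that $\R^+ \times (\pa_j\omega_i)^2 = \cG^{\pa_j\omega_i}_{\pa_j\omega_i}$ for each connected component $\pa_j\omega_i$ --- but these are exactly the verifications you flagged as the routine content behind your references.
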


\begin{proof}
{F}rom Proposition \ref{Groupoid}, we have $\cG_{M_0}\cong
(\Omega_0)^2$, so $(1)$ follows from invariance. Moreover, for $P\in
\Psi(\cG_{\pa M})$, we have that on each invariant subset
$\{p_i\}\times \pa \omega_i$, $P$ defines a kernel $\kappa_{i}\in
(\pa\omega_i )^2 \times (\R^+ \times \{p_i\})$, hence a Mellin
convolution operator on $\R^+\times \pa \omega_i$ with kernel
$\tilde{\kappa_{i}}(r,s, x',y'):=\kappa_i(r/s, x', y')$.

As for $(3)$, note that for $M$ as in (\ref{units.nocrack}), the
$b$-groupoid becomes
$${^b\cG}= M_0^2 \; \bigcup_{i,j} \;\R^+\times (\pa_j \omega_i)^2,$$
where $\pa_j \omega_i$ denote the (open) connected components of $\pa
\omega_i$, $i=1, \cdots, l$. Since
$$\R^+\times (\pa_j \omega_i)^2= d^{-1}(\pa_j \omega_i)\cap
r^{-1}(\pa_j \omega_i) = \cG^{\pa_j \omega_i}_{\pa_j \omega_i},$$ we
have that ${^b\cG}$ is an open subgroupoid of $\cG$. The result
follows.
\end{proof}

We remark that it follows from the proof that if $\pa \omega_i$ is
connected, for all $i=1, \cdots, l$, then $\cG={^b\cG}$ (see also
Remark \ref{rmk.bgrpd}).

In general, we also have that the $b$-indicial algebra
$\Psi^0({^b\cG}_{\pa M})$ is a *-subalgebra of the indicial algebra
$\Psi(\cG_{\pa M})$. If $Q=(Q_x)\in \Psi^0({^b\cG}_{\pa M})$, then for
$x\in \pa_j \omega_i\subset \pa M$, we have $Q_x\in \Psi^0(\R^+\times
\pa_j \omega_i)$, so that it defines $P_x\in \Psi^0(\R^+\times \pa
\omega_i)$ by $P_x^{kk'}=Q_x$, if $k=k'=j$, and $P_x^{kk'}=0$,
otherwise, where $P_x^{kk'}: C^\infty_c(\R^+\times \pa_{k'}
\omega_i)\to C^\infty_c(\R^+\times \pa_{k} \omega_i)$. For each $x\in
\pa M$, we then identify $Q_x$ with a diagonal entry in $P_x$,
regarded as a matrix of operators. (Note that if $n=2$, then we get a
matrix of operators on $\R^+$.)  The indicial algebra plays a key role
in Fredholmness of operators on $\cG$, as we shall see in Section \ref{Fredholmness}.

To finish this subsection, we now give a result that shows that the boundary groupoid and the layer potentials $C^*$-algebra only depend, up to equivalence, on the number of singularities of the conical domain $\Omega$. (Compare with a
similar result for the $b$-groupoid of a manifold with corners
\cite{Monthubert}). Recall that two groupoids $\cG$ and $\cG'$ are
\emph{equivalent} if there exists a groupoid $\cZ$ containing $\cG$
and $\cG'$ as full subgroupoids, that is, if $\cG =
\cZ_A^A:=d_\cZ^{-1}(A)\cap r_\cZ^{-1}(A)$ for some subset $A\subset
\cG^0$ intersecting all orbits, and the same for $\cG'$. (We also get
$\cG$ and $\cG'$ equivalent to $\cZ$.) In this case, it is an
important result of Muhly, Renault and Williams that the
$C^*$-algebras $C^*(\cG)$ and $C^*(\cG')$ are Morita equivalent (see
\cite{MRW} for the definitions and result).

\begin{theorem}\label{thm.equiv}
Let $\Omega, \, \Omega'\subset \R^n$ be two conical domains with no
cracks, and $\cG$, $\cG'$ be the boundary groupoids as in
(\ref{grpd.nocrack}). If $\Omega$ and $\Omega'$ have the same number
of true conical points then $\cG$ is equivalent to $\cG'$. Hence,
$C^*(\cG )$ is Morita equivalent to $C^*(\cG')$.
\end{theorem}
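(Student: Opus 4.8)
The plan is to build an explicit ambient groupoid $\cZ$ containing both $\cG$ and $\cG'$ as full subgroupoids, exploiting the fact that, by construction (\ref{grpd.nocrack}), each boundary groupoid is assembled from $l$ copies of the ``conical'' piece $\cJ'_{p_i}$ glued to a single pair groupoid $\Omega_0^2$ of the smooth part. First I would observe that the precise shape of each base $\omega_{p_i}$ is irrelevant up to equivalence: the conical building block $\cJ'_{p_i} = \cH \times (\pa\omega_{p_i})^2$ is, via the pair-groupoid factor, equivalent to $\cH$ itself (for $n\ge 3$, a pair groupoid $X^2$ with $X$ a nonempty manifold is equivalent to the trivial groupoid on a point; for $n=2$, $(\pa\omega_{p_i})^2$ is equivalent to the point groupoid as well, since $\pa\omega_{p_i}$ is a nonempty finite set). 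Likewise $\Omega_0^2$ is equivalent to a point. So morally $\cG$ is equivalent to a groupoid depending only on $l$ — namely $l$ copies of $\cH$ with their ``smooth ends'' all attached to a common pair groupoid. The task is to make this rigorous by producing one groupoid $\cZ$ with a full subset realizing $\cG$ and another realizing $\cG'$.

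Concretely, I would proceed as follows. Fix $l$, the common number of true conical points. For each $i\in\{1,\dots,l\}$ choose a manifold $Z_i$ that simultaneously contains (diffeomorphic copies of) $\pa\omega_{p_i}$ and $\pa\omega'_{p_i}$ as submanifolds meeting every... actually it is cleaner to take $Z_i := \pa\omega_{p_i} \sqcup \pa\omega'_{p_i}$ and form $\cH \times Z_i^2$; then $\cH \times (\pa\omega_{p_i})^2 = (\cH\times Z_i^2)|_A$ for $A = [0,\epsilon)\times \pa\omega_{p_i}$, and this $A$ meets every orbit of $\cH\times Z_i^2$ (the orbits are products of $\cH$-orbits with all of $Z_i$), so $\cH\times(\pa\omega_{p_i})^2$ is a full subgroupoid, and similarly for the primed version. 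For the smooth part, set $W := \Omega_0 \sqcup \Omega_0'$ (or, if one wants $W$ connected, any connected manifold containing both as open submanifolds) and use the pair groupoid $W^2$, in which $\Omega_0^2$ and $(\Omega_0')^2$ are full subgroupoids. Now glue: define
\begin{equation*}
\cZ := \left(\coprod_{i=1}^{l} \bigl(\cH \times Z_i^2\bigr)\right) \ \bigcup_{\Phi}\ W^2,
\end{equation*}
where the gluing maps $\Phi_i$ extend simultaneously the gluing maps $\varphi_{p_i}$ for $\Omega$ and $\varphi'_{p_i}$ for $\Omega'$ — this is possible because $\Phi_i$ need only be defined on an open subset of the interior of $\cH\times Z_i^2$ (which is a pair groupoid $((0,\epsilon)\times Z_i)^2$) and map it diffeomorphically, preserving structure, into $W^2$; one simply takes the disjoint union of the two clutching maps on the two disjoint pieces of $Z_i = \pa\omega_{p_i}\sqcup\pa\omega'_{p_i}$ landing in the two disjoint pieces $\Omega_0,\Omega_0'$ of $W$. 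As in Proposition \ref{Groupoid}, the resulting $\cZ$ is a Lie groupoid with unit space $\left(\coprod_i [0,\epsilon_i)\times Z_i\right)\cup_\Phi W$.

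With $\cZ$ in hand, let $A\subset \cZ^{(0)}$ be the union of the pieces coming from $\Omega$ (i.e. $\coprod_i [0,\epsilon_i)\times\pa\omega_{p_i}$ together with $\Omega_0\subset W$), and $A'$ the analogous union for $\Omega'$. By the gluing, $\cZ_A^A = \cG$ and $\cZ_{A'}^{A'} = \cG'$ as Lie groupoids. It remains to check that $A$ (and by symmetry $A'$) meets every orbit of $\cZ$: on the smooth piece this is clear since $\Omega_0$ meets $W^2$'s single orbit, and, crucially, the smooth-part orbit in $\cZ$ absorbs the ``open end'' of every conical piece (the interior $(0,\epsilon_i)\times Z_i$ is glued into $W$), so an orbit contained in a boundary face $\{0\}\times\pa\omega'_{p_i}$ of a primed conical piece is connected through $\cH$'s orbit structure to the open end and hence to $\Omega_0$; therefore every orbit of $\cZ$ already hits $A$. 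Thus $\cG$ and $\cG'$ are full subgroupoids of $\cZ$, hence equivalent, and by Muhly--Renault--Williams \cite{MRW} the $C^*$-algebras $C^*(\cG)$ and $C^*(\cG')$ are Morita equivalent. The main obstacle I anticipate is the bookkeeping in verifying that $\cZ$ is genuinely a \emph{Lie} groupoid — that the clutching along $\Phi$ produces smooth structural maps with $d$ a submersion — but this is entirely parallel to the verification already done for $\cG$ itself in Proposition \ref{Groupoid}, so it reduces to checking that the simultaneous gluing map $\Phi_i$ is still a diffeomorphism onto an open set, which follows because its two components have disjoint domains and disjoint images.
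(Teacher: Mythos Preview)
Your construction of $\cZ$ is essentially the same as the paper's: the paper takes $Z:=(\Omega\sqcup\Omega')/\!\sim$ with $p_i\sim p_i'$, observes that $Z$ is again a conical domain with no cracks, and lets $\cZ$ be its boundary groupoid; unpacking (\ref{grpd.nocrack}) for $Z$ gives exactly your gluing of $\cH\times Z_i^2$ (with $Z_i=\pa\omega_{p_i}\sqcup\pa\omega'_{p_i}$) to $W^2$ (with $W=\Omega_0\sqcup\Omega_0'$).

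However, your verification that $A$ meets every $\cZ$-orbit contains a real error at the crucial step. You write that an orbit in the boundary face $\{0\}\times\pa\omega'_{p_i}$ ``is connected through $\cH$'s orbit structure to the open end and hence to $\Omega_0$''. This is false: the orbits of $\cH=[0,\infty)\rtimes\R^+$ are precisely $\{0\}$ and $(0,\infty)$, so $\{0\}$ is an \emph{invariant} subset and nothing at the boundary is in the same $\cH$-orbit as any interior point. If that were the only mechanism available, the boundary orbit $\{0\}\times\pa\omega'_{p_i}$ would miss $A$ entirely and your argument would collapse.

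The correct reasoning --- and this is exactly what the paper does --- uses the \emph{pair-groupoid factor} $Z_i^2$, not $\cH$. In $\cH\times Z_i^2$ the orbit through any point $(0,x')$ is $\{0\}\times Z_i=\{0\}\times(\pa\omega_{p_i}\sqcup\pa\omega'_{p_i})$, because the pair groupoid on $Z_i$ has a single orbit. This orbit meets $A$ in $\{0\}\times\pa\omega_{p_i}$. In other words, it is precisely the arrows in $Z_i^2$ running between the primed and unprimed boundary components that make $A$ full; this is the one place where your choice $Z_i=\pa\omega_{p_i}\sqcup\pa\omega'_{p_i}$ (as opposed to the disjoint union $(\pa\omega_{p_i})^2\sqcup(\pa\omega'_{p_i})^2$) actually does work. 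Once you fix this sentence, your proof is correct and coincides with the paper's.
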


\begin{proof}
We construct a groupoid $\cZ$ that yields the equivalence. Let $p_i$,
$p'_i$, $i=1, \cdots, l$ be the (true) conical points of $\Omega$ and
$\Omega'$, respectively. Define
$$ Z:= (\Omega \sqcup \Omega')/\sim$$ where we identify $p_i\sim
p'_i$, for each $i=1, \cdots, l$. Then $Z$ is also a conical domain
with no cracks : we have $\pa Z \setminus \{[p_1], \cdots, [p_l]\} =
\Omega_0 \sqcup \Omega'_0$, with $\Omega_0$ and $\Omega'_0$ the smooth
parts of $\pa \Omega$ and $\pa \Omega'$, respectively, hence it is
smooth. If $x= [p_i]$, then $\omega_{p_i} \sqcup \omega_{p_i'}$ is a
basis for a local cone at $x$ (note that $\omega_{p_i} \sqcup
\omega_{p_i'}\neq S^{n-1}$ since $p_i$ and $p'_i$ are true vertices,
and by definition $\overline{\omega_{p_i} \sqcup \omega_{p_i'}}=
\overline{\omega_{p_i} }\sqcup \overline{\omega_{p_i'}}$, so $Z$ has
no cracks).

Let $\cZ$ be the boundary groupoid (\ref{grpd.nocrack}) associated to
$Z$. It follows from (\ref{units.nocrack}) that the units of $\cZ$
coincide with $M\sqcup M'$, where $M, M'$ are units of $\cG$,
$\cG'$. Moreover, we have $\cG= \cZ_M^M$ and we check that $M$
intersects all orbits. Over the interior of $M\sqcup M'$, the orbits
are given by $\Omega_0\cup \Omega'_0$, which intersects $M$. Over the
boundary, the orbit at $([p_i],x)$, with $x\in \pa \omega_{p_i} \cup
\pa \omega_{p'_i}$ is $(\omega_{p_i} \sqcup \omega_{p_i'})$, which
also intersects $M$.

Hence, $\cG$ is a full subgroupoid of $\cZ$.  Since the same holds for
$\cG'$ with respect to $M'$, we conclude that $\cG$ and $\cG'$ are
equivalent. The last assertion follows from the main result in
\cite{MRW}.
\end{proof}

%%%%%%%%%%%%%%%%%%%%%%%%%%%%%%%%%%%%%%%%%%%%%%%
\smallskip
\subsection{Groupoid construction for polygonal domains with ramified cracks}\label{crack}

In this subsection, we consider  a polygonal domain $\Omega\subset \R^2$. We allow $\Omega$ to have ramified cracks (see the definition below). To any such domain, we associate a generalized conical domain, the so-called \emph{unfolded domain} $\Omega^u$, with no cracks, and apply the groupoid construction of the previous section. 

We first give a precise definition of cracks. We distinguish between different kinds of crack points, even though we will basically need the distinction between  smooth and singular crack points. The case of conical crack points is also considered, where we need to distinguish between  the crack part and  the non-crack parts of the conical neighborhood. 

\begin{definition}\label{def.crack}
Let $\Omega\subset \R^2$ be a polygonal domain as in Definition \ref{domain}. Let $x \in \pa\Omega$. Suppose that there  exists a neighborhood $V_x$ of $x$ in $\Omega$, an open subset $\omega_x \subset S^1$, $\omega_x \neq S^1$, and a diffeomorphism $\phi_x: V_x \rightarrow B^2$ such that in polar coordinates $(r,\theta)$, we have 
$$\phi_x(V_x \cap \Omega)= \{(r,\theta), r\in (0,1), \theta \in \omega_x\}.$$
Then $x$ is a \emph{crack point} if $\pa \omega_x \neq \pa \overline{\omega_x}$. Moreover:
\begin{enumerate}
\item  $x$ is an \emph{inner crack point} if $\overline{\omega_x}= S^1$. If $\omega_x$ is isomorphic to $S^1_+\sqcup S^1_+$, where $S^1_+$ is the hemisphere, we call $x$ a \emph{smooth crack point}. 

\item $x$ is an \emph{outer crack point} if $\overline{\omega_x} = S^1_+$.
\item otherwise, $x$ is a \emph{conical crack point}. 
\end{enumerate}
\end{definition}

\begin{remark}
In the above definition, we shall always assume that crack curves intersect transversally. \end{remark}

\begin{remark}
By Definition \ref{def.crack}, any non-smooth point on the boundary $\pa\Omega$ can be classified to be one of true conical non-crack points, inner, outer, and conical crack points.
Moreover, it is easy to check that the set of smooth crack points is given by a union of smooth curves, the actual cracks, with boundary given by inner,  outer or possibly conical crack points. In higher dimensions, even if Definition \ref{def.crack} would still apply, a crack would be a codimension $1$ submanifold and its boundary would be a codimension $2$ submanifold, corresponding to non-smooth crack points of $\pa \Omega$. Hence $\Omega$ would not be a conical domain, in the sense that the set of singular points is not discrete (and finite). In this case, the boundary points of the crack behave like edge points, rather than conical points, and $\Omega$ would be the interior of a manifold with corners.
\end{remark}

To each crack point $x$, we now associate  a \emph{ramification number}, which will turn out to be very useful to define the unfolded domain as it gives us the number of different ways we can approach the boundary close to $x$. It is defined as follows.
\begin{enumerate}
\item If $x$ is a inner or outer crack point, we let $k$ be the number of connected components of $\omega_x$ (so $\omega_x$ is isomorphic to the disjoint union of $k$ copies of $(0,2\pi/k)$ or $(0,\pi/k)$, for inner and outer crack points, respectively). Then we say that $x$ is a crack point with $k$ \emph{ramifications}. (In particular, if $x$ is a smooth crack point, then its ramification number is $2$).
\item  If $x$ is a conical crack point, then we  split $\omega_x= \omega_x' \cup \omega_x''$, where $\omega_x'$ and $\omega''$ are disjoint and given by unions of connected components of $\omega_x$ such that $I_j \subset \omega'$ if and only if $\overline{I_j}\cap \overline{I_i} =\emptyset$ for all $i \neq j$, where $I_j$ are the connected components of $\omega$, so $\pa \omega' =  \pa \overline{\omega_x'}$ and $\pa \omega_x''\neq  \pa \overline{\omega_x''}$.  Note that $\omega''_x$ is non-empty (otherwise, we get a true conical non-crack point.) Let  $k$ be the number of connected components of $\omega_x''$. If $\omega'_x=\emptyset$, we say that  $x$ is a conical crack point with $k$ \emph{ramifications}. If $\omega'_x\neq \emptyset$, we say that  $x$ is a conical crack point with $k+1$ \emph{ramifications}. 
\end{enumerate}

 If $x$ is a conical crack point, we always refer to $\omega'_x$ and $\omega''_x$ as the no-crack and the crack part of $\omega_x$, respectively. For instance, if $\omega_x= (0,\pi/4) \cup (\pi/4, \pi/2)\cup (\pi, 5\pi/4)\cup (3\pi/2, 7\pi/4))$, then $\omega'_x= (\pi, 5\pi/4)\cup (3\pi/2, 7\pi/4))$, $\omega''_x= (0,\pi/4) \cup (\pi/4, \pi/2)$ and  its ramification number is $3$. Also, a point with ramification $1$ means that it is an end point of some crack curve.
We refer to conical/outer/inner crack points with $k\neq 2$, as \emph{singular crack points}, as they are true conical points in the sense of Definition \ref{domain} (that is, singularities of $\pa \Omega$).

For the remainder of this subsection,  we denote by $\Omega^{(0)}=\{p_1, \cdots, p_l\}$ the set of true conical non-crack points. (Note that here $\Omega^{(0)}$ does not include all conical points in the sense of Definition of \ref{domain}.) Also, we let $\cC:=\{c_1, \cdots, c_m\}$ be the set of singular crack points, with $c_1, \cdots, c_{m'}$ the conical crack points for which $\omega'_{c_j}\neq \emptyset$.

Now, if $x$ is a smooth boundary point, then the
intersection of a small neighborhood of $x$ and $\pa\Omega$ is a
smooth curve $\gamma$. We then have two possibilities: either
$\Omega$ lies on one side of $\gamma$, or $\Omega$
lies in both sides of $\gamma$ (the latter corresponds to a {smooth}
crack point). Hence, we have one inward normal unit vector in the first case, and two inward normal
unit vectors in the second case. 

The most important idea behind the construction of the unfolded domain
is that each smooth crack point $x$ should be covered by two points,
which correspond to the two sides of the crack and the two possible
non-tangential limits at $x$ of functions defined on $\Omega$. (See
\cite{LiMN, MN} for details.) The point is to cover each smooth crack
curve by two parallel smooth curves, so that we distinguish the two
normal directions from which we approach the boundary. When cracks
ramify, we need further to distinguish from which side we approach the
point of ramification.  In general, if a crack point has ramification $k$, then we cover this point by $k$ points. 

Following \cite{MN}, we define the \emph{unfolded boundary}
$\pa^u\Omega$ as the set of inward pointing unit normal vectors to the
smooth part $\Omega_0$ of $\pa\Omega$.  We have a canonical projection
$\kappa : \pa^u\Omega \to \Omega_0$, which is two-to-one over smooth
crack points and one-to-one over the non-crack smooth points. We then
define the \emph{unfolded domain} as 
\begin{equation}\label{unfoldeddomain}
\Omega^u:=\Omega \cup \pa^u \Omega.
\end{equation}
We can think of $\Omega^u$ as a (generalized) polygonal domain,
without cracks. (The angles at some points may be $2\pi$, i.e., some
two adjacent edges may be parallel.) The boundary of $\Omega^u$ is
given by $\pa^u\Omega$ together with the (true) non-crack vertices of
$\Omega$, together with  $k$-covers of crack points with ramification
$k$. Note that the smooth part of $\pa(\Omega^u)$ is given by
$\pa^u\Omega$, that is, by the union of the smooth part of $\pa\Omega$
and the $2$-covers of the smooth crack curves.

 We will let, from now on, $\cC^u:=\{c_{ji}\;|\; c_{ji} \text{
  covers } c_j \in \cC, i=1, \cdots, k_{c_j}\}\subset \pa(\Omega^u)$, where $k_{c_j}$ is the number of 
  connected components of
$\omega_{c_j}$, if $c_j$ is not a conical crack point (that is, the ramification number), or the number of connected components of $\omega_{c_j}''$, if $c_j$ is a conical crack
point. In this case,  if $\omega_{c_j}'\neq \emptyset$, we regard the
cover over $c_j$ as a non-crack point $c_{j0}$ together with a
$k_{c_j}$-cover.

With the notation as above, we see that the set of (true) vertices of $\Omega^u$ is given by
\begin{equation}
\label{vertex}
V^u= \Omega^{(0)}\cup \cC^u \cup \{c_{j0}\}_{j=1, \cdots, m'}.
\end{equation}
 Moreover, we have that
$\Omega^u\setminus V^u$ is given by a disjoint union of smooth curves,
as in the no-cracks case.

For $x\in V^u$, we denote by $\omega_x^u$ the basis of the cone at $x$ in $\Omega^u$. If $c\in\cC\subset \pa\Omega $ is a singular outer or inner crack point and the
basis of the cone at $c$ (in $\Omega$) is written as a union of open intervals
$\omega_c\cong \cup_{h=1}^{k_c} I_h$, 
then for each $c_h\in \cC^u\subset \pa(\Omega^u)$ in a $k_c$-cover of $c$, we have
$\omega^u_{c_h}= I_h$ is connected, and there is a neighborhood
$V_{c_h}$ such that $V_{c_h}\cap \Omega^u\cong (0,1)\times
\omega^u_{c_h}$. If $c$ is a conical crack point, then the same as above happens, replacing $\omega_c$ by the crack part $\omega''_c$. At the 'non-crack' point $c_0$, we have
$\omega^u_{c_{0}}=\omega_c'$ (possibly disconnected). 

Now we desingularize $\Omega^u$ and follow the same construction as in
Subsection \ref{highdimension} for $\Omega^u$. We obtain a groupoid
$\cG^u$ with the unit space isomorphic to $\partial' \Sigma(\Omega^u)$,
i.e., to a desingularization of $\pa \Omega^u$. Denote by $\cH= [0,1)\rtimes (0,1)$ the transformation
  groupoid. {F}rom (\ref{grpd.nocrack}), we define the \emph{boundary
    groupoid associated to a conical domain with ramified cracks} as
\begin{eqnarray*}
  \cG^u & := & \left(
  \coprod\limits_{x\in V^u}\cH\times (\partial\omega_{x}^u)^2\right) \quad \bigcup\limits_{\varphi} \quad (\pa^u \Omega)^2\\
  & = & \left(
  \coprod\limits_{p_i \in \Omega^{(0)}}\cH\times (\partial\omega_{p_i})^2
  \quad \bigcup \quad
     \coprod\limits_{ j=1}^{m' }\cH\times (\partial\omega'_{c_j})^2
   \quad \bigcup 
   \coprod\limits_{c_{jh}\in\cC^u}  \cH \times (\partial I_{c_j}^h)^2 
  \right) \quad\\
    & & \quad \bigcup\limits_{\varphi} \quad (\pa^u \Omega)^2,
\end{eqnarray*}
where $ I_{c_j}^h$ is the $h$-th connected component of $\omega_{c_j}$, respectively, of $\omega''_{c_j}$, if $c_j$ is non-conical, respectively, $c_j$ is a conical crack point, and $\varphi=(\varphi_{x})_{x\in V^u}$. Noting that $\pa \omega_x$ is a discrete set, with $\omega_x$ connected on covers of crack points, and denoting by $\cP_k$ the pair groupoid of a discrete set with $k$ elements, we have (writing $(\coprod A)^\alpha$ for the disjoint union of $\alpha$ copies of $A$):
\begin{equation}
\label{grpd.crack}
 \cG^u = \left(
  \coprod\limits_{p_i \in \Omega^{(0)}}\cH\times \cP_{2k_{p_i}}    \; \bigcup \;
    \coprod\limits_{ j=1, \cdots, m' }\cH\times \cP_{2k_{c_j}'} \; \bigcup \; 
  \left(\coprod\limits  \cH \times \cP_{2} \right)^\alpha \right) \;  \bigcup\limits_{\varphi} \; (\pa^u \Omega)^2,
\end{equation} 
where $k_{c_j}'$ is the number of connected components of $\omega_{c_j}'$ and $\alpha:= k_{c_1} + \cdots + k_{c_m}$ is a total ramification number of $\Omega$. The space of units of $\cG^u$ is given by
\begin{eqnarray}
\label{units.crack}
&&\\
  M &:=& \left(\coprod\limits_{p_i \in
    \Omega^{(0)}} [0,1)\times \partial\omega_{p_i} 
      \;\bigcup \;
    \coprod\limits_{ j=1}^{m' }\, [0,1)\times \partial\omega'_{c_j}
    \; \bigcup \;
    \coprod\limits_{c_{ji}\in\cC^u}  [0,1)\times \partial\omega^u_{c_{ji}} 
    \right)\;
    \bigcup\limits_{\varphi} \pa^u \Omega \nonumber\\
    & \cong &  \left(\coprod\limits_{ i=1}^{l} \left(\coprod[0,1)\right)^{2k_{p_i} } \; \bigcup \coprod\limits_{j=1}^{m'}\left(\coprod [0,1)\right)^{2k_{c_j}'}\; \bigcup \;
   \left( \coprod\limits  [0,1)\right)^{2\alpha}  \right)\;
    \bigcup\limits_{\varphi} \; \;\pa^u \Omega, \nonumber
\end{eqnarray}
which is diffeomorphic to a desingularization of $\pa(\Omega^u)$, as
in Section \ref{weightedspaces}. The unfolded boundary $\pa^u\Omega$
is an open dense set of $M$ and
\begin{equation*}
\label{ }
\pa M= \bigcup\limits_{x \in V^u} \{ x \} \times \pa \omega_x^u =
\bigcup\limits_{p \in \Omega^{(0)}} \{ p \} \times \pa \omega_p \;
\bigcup\limits_{c\in \cC^u} \; \{ c \} \times \pa \omega_c^u \;
\bigcup\limits_{j=1}^{m'} \; \{ c_j \} \times \pa \omega_{c_j}'.
\end{equation*}

\begin{definition}\label{def.layerC*crack}
The \emph{layer potentials $C^*$-algebra} associated to a polygonal domain $\Omega$ with ramified cracks  is defined as the groupoid $C^*$-algebra $C^*(\cG^u)$, where $\cG^u$ is the unfolded boundary groupoid as in (\ref{grpd.crack}).
\end{definition}

We summarize below the main properties of $\cG^u$ and its
$C^*$-algebra (see Propositions \ref{Groupoid} and \ref{C*alg}). As above, we let
$\Omega^{(0)}=\{p_1,p_2,\cdots, p_l\}$ be the set of conical,
non-crack points of $\Omega$, $\cC=\{c_1, \cdots, c_m\}$ be the set of
singular crack points, with $c_1, \cdots, c_{m'}$ the conical crack
points with not empty no-cracks part, and $\cC^u:=\{c_{ji}\;|\; c_{ji}
\text{ covers } c_j \in \cC, i=1, \cdots, k_{c_j}\}$.

\begin{proposition}\label{prop.grpd.crack}
Let $\cG^u$ be the groupoid (\ref{grpd.crack}) associated to a
polygonal domain with cracks $\Omega\subset \R^2$. Denote by $\pa^u
\Omega$ the unfolded boundary.  Then, $\cG^u$ is a Lie groupoid with
units $M$ such that
\begin{enumerate}

\item   $M_0=int(M) =\pa^u \Omega$ is an invariant subset and
$\cG^u_{M_0} \cong M_0 \times M_0.$

\item  $ \{ x \} \times \pa
\omega_x^u\subset \pa M$ is an invariant subset, for $x\in V^u$, and $ \pa  M$ is an invariant subset with
\begin{eqnarray*}
\cG^u_{\pa M}
&=& \coprod\limits_{i=1}^{l} (\pa\omega_{p_i})^2  \times (\R^+ \times \{p_i\}) \; \bigcup \; 
\coprod\limits_{j=1}^{m'} (\pa\omega'_{c_j})^2  \times (\R^+ \times \{c_j\}) \; \\ 
&&\bigcup \; \coprod\limits_{c_{jh}\in\cC^u}  (\pa I_{c_j}^h)^2 \times (\R^+ \times \{c_{jh}\}) 
\end{eqnarray*} 
where $ I_{c_j}^h$ is the $h$-th connected component of $\omega_{c_j}$, if $j>m'$, or of $\omega''_{c_j}$, if $m\leq m'$.

\item $A(\cG^u)\cong {^bTM}$, the $b$-tangent bundle of $M$.

\item $\cG^u$ is amenable, i.e., $C^*(\cG^u)\cong C^*_r(\cG^u) $.

\item We have the following exact sequence
 
\begin{equation*}
\xymatrix{ 
0 \ar[r]
  & \mathcal{K}\ar[r]
      &C^*(\cG^u) \ar[r]
          & \cA \ar[r] 
              &0,
   }
\end{equation*}       
  with $\cA:=\left(\bigoplus\limits_{i=1}^{l} M_{k_{p_i}}(\cC_0(\mathbb{R}^+))\right) \oplus \left( \;\bigoplus\limits_{j=1}^{m'} M_{k'_{c_j}}(\cC_0(\mathbb{R}^+))\right)\oplus \left( \bigoplus\limits_{\alpha \text{ times}}M_2(\cC_0(\mathbb{R}^+)\right) $, where $k_{p_i}$, $k'_{c_{j}}$ denote the number of elements of $\pa \omega_{p_i}$ and $\pa \omega_{c_j}'$, respectively, and $\alpha = k_{c_1}+\cdots k_{c_m}$ is the total ramification number of $\Omega$ (see Equation \ref{grpd.crack}).
\end{enumerate}
\end{proposition}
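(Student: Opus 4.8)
The plan is to observe that $\cG^u$ is, by its very construction, the boundary groupoid $(\ref{grpd.nocrack})$ associated to the unfolded domain $\Omega^u$, and that $\Omega^u$ is a (generalized) conical domain with \emph{no cracks} whose set of true vertices is $V^u$, given by $(\ref{vertex})$. Once this is in place, assertions $(1)$--$(4)$ follow by applying Propositions $\ref{Groupoid}$ and $\ref{C*alg}$ verbatim to $\Omega^u$, while assertion $(5)$ follows from the exact sequence $(\ref{ex.seq.rest})$ together with a bookkeeping of the faces coming from the ramified covers of the singular crack points.

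First I would check that $\Omega^u$ fits the framework of Subsection $\ref{highdimension}$. Away from $V^u$ the boundary of $\Omega^u$ (namely $\pa^u\Omega$) is a disjoint union of smooth curves; at a point $p_i\in\Omega^{(0)}$ the local cone basis is the original $\omega_{p_i}$; at a cover point $c_{jh}\in\cC^u$ the basis is the connected interval $I_{c_j}^h$, so $\pa\omega^u_{c_{jh}}$ consists of exactly two points and the no-cracks condition $\pa\omega^u_{c_{jh}}=\pa\overline{\omega^u_{c_{jh}}}$ holds automatically; and at a point $c_{j0}$ with $j\leq m'$ the basis is $\omega'_{c_j}$, which satisfies $\pa\omega'_{c_j}=\pa\overline{\omega'_{c_j}}$ by the definition of the splitting $\omega_x=\omega'_x\cup\omega''_x$ in Definition $\ref{def.crack}$. (The fact that some angles of $\Omega^u$ may equal $2\pi$ plays no role, since the groupoid construction $(\ref{grpd.nocrack})$ uses only the bases $\omega_x$ and the gluing diffeomorphisms $\varphi_x$.) Hence $\cG^u$ is a Lie groupoid with compact unit space $M\cong\pa'\Sigma(\Omega^u)$ as in $(\ref{units.crack})$; the clutching argument in the proof of Proposition $\ref{Groupoid}$ gives $A(\cG^u)\cong{^bTM}$, proving $(3)$; the interior $M_0=\pa^u\Omega$ is open, dense and invariant with $\cG^u_{M_0}\cong M_0\times M_0$, proving $(1)$; each $\{x\}\times\pa\omega^u_x$ with $x\in V^u$ is invariant with $d$-fibre $(0,\infty)\times\pa\omega^u_x$, and $\pa M$ is then invariant as a finite union, so splitting $V^u=\Omega^{(0)}\sqcup\{c_{j0}\}_{j\leq m'}\sqcup\cC^u$ and recalling that each cover point carries a connected basis yields the stated decomposition of $\cG^u_{\pa M}$, proving $(2)$; and amenability $(4)$ is word for word the argument of Proposition $\ref{C*alg}\,(1)$ (pair groupoids are amenable, disjoint unions of amenable groupoids are amenable, and the crossed products by the abelian group $\R^+$ are amenable, so a gluing of such pieces is amenable).

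For $(5)$, I would apply $(\ref{ex.seq.rest})$ to the closed invariant submanifold $Y=\pa M$. Since $M_0=\pa^u\Omega$ is a one-dimensional manifold, $C^*(\cG^u_{M\setminus\pa M})=C^*(M_0^2)\cong\cK$ by Example $\ref{expl_pair_grpd}$. Over $\pa M$, each $\pa\omega^u_x$ is a finite discrete set, so $C^*\big((\pa\omega^u_x)^2\big)\cong M_k(\C)$ with $k=|\pa\omega^u_x|$, while $C^*(\R^+\times\{x\})\cong\cC_0(\R^+)$ (as in the proof of Proposition $\ref{C*alg}$); hence the boundary piece over $x$ contributes $M_k(\cC_0(\R^+))$. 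Assembling over the three families of vertices --- $k=k_{p_i}$ for $p_i\in\Omega^{(0)}$, $k=k'_{c_j}$ for the points $c_{j0}$ with $j\leq m'$, and $k=2$ for each of the $\alpha=k_{c_1}+\cdots+k_{c_m}$ cover points in $\cC^u$ --- gives $C^*(\cG^u_{\pa M})\cong\cA$, and $(\ref{ex.seq.rest})$ becomes the asserted short exact sequence. The only real work is this last combinatorial identification, in particular keeping track of the fact that every cover point over a singular crack point carries a \emph{connected} one-dimensional cone basis (so always a $2\times 2$ matrix block); there is no new analytic content beyond Propositions $\ref{Groupoid}$ and $\ref{C*alg}$, and the single conceptual point that needs care is that the unfolding $(\ref{unfoldeddomain})$ genuinely produces a no-cracks conical domain --- which is precisely the reason the splitting $\omega_x=\omega'_x\cup\omega''_x$ was imposed in Definition $\ref{def.crack}$.
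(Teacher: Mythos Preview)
Your proposal is correct and follows exactly the approach the paper intends: the paper gives no separate proof of this proposition, merely stating ``We summarize below the main properties of $\cG^u$ and its $C^*$-algebra (see Propositions \ref{Groupoid} and \ref{C*alg}),'' and your argument makes this reduction explicit by verifying that $\Omega^u$ is a no-cracks conical domain with vertex set $V^u$ and then applying those earlier results together with the combinatorial bookkeeping needed for the exact sequence in $(5)$.
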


We also have an analogue of Theorem \ref{thm.equiv} for polygonal domains with ramified cracks. Recall from (\ref{vertex}) that the number of true conical points of $\Omega^u$ is given by $\sharp (V^u) = l+m'+\alpha$ with $l$ the number of true conical
non-crack points of $\Omega$, $m'$ the number of conical crack points
with a not empty no-crack part and $\alpha:= k_1 + \cdots k_m$ can be regarded as the
total ramification number of $\Omega$ (with $m$ is the number of all
singular crack points). We then have:

\begin{proposition}\label{cor.equiv.crack}
Let $\Omega_1, \, \Omega_2\subset \R^2$ be two polygonal domains,
possibly with cracks, and $\cG^u_1$, $\cG^u_2$ be the unfolded
boundary groupoids as in (\ref{grpd.crack}). If, using the notation
above, $l_1+m'_1+\alpha_1= l_2+m'_2+\alpha_2$, then $\cG_1^u$ is
equivalent to $\cG_2^u$ and $C^*(\cG_1^u )$ is Morita equivalent to
$C^*(\cG^u_2)$
\end{proposition}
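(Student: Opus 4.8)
The strategy mirrors exactly the proof of Theorem \ref{thm.equiv}, the no-crack case: we want to exhibit a single groupoid $\cZ$ in which both $\cG^u_1$ and $\cG^u_2$ sit as full (wide-on-their-unit-subset) subgroupoids, and then invoke the Muhly--Renault--Williams theorem \cite{MRW} to pass from groupoid equivalence to Morita equivalence of the $C^*$-algebras. The key observation that makes this work is structural: by Equation (\ref{grpd.crack}), the boundary groupoid $\cG^u_i$ associated to a polygonal domain with ramified cracks is, up to the gluing with the pair groupoid of the (unfolded) smooth boundary, a disjoint union of $\sharp(V^u_i) = l_i + m'_i + \alpha_i$ pieces of the form $\cH \times \cP_{2k}$, one for each true vertex of the unfolded domain $\Omega_i^u$. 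So $\cG^u_i$ depends on $\Omega_i$ only through the list of these vertices and the number of connected components of the corresponding cone bases. The hypothesis $l_1 + m'_1 + \alpha_1 = l_2 + m'_2 + \alpha_2$ says exactly that $\Omega_1^u$ and $\Omega_2^u$ have the same number of true conical points, so this is really Theorem \ref{thm.equiv} applied to the unfolded domains, once one checks that unfolded domains are legitimate (generalized) conical domains with no cracks, which was already established when $\Omega^u$ was constructed in Subsection \ref{crack}.

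\textbf{Steps.} First I would recall, from the construction of $\Omega^u$ in (\ref{unfoldeddomain}) and the subsequent discussion, that $\Omega_i^u$ is a generalized polygonal domain with no cracks whose set of true vertices $V^u_i$ has cardinality $l_i + m'_i + \alpha_i$ (this is Equation (\ref{vertex}) together with $\sharp \cC^u = \alpha$); and that, by definition, $\cG^u_i$ is precisely the boundary groupoid (\ref{grpd.nocrack}) of $\Omega_i^u$. Second, I would note that Theorem \ref{thm.equiv} was stated for conical domains with no cracks in $\R^n$, and its proof only uses the description (\ref{grpd.nocrack})--(\ref{units.nocrack}) of the boundary groupoid in terms of the true vertices and their cone bases — it does not use that the domain is ``genuine'' (embedded in $\R^n$ in the classical sense) rather than a generalized one with angles possibly equal to $2\pi$. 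So Theorem \ref{thm.equiv} applies verbatim to $\Omega_1^u$ and $\Omega_2^u$: since $\sharp(V^u_1) = \sharp(V^u_2)$, the groupoids $\cG^u_1 = \cG(\Omega_1^u)$ and $\cG^u_2 = \cG(\Omega_2^u)$ are equivalent, and hence $C^*(\cG^u_1)$ and $C^*(\cG^u_2)$ are Morita equivalent. Concretely, one forms $Z := (\Omega^u_1 \sqcup \Omega^u_2)/\!\sim$, identifying the $i$-th true vertex of $\Omega^u_1$ with the $i$-th true vertex of $\Omega^u_2$ after choosing a bijection $V^u_1 \cong V^u_2$ (which exists by the cardinality hypothesis), checks $Z$ is again a generalized conical domain with no cracks, lets $\cZ := \cG(Z)$ be its boundary groupoid, and verifies $\cG^u_i = \cZ_{M_i}^{M_i}$ with $M_i$ meeting every orbit of $\cZ$ — the orbit computation being exactly as in Proposition \ref{Groupoid}(1)--(2): interior orbits are the smooth boundary components, which meet $M_i$, and a boundary orbit over a glued vertex $[x]$ is the (disjoint-union) cone base $\omega_x^{u,1} \sqcup \omega_x^{u,2}$, which meets $M_i$ in $\omega_x^{u,i}$.

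\textbf{Main obstacle.} The only genuine point requiring care — and the step I expect to be the crux — is the verification that the quotient $Z = (\Omega^u_1 \sqcup \Omega^u_2)/\!\sim$ is again a \emph{generalized} conical domain with \emph{no cracks} to which the construction (\ref{grpd.nocrack}) legitimately applies. For this one argues, as in the proof of Theorem \ref{thm.equiv}, that identifying the $i$-th true vertices of the two unfolded domains produces at $[x]$ a local cone with base $\omega_x^{u,1} \sqcup \omega_x^{u,2}$; since both pieces come from unfolded (hence crack-free) domains, $\overline{\omega_x^{u,1} \sqcup \omega_x^{u,2}} = \overline{\omega_x^{u,1}} \sqcup \overline{\omega_x^{u,2}}$, so $Z$ has no cracks, and since each $\omega_x^{u,i} \neq S^{n-1}$ (here $n=2$) and they are disjoint, the union is not $S^1$, so $[x]$ is a true vertex of $Z$. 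The remaining smooth boundary points and groupoid-structural compatibilities are inherited componentwise from $\Omega_1^u$ and $\Omega_2^u$ exactly as before, so no new difficulty arises there. One minor bookkeeping subtlety is that on a conical crack point $c_j$ with $\omega'_{c_j} \neq \emptyset$ the unfolding splits the cover into a non-crack point $c_{j0}$ plus a $k_{c_j}$-cover, so the bijection $V^u_1 \cong V^u_2$ must be chosen on the level of \emph{all} true vertices of the unfolded domains (the $p_i$'s, the $c_{j0}$'s, and the $c_{jh}$'s together), not on the vertices of $\Omega_1, \Omega_2$ directly; but this is precisely what the equality $l_1 + m'_1 + \alpha_1 = l_2 + m'_2 + \alpha_2 = \sharp(V^u_1) = \sharp(V^u_2)$ guarantees.
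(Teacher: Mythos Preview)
Your proposal is correct and matches the paper's approach exactly: the paper does not give an explicit proof of this proposition, but the preceding paragraph makes clear it is meant as a direct application of Theorem \ref{thm.equiv} to the unfolded domains $\Omega_1^u$ and $\Omega_2^u$, using the identification $\sharp(V^u_i)=l_i+m'_i+\alpha_i$ from (\ref{vertex}). Your write-up simply fills in the details the paper leaves implicit, including the verification that the glued space $Z$ is again a generalized crack-free conical domain and that the proof of Theorem \ref{thm.equiv} goes through for generalized (angle-$2\pi$-allowing) domains.
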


%%%%%%%%%%%%%%%%%%%%%%%%%%%%%%%%%%%%%%%%%%%%%%%%%%%

\smallskip

\section{$K$-theory of the layer potentials $C^*$-algebra}\label{Ktheory}

In this section, we compute the $K$-theory groups of the  layer potentials $C^*$-algebra of the boundary groupoid of a conical domain and of the associated
indicial algebras. We obtain, in particular,  that the groupoid $K$-theory is the same for domains with the same number of singularities. We refer to \cite{ConnesBook, RLL} for the general concepts and results regarding $K$-theory for $C^*$-algebras. See also \cite{LeGallMonth, MelroseN, Monthubert, Moroianu, Nicola} for some computations of the $K$-groups of $C^*$-algebras associated to singular domains.

We consider first the case of a straight cone $\Omega=\R^+\omega
\subset \R^n$, with $\omega\subset S^{n-1}$ and let $\cJ= \cH\times
(\pa \omega)^2$ as in Section \ref{straightcone}, where
$\cH=[0,\infty)\rtimes \R^+$. Then, from Lemma \ref{lem.C*.cJ}, if
  $n\geq 3$,
$$
  C^*(\cJ)\cong C^*(\cH)\otimes \cK\cong
  \left(\cC_0([0,\infty))\rtimes \R^+\right)\otimes \cK.
$$ 
and, if $n=2$, 
$$C^*(\cJ)\cong C^*(\cH)\otimes M_k(\C)=M_k(C^*(\cH)),$$
where $k$ is the number of elements of $\pa \omega$. Since
$K_j(C^*(\cH)\otimes \cK)=K_j(M_k(C^*(\cH)))=K_j(C^*(\cH))$, $j=0,1$,
and since, by the Connes' Thom isomorphism \cite{ConnesBook}
$$ 
     K_j(\cC_0([0,\infty))\rtimes \R^+) = K_{1-j}(\cC_0([0,\infty)))=0,
    \quad j=0,1
$$ 
(see \cite{RLL}) we get that
\begin{equation}
\label{Kcone}
K_*(C^*(\cJ))=0
\end{equation}
for the layer potentials $C^*$-algebra of a straight cone.

Now we consider the general case. We start with computing the $K$-groups of the
 indicial algebra, i.e., at the boundary.  We assume first that
$n \geq 3$. As in Definition \ref{domain}, we denote by $\Omega\subset
\R^n$ a domain with distinct conical points $\{p_1, p_2, \cdots,
p_l\}$, with no cracks. Also denote by $\omega_i\subset S^{n-1}$ the
basis of the cone that corresponds to $p_i$. (Note that $\omega_i$ may
be disconnected).  Following the notations in Section
\ref{GpdConstruction}, we have
\begin{equation*}
\cG|_{\pa M}= \coprod\limits_{i=1}^{l} (\pa\omega_i \times
\pa\omega_i) \times (\R^+ \times \{p_i\})
\end{equation*} 
with unit space $\pa M$. Since $C^*((\pa\omega_i)^2 ) = \cK$ and
$C^*(\R^+ \times \{p_i\})= \cC_0(\R)$, we obtain
\begin{equation*}
C^*(\cG|_{\pa M})=  \bigoplus\limits_{i=1}^{l} \cK \otimes \cC_0(\R).
\end{equation*}

Consider next the case $n=2$, where we allow $\Omega$ to have
cracks. Let $V^u$ be the set of vertices of the unfolded domain
$\Omega^u$, as in the previous section (so $x\in V^u$ if $x$ is a
non-crack conical point of $\Omega$ or $x$ is in a cover of a singular
crack point). Denote by $\omega_x^u$ the basis of the cone that
corresponds to a vertex $x$ in $\Omega^u$. In this case,
$\pa\omega_x^u$ is a discrete set. Let $k_x$ be the number of points
in $\pa\omega_x^u$ ($k_x$ is even.) We have $C^*(\pa\omega_x^u \times
\pa\omega_x^u)= C^*(\cP_{k_x}) = M_{k_x}(\C). $ Hence, we obtain
\begin{equation*}
C^*(\cG_{\pa M})= \bigoplus\limits_{x\in V^u} M_{k_x}(\C) \otimes
\cC_0(\R) = \bigoplus\limits_{x\in V^u} M_{k_x}(\cC_0(\R)).
\end{equation*}
In any case, since $K_j \left(\cK \otimes \cC_0(\R)\right)=K_j
\left(M_k( \C)\otimes \cC_0(\R)\right)= K_j \left(\cC_0(\R) \right)$
for $j=1,2$, and $K_0\left(\cC_0(\R) \right)= 0$, $K_1\left(\cC_0(\R)
\right)=\Z$ (see \cite{RLL}), we have
\begin{lemma}\label{Kpa}
Let  $l$ denote the number of conical points in $\Omega$, $n\geq 3$ or in $\Omega^u$, for $n=2$ (in this case, including crack points). Then
$$K_0(C^*(\cG|_{\pa M}))=0 \,\,\,\, \text{and} \,\,\, K_1(C^*(\cG|_{\pa M}))=\bigoplus\limits_{i=1}^l \Z =\Z^l.$$
\end{lemma}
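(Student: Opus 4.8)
The statement is an immediate consequence of the description of $C^*(\cG|_{\pa M})$ recorded in the paragraph just before it, together with the stability and additivity of $K$-theory, so the plan is short. First I would recall that by Proposition \ref{Groupoid}(2) (for $n\ge 3$) and Proposition \ref{prop.grpd.crack}(2) (for $n=2$, possibly with cracks), the restriction $\cG|_{\pa M}$ is a disjoint union of Lie groupoids of the form $(\pa\omega_i)^2\times(\R^+\times\{p_i\})$, one for each conical point $p_i$ --- where in the crack case ``conical point'' means a vertex of the unfolded domain $\Omega^u$, i.e., an element of the set $V^u=\Omega^{(0)}\cup\cC^u\cup\{c_{j0}\}$ of Equation (\ref{vertex}), which is exactly what the symbol $l$ abbreviates in the statement. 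Since $C^*$ carries disjoint unions of groupoids to direct sums and products to (minimal) tensor products (Proposition 4.5 in \cite{LN}), since $C^*((\pa\omega_i)^2)\cong\cK$ when $\dim\pa\omega_i>0$ while $C^*((\pa\omega_i)^2)\cong M_{k_i}(\C)$ when $\pa\omega_i$ is the $k_i$-point set (Example \ref{expl_pair_grpd}), and since $C^*(\R^+\times\{p_i\})\cong\cC_0(\R)$ (the factor is $C^*$ of the \emph{group} $\R^+=(0,\infty)\cong\R$, by Pontryagin duality), one obtains
\begin{equation*}
  C^*(\cG|_{\pa M})\;\cong\;\bigoplus_{i=1}^{l} B_i\otimes\cC_0(\R),\qquad B_i\in\{\cK,\,M_{k_i}(\C)\},
\end{equation*}
which is precisely the identification displayed before the lemma.

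Second, I would invoke the two standard structural properties of $K$-theory: it is stable, so $K_j(\cK\otimes A)=K_j(M_k(\C)\otimes A)=K_j(A)$, and it is additive over finite direct sums, so $K_j\big(\bigoplus_{i=1}^l A_i\big)\cong\bigoplus_{i=1}^l K_j(A_i)$; combining these gives $K_j(C^*(\cG|_{\pa M}))\cong\bigoplus_{i=1}^l K_j(\cC_0(\R))$. Finally, since $\cC_0(\R)$ is the suspension $S\C$, Bott periodicity (equivalently the suspension isomorphism, as in \cite{RLL}) yields $K_0(\cC_0(\R))\cong K_1(\C)=0$ and $K_1(\cC_0(\R))\cong K_0(\C)=\Z$. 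Therefore $K_0(C^*(\cG|_{\pa M}))=0$ and $K_1(C^*(\cG|_{\pa M}))\cong\Z^l$, as claimed.

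There is no serious obstacle in this argument; the only points that need attention are bookkeeping ones --- making sure the index set has cardinality $l$ uniformly, whether $l$ counts the true conical points of a domain with no cracks or the elements of $V^u$ of a polygonal domain with ramified cracks, and making sure that the factor coming from the boundary of the action groupoid $\cH$ is $C^*$ of the group $(0,\infty)$, hence $\cC_0(\R)$ rather than $\cC_0([0,\infty))$. Both are settled by the discussion in Section \ref{GpdConstruction} and the paragraph preceding the lemma, so the proof reduces to the two lines above.
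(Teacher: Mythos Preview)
Your proof is correct and follows essentially the same approach as the paper: the paper's argument is the discussion in the paragraphs immediately preceding the lemma, which identifies $C^*(\cG|_{\pa M})$ as a direct sum of copies of $\cK\otimes\cC_0(\R)$ (or $M_{k}(\C)\otimes\cC_0(\R)$ in dimension two), invokes stability to reduce to $K_j(\cC_0(\R))$, and then cites \cite{RLL} for $K_0(\cC_0(\R))=0$, $K_1(\cC_0(\R))=\Z$. Your write-up is slightly more explicit about additivity over direct sums and about obtaining $K_j(\cC_0(\R))$ via the suspension isomorphism, but the substance is identical.
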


Since there is an interaction between connected
components at the groupoid level, the $K$-groups do not
depend on the (number of) connected components of $\omega_i$ and $\pa \omega_i$. On the other hand, it is clear from the computations above that the number of conical points
plays an essential role for the $K$-theory of $C^*(\cG|_{\pa M})$.  This fact should be expected from Theorem \ref{thm.equiv}, since Morita equivalent $C^*$-algebras have isomorphic $K$-groups. Hence:

\begin{corollary}\label{isom.K}
Let $\Omega,\, \Omega'\subset \R^n$ be two conical domains, and let
$\cG$, $\cG'$ be the boundary groupoids as in
(\ref{grpd.nocrack}). Assume $\Omega$ and $\Omega'$ have the same
number of true conical points. Then
$$K_j(C^*(\cG))\cong K_j(C^*(\cG')), \quad j=0,1.$$
\end{corollary}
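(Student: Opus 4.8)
The plan is to deduce Corollary \ref{isom.K} directly from Theorem \ref{thm.equiv} together with the standard fact that Morita equivalent $C^*$-algebras have isomorphic $K$-theory groups. This is a very short argument once the machinery is in place, so the main work has already been done in Theorem \ref{thm.equiv}.

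First I would invoke Theorem \ref{thm.equiv}: since $\Omega$ and $\Omega'$ are conical domains (with no cracks, as the boundary groupoids are those of (\ref{grpd.nocrack})) having the same number of true conical points, the boundary groupoids $\cG$ and $\cG'$ are equivalent in the sense recalled just before that theorem, and consequently the layer potentials $C^*$-algebras $C^*(\cG)$ and $C^*(\cG')$ are Morita equivalent, by the Muhly--Renault--Williams theorem \cite{MRW}. Second, I would recall that Morita equivalent (separable) $C^*$-algebras have isomorphic $K$-theory: indeed a Morita equivalence $A \sim B$ of separable $C^*$-algebras is equivalent to a stable isomorphism $A\otimes \cK \cong B\otimes \cK$ (by the Brown--Green--Rieffel theorem), and $K_j(A\otimes\cK)\cong K_j(A)$ by stability of $K$-theory (see \cite{RLL}). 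Combining these gives $K_j(C^*(\cG))\cong K_j(C^*(\cG'))$ for $j=0,1$, which is the claim.

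I do not expect any serious obstacle here, since the statement is essentially a corollary in the literal sense. The one point requiring a sentence of care is the separability hypothesis needed to pass from Morita equivalence to stable isomorphism: one should note that $C^*(\cG)$ is separable because $\cG$ is a second-countable Lie groupoid with compact space of units $M = \pa'\Sigma(\Omega)$, so $\cC_c^\infty(\cG)$ is separable in the $C^*$-norm and hence so is its completion $C^*(\cG)$. (Alternatively, one may bypass separability entirely by using that Morita equivalence of $C^*$-algebras always implies isomorphic $K$-theory via Rieffel's induction on equivalence bimodules, which holds without separability.) With that remark in place, the proof is complete in two lines: apply Theorem \ref{thm.equiv}, then apply $K$-theory invariance under Morita equivalence.
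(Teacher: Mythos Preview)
Your proposal is correct and matches the paper's own argument exactly: the paper derives the corollary in one line from Theorem~\ref{thm.equiv} together with the fact that Morita equivalent $C^*$-algebras have isomorphic $K$-groups. Your additional remark on separability (or the bimodule alternative) is a harmless elaboration of a point the paper leaves implicit.
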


A similar result holds also for domains with cracks, replacing $\Omega$ by $\Omega^u$, that is, 
`number of true conical points' by `$ l+m'+\alpha$' as in Corollary \ref{cor.equiv.crack}.

This result allows us to replace, at the level of $K$-theory, a
conical domain by any other with the same number of true vertices.

\begin{remark}\label{rmk.K.bcal}
The $K$-theory of the $b$-groupoid ${^b\cG}$ with units $M$ was
computed in \cite{MelroseN, Monthubert}, in the more general context
of manifolds with corners. If $M$ has a smooth boundary, one
gets $K_0(C^*({^b\cG}))=0, \quad K_1(C^*({^b\cG}))=\Z^{p-1},$ where
$p$ is the number of connected components of $\pa M$.  As we noted, if
$\Omega$ is a conical domain such that $\pa \omega_i$ connected for
all conical points (so $n\geq 3$), we have $\cG={^b\cG}$, where $\cG$
is the boundary groupoid associated to $\Omega$, and in that case the
number of connected components of $\pa M$ is exactly the number of
conical points.  We therefore obtain, for such $\Omega$,
\begin{equation}
\label{Kth}
K_0(C^*({\cG}))=0, \quad K_1(C^*({\cG}))=\Z^{l-1},
\end{equation}
where $l$ is the number of true conical points.  Now, given \emph{any}
conical domain $\Omega\subset \R^n$, for $n\geq 3$, with no cracks, we
can use Corollary \ref{isom.K} and the fact that one can always
construct $\Omega'\subset \R^n$ with the same (number of) true conical
points of $\Omega$ and such that the basis $\omega_{i}\subset S^{n-1}$
of the cone at the vertices has $\pa \omega_i$ connected, to conclude
that (\ref{Kth}) also holds in the general case, if the dimension is
greater than two.
\end{remark}

We saw that if $n\geq 3$, then the $K$-groups of the layer potential $C^*$-algebra can be computed from the $K$-groups of the $b$-groupoid $C^*$-algebra, using
Corollary \ref{isom.K}. We will give a simple, direct proof that
(\ref{Kth}) holds in general, that works as well for $n=2$, and does not use groupoid equivalence, nor the computations for the $b$-groupoid. The main point is to use functoriality to reduce to the case of a straight cone and make use of the fact that in this case the $K$-groups of the boundary groupoid are trivial.
 
\begin{theorem}\label{thm.K}
Let $\Omega\subset \R^n $ be a conical domain with no cracks and $\cG$
be the boundary groupoid as in (\ref{grpd.nocrack}). Then
$$K_0(C^*({\cG}))=0, \quad K_1(C^*({\cG}))=\Z^{l-1},$$
where $l$ is the number of true conical points of $\Omega$. 
\end{theorem}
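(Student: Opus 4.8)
The plan is to use the six-term exact sequence in $K$-theory coming from the restriction exact sequence (\ref{ex.seq.rest}) for the closed, invariant submanifold $\pa M \subset M$, namely
\begin{equation*}
\xymatrix{
0 \ar[r]
  & \cK \ar[r]
      &C^*(\cG) \ar[r]
          &C^*(\cG|_{\pa M}) \ar[r]
              &0,
}
\end{equation*}
together with the computation of $K_*(C^*(\cG|_{\pa M}))$ in Lemma \ref{Kpa}. Since $K_0(\cK)=\Z$, $K_1(\cK)=0$, $K_0(C^*(\cG|_{\pa M}))=0$ and $K_1(C^*(\cG|_{\pa M}))=\Z^l$, the six-term sequence collapses to
\begin{equation*}
\xymatrix{
0 \ar[r]
  & K_0(C^*(\cG)) \ar[r]
      & 0 \ar[r]^<<<<<{\partial}
          & \Z \ar[r]^<<<<<{\iota_*}
              & K_1(C^*(\cG)) \ar[r]
                  & \Z^l \ar[r]^<<<<<{\partial}
                      & 0,
}
\end{equation*}
so that $K_0(C^*(\cG))=0$ automatically, and everything reduces to computing the index (boundary) map $\partial: \Z^l \cong K_1(C^*(\cG|_{\pa M})) \to K_0(\cK) = \Z$: one has $K_1(C^*(\cG)) \cong \Z \oplus \ker\partial$, and it suffices to show $\partial$ is surjective, equivalently that $\ker \partial \cong \Z^{l-1}$, to conclude $K_1(C^*(\cG)) = \Z^{l-1}$ (the extension splits since $\ker\partial$ is free).

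The heart of the argument, and the main obstacle, is thus to show that the boundary map $\partial$ is surjective, or more precisely to identify its kernel as $\Z^{l-1}$. The idea suggested in the excerpt is to use \emph{functoriality} to reduce to the straight cone case. First I would fix a conical point $p_i$ and consider the open invariant subset $U_i \subset M$ consisting of the face(s) of $M$ associated to $p_i$ together with the corresponding interior piece; restriction of $\cG$ to a neighborhood of the $i$-th face, or better, the inclusion of the reduced groupoid over the hyperfaces attached to $p_i$, gives a groupoid morphism compatible with the one for the straight cone with basis $\omega_{p_i}$. More concretely, each generator of $K_1(C^*(\cG|_{\pa M})) = \bigoplus_i \Z$ lives on the component $(\pa \omega_i)^2 \times (\R^+ \times \{p_i\})$, and the corresponding component of the straight-cone groupoid $\cJ_{p_i}$ maps into $\cG$; by (\ref{Kcone}) we have $K_*(C^*(\cJ_{p_i})) = 0$, and naturality of the six-term sequence for the restriction map $C^*(\cJ_{p_i}) \to$ (appropriate subquotient of) $C^*(\cG)$ forces the $i$-th generator of $\Z^l$ to be sent by $\partial$ to the generator of $K_0(\cK) = \Z$ coming from the pair groupoid of the interior. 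Hence each of the $l$ generators maps to (a generator of) $\Z$, which shows $\partial$ is surjective and that its kernel is the "difference" subgroup $\{(n_1,\dots,n_l) : \sum n_i = 0\} \cong \Z^{l-1}$ — up to signs and a choice of orientation that I would need to pin down carefully.

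The delicate points I would need to address are: (i) making the functoriality statement precise — i.e. exhibiting, for each $i$, a genuine $*$-homomorphism or a diagram of exact sequences relating $C^*(\cJ_{p_i})$ (or a quotient/ideal thereof) to $C^*(\cG)$ in a way that is compatible with the restriction-to-$\pa M$ maps on both sides; the cleanest route is probably to restrict $\cG$ to an open saturated neighborhood of the hyperfaces over $p_i$ and observe that this reduced groupoid is Morita equivalent (or even isomorphic) to $\cH \times (\pa\omega_i)^2$, so that the relevant piece of the Toeplitz-type six-term sequence for the straight cone injects into ours; (ii) checking that under this identification the connecting map on the $i$-th summand is an isomorphism onto $\Z$, which follows because the straight-cone boundary map must be an isomorphism $\Z \xrightarrow{\sim} \Z$ (as $K_*(C^*(\cJ)) = 0$ kills both ends); and (iii) assembling the $l$ local computations into the statement about $\partial: \Z^l \to \Z$, taking care that all $l$ local generators map to the \emph{same} generator of $K_0(\cK) = \Z$ (all interior pieces being connected to the single component $M_0 = \Omega_0$, which is where the ambient $\cK$ lives), so that $\partial(n_1,\dots,n_l) = \pm\sum_i n_i$ and $\ker\partial \cong \Z^{l-1}$. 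Since $\ker\partial$ is a free abelian group, the short exact sequence $0 \to \Z \to K_1(C^*(\cG)) \to \ker\partial \to 0$ splits, giving $K_1(C^*(\cG)) \cong \Z \oplus \Z^{l-1}$... which I should double-check against the claimed answer: in fact the correct bookkeeping is that $\iota_*: \Z = K_1(\cK) \to K_1(C^*(\cG))$ is \emph{zero} (since $K_1(\cK)=0$), so the sequence is really $0 \to K_1(C^*(\cG)) \to \Z^l \xrightarrow{\partial} \Z \to K_0(C^*(\cG)) \to 0$, giving directly $K_1(C^*(\cG)) = \ker\partial = \Z^{l-1}$ and $K_0(C^*(\cG)) = \mathrm{coker}\,\partial = 0$ once surjectivity of $\partial$ is established. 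So the only real work is surjectivity of $\partial$ and the identification of its kernel, via the straight-cone reduction.
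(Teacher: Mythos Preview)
Your proposal is correct and follows essentially the same route as the paper: the six-term sequence for $0 \to \cK \to C^*(\cG) \to C^*(\cG|_{\pa M}) \to 0$, reduced (after your self-correction) to showing the index map $\delta:\Z^l\to\Z$ is surjective, and then functoriality with the straight-cone groupoid $\cJ_{p_i}\cong \cH\times(\pa\omega_i)^2$, whose $K$-theory vanishes, to compute $\delta$ on generators.

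Two places where the paper is tighter than your sketch. First, the paper uses only \emph{one} conical point $p_l$: once the naturality square shows $\delta\circ (j_l)_*:\Z\to\Z$ is an isomorphism, $\delta$ is surjective and the split short exact sequence $0\to\ker\delta\to\Z^l\to\Z\to 0$ gives $\ker\delta\cong\Z^{l-1}$ immediately, so there is no need for your step (iii) tracking signs across all $l$ generators. Second, your phrase ``open saturated neighborhood'' in (i) cannot work, since $M_0$ is a single orbit; the paper instead takes the reduction $\cG_U^U=d^{-1}(U)\cap r^{-1}(U)$ for $U=[0,1)\times\pa\omega_l$, and then checks explicitly that the resulting (non-surjective) inclusion $\cK(L^2((0,1)\times\pa\omega_l))\hookrightarrow\cK(L^2(M_0))$ induces an isomorphism on $K_0$. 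That check is what makes the right-hand vertical arrow in the naturality square an isomorphism, and is the one concrete step you would need to add.
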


\begin{proof}
By Proposition \ref{C*alg}, we have the following six-term exact sequence
$$\begin{CD} K_0(\cK) @>{i_*}>> K_0(C^*(\cG)) @>{q_*}>>
  K_0(C^*(\cG|_{\pa M})) \\ @A{\delta}AA & & @VV{}V
  \\ K_1(C^*(\cG|_{\pa M})) @<{q_*}<< K_1(C^*(\cG)) @<{i_*}<<
  K_1(\cK),
\end{CD}$$
where $i_*$ is induced by the inclusion and $q_*$ by the restriction
map, and $\delta$ is the index map. Therefore, since $K_0(\cK)= \Z$,
$K_1(\cK)=0$ (see \cite{RLL}), we obtain from Lemma \ref{Kpa} 
$$\begin{CD} \Z @>{i_*}>> K_0(C^*(\cG)) @>{q_*}>> 0 \\ @A{\delta}AA &
  & @VVV \\ \Z^l @<{q_*}<<
  K_1(C^*(\cG)) @<{i_*}<<0.
\end{CD}$$

We shall  show that $\delta$ is surjective, which yields $K_0(C^*(\cG))=0$ (since in that case $i_*$ is surjective and  the zero map), and that $\ker\delta \cong \Z^{l-1}$, which proves the result, as $K_1(C^*(\cG))\cong Im (q_*) = \ker\delta$.

Now let $V_l\cong (0,1)\omega_l$ be a conical neighborhood of the conical point $p_l\in \Omega^{(0)}$. Then $V_l$ is open in $\Omega$ and $\cG_{V_l}=\cH \times (\pa\omega_l)^2$ is an open subgroupoid   of $\cG=\cG_\Omega$, with units $[0,1)\times \pa\omega_l\subset M$ open as well, and $\pa ([0,1)\times \pa\omega_l)=\pa M \cap [0,1)\times \pa\omega_l$. Hence, there is an induced morphism of $C^*$-algebras $i_l: C^*(\cG_{V_l}) \to C^*(\cG)$, obtained by extending compactly supported functions on $\cG_{V_l}$ to $\cG$ by $0$. At the interior, that is, for pair groupoids, denoting by $M_1:=(0,1)\times \pa\omega_l$
we get a map 
$$k_l : C^*(M_1 \times M_1)\cong\cK(L^2(M_1)) \to C^*(M_0 \times M_0)\cong\cK(L^2(M_0)),$$ which is injective but not surjective, as its image is given by the operators with smooth kernel supported in $M_1\times M_1$. If we let $j_l: C^*((\pa\omega_l)^2\times \R^+) \to C^*(\cG_{\pa M})$ be the inclusion, 
we have now a commutative diagram, 
\[\xymatrix{
0\ar[r]&C^*(((0,1)\times \pa\omega_l)^2)=\cK\ar[r]\ar@{->}[d]^{k_l} & C^*(\cG_{V_l})\ar[r]\ar@{->}[d]^{i_l}&C^*((\pa\omega_l)^2\times \R^+)\ar[r]\ar@{->}[d]^{j_l} & 0\\
0\ar[r]&C^*(M_0 \times M_0)=\cK\ar[r]&C^*(\cG)\ar[r]&C^*(\cG_{\pa M})\ar[r]&0,} \]

By functoriality of the index map, it follows that we have moreover a commutative diagram
$$\begin{CD}
\Z = K_1(C^*((\pa\omega_l)^2\times \R^+))	@>{\delta'}>>  K_0(\cK(L^2(M_1))) =\Z \\
@V{ (j_l)_*}VV 		@V{ (k_l)_*}V \cong V \\
\Z^l =K_1(C^*(\cG_{\pa M})) @>\delta>> K_0(\cK(L^2(M_0))) = \mathbb{Z},
\end{CD}$$
where $(j_l)_*(z)=(0,.., 0,z)\in \Z^l$ and $\delta'$ is the index map for the straight cone $V_l$.  
We claim that $(k_l)_*:  K_0(\cK(L^2(M_1))= \mathbb{Z}\to \mathbb{Z}=K_0(\cK(L^2(M_0))$ is an isomorphism. This follows since for any $n\in \N$ we can find a projection in $\cK(L^2(M_0))$ with kernel compactly supported in $M_1$ and $\dim p(L^2(M_0))=\dim p(L^2(M_1))=n$, so $(k_l)_*$ is surjective. (Recall that the identification of the $K_0$-group of the compact operators with $\Z$ is such that the equivalence class of a compact projection gets mapped to the dimension of its image, see for instance \cite{RLL}.)

Now, as we saw in the beginning of this section, $K_*(\cG_{V_l})=0$ for a straight cone, and therefore the index map $\delta'$ is an isomorphism. Hence, $\delta\circ (j_l)_*$ is also an isomorphism, so it follows straightaway that $\delta$ is surjective and $K_0(C^*(\cG))=0$. 

For $K_1$, note that we have a split exact sequence
\begin{equation*}
\xymatrix{ 
0 \ar[r]
  &\ker \delta \ar[r]
      & K_1(C^*(\cG_{\pa M})) \ar[r]^\delta
          & K_0(\cK) \ar[r]
              &0,
   }
\end{equation*}   
where the splitting is given by $\phi: K_0(\cK)\to K_1(C^*(\cG_{\pa M}))$, $\phi:= { (j_l)_*} (\delta')^{-1} { (k_l)_*}^{-1}$, hence $\Z^l= K_1(C^*(\cG_{\pa M}))\cong \ker \delta \oplus K_0(\cK)$, so that $\ker\delta \cong \Z^{l-1}=K_1(C^*(\cG))$.

\end{proof}
 
It follows immediately that:

\begin{corollary}\label{thm.K}
Let $\Omega\subset \R^2 $ be a polygonal domain possibly with cracks
and $\cG^u$ be the unfolded boundary groupoid as in
(\ref{grpd.crack}). Then
$$K_0(C^*({\cG^u}))=0, \quad K_1(C^*({\cG^u}))=\Z^{l + m' + \alpha
  -1},$$ where $l$ is the number of true conical no-crack points of
$\Omega$, $m'$ is the number of conical crack points with a not empty
no-crack part and $\alpha$ is the total ramification number of
$\Omega$.
\end{corollary}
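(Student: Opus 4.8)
The plan is to obtain this directly from Theorem~\ref{thm.K} (the $K$-theory computation for crack-free conical domains) applied to the \emph{unfolded} domain $\Omega^u$. Recall from Subsection~\ref{crack} that the unfolded boundary groupoid $\cG^u$ of (\ref{grpd.crack}) is, by construction, the boundary groupoid (\ref{grpd.nocrack}) associated to $\Omega^u$, and that $\Omega^u$ is a conical domain with no cracks --- generalized only in that some interior angles may equal $2\pi$, a feature that is invisible to the $K$-theory argument. By (\ref{vertex}) its set of true vertices is $V^u = \Omega^{(0)} \cup \cC^u \cup \{c_{j0}\}_{j=1,\dots,m'}$, so $\sharp V^u = l + m' + \alpha$. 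Feeding this into Theorem~\ref{thm.K} gives at once $K_0(C^*(\cG^u)) = 0$ and $K_1(C^*(\cG^u)) = \Z^{\sharp V^u - 1} = \Z^{l+m'+\alpha-1}$.

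If a self-contained argument is wanted, I would re-run that proof with $\cG$ replaced by $\cG^u$. First I would form the six-term exact sequence in $K$-theory from the extension in Proposition~\ref{prop.grpd.crack}(5), and invoke Lemma~\ref{Kpa} --- whose statement already counts the conical points of $\Omega^u$, crack points included --- to obtain $K_0(C^*(\cG^u_{\pa M})) = 0$ and $K_1(C^*(\cG^u_{\pa M})) = \Z^{\sharp V^u}$, together with $K_0(\cK) = \Z$, $K_1(\cK) = 0$. It then remains to show that the index map $\delta \colon \Z^{\sharp V^u} \to \Z$ is surjective with kernel $\Z^{\sharp V^u - 1}$. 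For that I would fix a single vertex $x \in V^u$, take a coordinate conical neighborhood $V_x$, and observe that $\cG^u_{V_x} = \cH \times (\pa \omega^u_x)^2$ is an open subgroupoid of $\cG^u$; since $\pa\omega^u_x$ is finite and $C^*(\cG^u_{V_x}) \cong M_{k_x}(C^*(\cH))$, equation (\ref{Kcone}) gives $K_*(C^*(\cG^u_{V_x})) = 0$. Functoriality of the index map, exactly as in the no-cracks case, then forces the restriction of $\delta$ to the $\Z$-summand indexed by $x$ to be an isomorphism; hence $\delta$ is onto and the sequence $0 \to \ker\delta \to \Z^{\sharp V^u} \xrightarrow{\ \delta\ } \Z \to 0$ splits, so that $K_1(C^*(\cG^u)) \cong \ker\delta \cong \Z^{\sharp V^u - 1}$.

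I do not anticipate any genuine obstacle: essentially all the content is the counting identity $\sharp V^u = l + m' + \alpha$ coming from (\ref{vertex}), plus the observation that, for the purposes of Section~\ref{Ktheory}, the unfolded domain behaves like an ordinary crack-free conical domain. The one point I would double-check is that the straight-cone vanishing (\ref{Kcone}) really does hold in dimension two --- but there $C^*(\cJ) \cong M_k(C^*(\cH))$ with $C^*(\cH) = \cC_0([0,\infty)) \rtimes \R^+$, and $K_*(C^*(\cH)) = 0$ by the Connes--Thom isomorphism, so $K_*(C^*(\cJ)) = 0$, which is exactly what the reduction uses.
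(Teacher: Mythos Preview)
Your proposal is correct and matches the paper's approach: the corollary is stated there as an immediate consequence of the preceding theorem applied to the unfolded domain $\Omega^u$, together with the vertex count $\sharp V^u = l + m' + \alpha$ from (\ref{vertex}). Your additional self-contained sketch and the check of (\ref{Kcone}) in dimension two are sound but go beyond what the paper actually writes.
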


The group $K_1(C^*(\cG))$ is the recipient of the so-called
{analytic index} for operators on groupoids
\cite{MonthubertPierrot} given by the connecting map in the exact
sequence (\ref{ex.seq.symb}) associated to the principal symbol of
pseudodifferential operators on $\cG$
$$\delta: K_0(\cC_0(S^*A))=K^0(S^*A) \to K_1(C^*(\cG))=\Z^{l-1},$$
where $S^*A$ is the sphere Lie algebroid of $\cG$. If $P\in
\overline{\Psi^0(\cG)}$ is elliptic, then its symbol defines a class
$[\sigma(P)]\in K^0(S^*A)$ and the analytic index of $P$ is defined as
$an-ind(P):=\delta[\sigma(P)]$. (The analogue of $\delta$ in the
smooth case maps $K^0(S^*M)\to K_1(\cK)=\Z$ and computes the Fredholm
index.)

In applications, we are often more interested in studying the actual
Fredholmness of (a representation of) $P\in \Psi^\infty(\cG)$, which
is what we shall do next.
%%%%%%%%%%%%%%%%%%%%%%%%%%%%%%%%%%%%%%%%%%%%%%%%%%%

\smallskip

\section{Fredholm Criterion\label{Fredholmness}}

In this section, we obtain a Fredholm criterion for pseudodifferential
operators the boundary groupoid $\cG$ associated to a conical domain,
possibly with cracks in two dimensions, as in (\ref{grpd.nocrack}) and
(\ref{grpd.crack}).

Recall that $ \Psi^m(\cG)$ was defined in Section \ref{groupoid} as
the space of smooth, left-invariant, uniformly supported families
$(P_x)_{x\in M}$, with $P_x\in \Psi^m(\cG_x)$ and that there is a well-defined
symbol map $\sigma_0: \overline{\Psi^0(\cG)}\to \cC(S^*A)$, where
$A=A(\cG)$. The vector representation $\pi$ of $\Psi^{\infty}(\cG)$ on
$\cC^\infty_c(M)$ is uniquely determined by the equation
$(\pi(P)f)\circ r := P(f\circ r),$ where $f\in \cC^\infty_c(M)$ and
$P=(P_x)\in \Psi^m(\cG)$. Let also $\pi_x$, $x\in M$ be the regular
representation of $\Psi^{\infty}(\cG)$ on $\cC^\infty_c(\cG_x)$ with
$\pi_x(P)=P_x$.

To obtain our main results, we will use the spectral
properties of $\Psi^m(\cG)$ obtained in \cite{LN} (see also
\cite{LMN}). We first consider the  invariant set $\pa M$. It follows from the exact sequence
(\ref{ex.seq.inv}), using once more the fact
that the $C^*$-algebra of the pair groupoid is isomorphic to the compact operators, that we have \begin{equation}
\begin{CD}
  0 @>>> \cK @>\pi>> \overline{\Psi^0(\cG)}
  @>{\cR_{\pa M}\oplus  \sigma_0}>>
  \overline{\Psi^0(\cG_{\pa M})}\times_{\cC_0(S^*A_{\pa M})}\cC_0(S^*A)@>>> 0,
\end{CD}
\end{equation}
where $\cR_{\pa M}: \Psi^m(\cG) \to \Psi^m(\cG_{\pa M})$ is the
restriction map. Hence, $P\in \overline{\Psi^0(\cG)}$ has a representation as a Fredholm operator on $L^2(M)$ if,
and only if, $(\cR_{\pa M}\oplus \sigma_0)(P)$ is invertible, that is,
if $P$ is elliptic and the restriction to the indicial algebra
$\cR_{\pa M}(P)$ is invertible. It also follows that
$$\cK=\ker ( \cR_{\pa M}\oplus \sigma_0 )= (\ker \cR_{\pa M}) \cap
C^*(\cG).$$ Moreover, we have (see Theorem 9.3 in \cite{LN}, and also
Theorem 4 in \cite{LMN}):

\begin{theorem} \label{LNTheorem}
Let $\cG$ be a Lie groupoid with units $M$ and $M_0$ be an invariant
open dense subset of $M$ such that $\cG|_{M_0}\cong M_0\times
M_0$. Assume that the restriction of $\cG$ to $M\backslash M_0$ is
amenable, and that the vector representation $\pi$ is injective.  Let
$P\in \Psi^m(\cG)$ or $P\in \overline{\Psi^0(\cG_{\pa M})}$.
%\red{If $P\in C^*(\cG)$ then $\sigma(P)=0$, hence $P$ never elliptic.} 
Then 
\begin{enumerate}
  \item $\pi(P):H^m(M)\rightarrow L^2(M)$ is Fredholm if, and only if,
    $P$ is elliptic and $\pi_x(P):= P_x :
    H^m(\mathcal{G}_x)\rightarrow L^2(\mathcal{G}_x)$ is invertible,
    for all $x\notin M_0$.
  \item  $\pi(P):H^m(M)\rightarrow L^2(M)$ is compact if, and
only if, $\sigma_0(P)=0$  and $\pi_x(P)=0$, for all $x\notin M_0$.
\end{enumerate}
\end{theorem}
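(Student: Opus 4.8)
The plan is to derive Theorem \ref{LNTheorem} essentially as a specialization of the abstract Fredholmness results for pseudodifferential operators on groupoids, namely Theorem 9.3 in \cite{LN} (see also Theorem 4 in \cite{LMN}), using the structural facts about $\cG$ already established. The key observation is that the hypotheses of the abstract theorem are met in our setting: by Proposition \ref{Groupoid} (respectively Proposition \ref{prop.grpd.crack}) the interior $M_0$ is an invariant open dense subset with $\cG|_{M_0}\cong M_0\times M_0$; by Proposition \ref{C*alg} (respectively Proposition \ref{prop.grpd.crack}(4)) the groupoid $\cG$ is amenable, and in particular so is $\cG|_{M\setminus M_0}=\cG|_{\pa M}$; and the vector representation $\pi$ is injective (this is part of the construction recalled in the introduction and follows from the fact that $M_0$ is dense and $\cG|_{M_0}$ is a pair groupoid, so $\pi$ restricted to $C^*(\cG_{M_0})\cong\cK$ is the standard faithful representation, together with the identification $\cK=(\ker\cR_{\pa M})\cap C^*(\cG)$).

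For part (1), I would argue as follows. Since $\cG|_{M_0}\cong M_0\times M_0$, its $C^*$-algebra is $\cK(L^2(M_0))=\cK(L^2(M))$, so an operator in $C^*(\cG)$ is compact in the vector representation precisely when it lies in the ideal $C^*(\cG_{M_0})$. Combining the symbol exact sequence (\ref{ex.seq.symb}) with the restriction exact sequence (\ref{ex.seq.rest}) for the closed invariant subset $\pa M$ gives, as noted just before the theorem, the exact sequence with kernel $\cK$ and quotient map $\cR_{\pa M}\oplus\sigma_0$. Hence $\pi(P)$ is Fredholm on $L^2(M)$ iff its image in the Calkin algebra is invertible, iff $(\cR_{\pa M}\oplus\sigma_0)(P)$ is invertible in $\overline{\Psi^0(\cG_{\pa M})}\times_{\cC_0(S^*A_{\pa M})}\cC_0(S^*A)$. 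Invertibility of the symbol component is ellipticity; invertibility of $\cR_{\pa M}(P)$ in $\overline{\Psi^0(\cG_{\pa M})}$ is, by amenability of $\cG_{\pa M}$, equivalent to invertibility of all regular representations $\pi_x(\cR_{\pa M}(P))=P_x:H^m(\cG_x)\to L^2(\cG_x)$ for $x\in\pa M$ (reduced norm equals full norm, so the family of regular representations separates the algebra and detects invertibility); one then observes that for $x\in\pa M$ the restricted operator $P_x$ coincides with the original $P_x$ since $\cR_{\pa M}$ is just restriction of the family. To handle the case $P\in\Psi^m(\cG)$ of arbitrary order (not just order $0$), I would reduce to the order-zero case by composing with an invertible order-$(-m)$ operator $\Lambda\in\Psi^{-m}(\cG)$ with invertible principal symbol and invertible indicial restrictions, using that $\Lambda$ furnishes an isomorphism $H^m(\cG_x)\to L^2(\cG_x)$ compatibly across the family; Fredholmness and invertibility are preserved, and the symbol/indicial conditions transform correctly.

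For part (2), $\pi(P)$ is compact iff its image in the Calkin algebra is zero, iff $(\cR_{\pa M}\oplus\sigma_0)(P)=0$ in the quotient, iff $\sigma_0(P)=0$ and $\cR_{\pa M}(P)=0$ in $\overline{\Psi^0(\cG_{\pa M})}$; since the regular representations $\pi_x$, $x\in\pa M$, are jointly faithful on the (amenable, hence $C^*_r=C^*$) algebra $\overline{\Psi^0(\cG_{\pa M})}$, the latter is equivalent to $\pi_x(P)=P_x=0$ for all $x\in\pa M=M\setminus M_0$. This gives exactly the stated characterization.

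The main obstacle I expect is the careful verification that the hypotheses of the cited abstract theorem hold \emph{verbatim} in our situation — in particular pinning down the injectivity of the vector representation $\pi$ and the precise identification of $\pi(C^*(\cG_{M_0}))$ with all of $\cK(L^2(M))$ (as opposed to a proper subalgebra), which requires knowing that $L^2(M_0)=L^2(M)$ (the boundary $\pa M$ has measure zero for the relevant volume form) and that the Sobolev spaces $H^m(M)$ defined by the Lie structure are the correct domains. The reduction from general order $m$ to order $0$ via an order-$(-m)$ reduction operator is also a point where one must be slightly careful that such $\Lambda$ exists with all the required invertibility properties across the groupoid fibers; this is standard (one can take $\Lambda=(1+\Delta_\cG)^{-m/2}$ for a suitable $\cG$-operator $\Delta_\cG$) but should be stated. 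Everything else is a direct application of (\ref{ex.seq.symb}), (\ref{ex.seq.rest}), (\ref{ex.seq.inv}) and amenability, together with Propositions \ref{Groupoid} and \ref{C*alg}.
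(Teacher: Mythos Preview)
Your proposal conflates two distinct things. Theorem \ref{LNTheorem} as stated in the paper is an \emph{abstract} result about an arbitrary Lie groupoid satisfying the listed hypotheses; the paper does not prove it at all, but simply quotes it from \cite{LN} (Theorem 9.3) and \cite{LMN} (Theorem 4). There is therefore nothing to compare your argument to: the paper offers no proof, only a citation.

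Your first paragraph is accordingly off target: you spend it verifying, via Propositions \ref{Groupoid}, \ref{C*alg}, \ref{prop.grpd.crack}, that the hypotheses hold for the specific boundary groupoid of a conical domain. But the theorem as stated \emph{assumes} these hypotheses for a general $\cG$; checking them for the particular $\cG$ of the paper is the content of the discussion and lemma that \emph{follow} Theorem \ref{LNTheorem} in the paper (leading to Theorem \ref{fredholm}), not a proof of Theorem \ref{LNTheorem} itself. Your remaining paragraphs then sketch a proof of the abstract statement via the exact sequence (\ref{ex.seq.inv}) and amenability; this is essentially the argument one finds in \cite{LN}, and is reasonable as an outline, but it is not what the paper does here. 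If your intent was to reproduce the proof of the cited result, you should say so explicitly and drop the hypothesis-verification paragraph; if your intent was to show the theorem applies to the boundary groupoid, then you are proving the wrong statement (that would be Theorem \ref{fredholm}, not Theorem \ref{LNTheorem}).
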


We will use this theorem for the boundary groupoid $\cG$ with the space of
units $M$ as in (\ref{grpd.nocrack}) and for the unfolded groupoid $\cG^u$, on polygonal domains with cracks (\ref{grpd.crack}) . It follows from
Proposition \ref{grpd.nocrack} that $M_0 \simeq \Omega_0$ is an
invariant open dense subset of $M$ and $ \cG_{M_0}\cong M_0\times
M_0.$ Moreover $\cG_{\pa M}$ is amenable, since $\cG$ is (or
directly). We only need to show that the vector representation is
injective. This will stem from the fact that our boundary groupoid
$\cG$ is Hausdorff and that $M_0$ is an open dense subset. (Recall
that a space is Hausdorff if, and only if, if any two functions are
the same on a dense subset, then they coincide.)

\begin{lemma}
Let $\cG$ be a Hausdorff Lie groupoid with units $M$ and $M_0$ be an
invariant open dense subset of $M$ such that $\cG|_{M_0}\cong
M_0\times M_0$. Let $P\in \Psi^\infty(\cG)$. Then, if $P_x=0$ for
$x\in M_0$, then $P=0$ on $M$. In particular, the vector
representation $\pi$ is injective.
\end{lemma}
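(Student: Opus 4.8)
The plan is to exploit the density of $M_0$ together with the Hausdorff hypothesis, working at the level of distributional kernels. First I would recall that by the kernel description of $\Psi^\infty(\cG)$ reviewed after Definition~\ref{def.C*}, an operator $P=(P_x)\in\Psi^\infty(\cG)$ is completely determined by a single conormal distribution $k_P\in I^m_c(\cG;M)$ supported on the compact set $\mathrm{supp}_\mu(P)$, via $P_xf_x(g)=\int_{\cG_x}k_P(gh^{-1})f_x(h)\,d\mu_x(h)$; thus $P=0$ on $M$ is equivalent to $k_P=0$ as a distribution on $\cG$, and $P_x=0$ for a given $x$ means precisely that $k_P$ vanishes on $\cG_x\cdot(\cG_x)^{-1}=d^{-1}(x)\cdot d^{-1}(x)^{-1}$, i.e. on the subset $\{g\in\cG: d(g)\in\text{orbit of }x\}$ after using right-invariance — more carefully, $P_x=0$ iff $k_P$ restricted to $r^{-1}(\{x\})$ (equivalently $d^{-1}(\{x\})$, by invariance of the kernel under the groupoid action) is zero.

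Next I would argue that $\mathcal{U}:=d^{-1}(M_0)$ is an open dense subset of $\cG$. Openness is immediate since $d$ is continuous and $M_0$ is open. For density: since $M_0$ is dense in $M$ and $d$ is a submersion (hence open), one checks that $d^{-1}(M_0)$ is dense in $d^{-1}(M)=\cG$ — an open surjective submersion pulls back dense sets to dense sets, because through any point $g\in\cG$ there is a local section-chart for $d$ identifying a neighborhood with (neighborhood in $M$)$\times$(fiber), and the preimage of the dense set $M_0$ is dense in that product. Then, the hypothesis $\cG|_{M_0}\cong M_0\times M_0$ together with $P_x=0$ for all $x\in M_0$ forces $k_P$ to vanish on all of $\mathcal{U}=d^{-1}(M_0)$: indeed over $M_0$ the operators $P_x$ are the operators of the pair groupoid with kernel $k_P|_{M_0\times M_0}$, so $P_x=0$ for every $x\in M_0$ gives $k_P|_{M_0\times M_0}=0$, and $M_0\times M_0 = d^{-1}(M_0)\cap r^{-1}(M_0)$; but by invariance of $k_P$ under the groupoid (the kernel of a right-invariant family is a function on $\cG$, constant along the relevant orbits) vanishing on $d^{-1}(M_0)\cap r^{-1}(M_0)$ already propagates to all of $d^{-1}(M_0)$, since every $g$ with $d(g)\in M_0$ has $r(g)$ in the same orbit, and $M_0$ being invariant means $r(g)\in M_0$ too — so in fact $d^{-1}(M_0)=d^{-1}(M_0)\cap r^{-1}(M_0)$ already. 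Hence $k_P\equiv 0$ on the open dense set $\mathcal{U}$.

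Finally I would invoke the Hausdorff hypothesis: $k_P$ is a distribution on $\cG$ which is smooth away from $M$ (an honest conormal/oscillatory-integral distribution near $M$ and a genuine smooth function off $M$), and it vanishes on the open dense set $\mathcal{U}$; I want to conclude $k_P\equiv 0$ on $\cG$. Off $M$, $k_P$ is continuous and vanishes on a dense set, hence vanishes identically on $\cG\setminus M$; this uses Hausdorffness in the form stated in the text (two continuous functions agreeing on a dense subset coincide). On a tubular neighborhood of $M$, $k_P$ is given by an oscillatory integral whose symbol is smooth; since $k_P$ vanishes on the open dense complement of $M$ within that neighborhood, the symbol must vanish, so $k_P=0$ there as well; alternatively one notes $M$ has measure zero and a conormal distribution vanishing off a measure-zero set vanishes. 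Therefore $k_P=0$ on $\cG$, i.e. $P=0$ on $M$. Injectivity of $\pi$ is then immediate: if $\pi(P)=0$ on $\cC^\infty_c(M)$, then by the defining relation $(\pi(P)f)\circ r = P(f\circ r)$ we get $P_x=0$ for every $x$ (in particular for $x\in M_0$), hence $P=0$; the only slightly delicate point is that for $P\in\Psi^m(\cG)$ one must check $\pi(P)=0$ actually forces $P_x f=0$ for all $x$ and all $f\in\cC^\infty_c(\cG_x)$ and not merely for $f$ of the form $g\mapsto (h\circ r)(g)$ — but this is exactly the standard fact that the vector representation of the pair-groupoid part is faithful, which is why restricting to $x\in M_0$ suffices. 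The main obstacle I anticipate is the argument near $M$: making precise that a conormal distribution to $M$ which vanishes on the open dense complement of $M$ must vanish identically (equivalently, that its full symbol vanishes), since this is the one place where "vanishing on a dense set $\Rightarrow$ vanishing" needs more than continuity.
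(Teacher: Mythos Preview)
Your approach is essentially the paper's: both use that $M_0$ is open and dense and that $\cG$ is Hausdorff to propagate the vanishing, and then deduce injectivity of $\pi$ from its equivalence with the regular representation $\pi_x$ for $x\in M_0$ (via $\cG_x\cong M_0$). The paper phrases this at the level of the smooth family $(P_x)$ and defers details to a reference, whereas you unpack it at the level of the reduced kernel $k_P$; this is a change of language, not of strategy. One caveat: your alternative claim that ``a conormal distribution vanishing off a measure-zero set vanishes'' is false as stated (the Schwartz kernel of any nonzero differential operator is a conormal distribution supported on the diagonal), so drop that line and keep your symbol argument, which is correct and is exactly what the ``smooth family'' hypothesis encodes.
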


\begin{proof}
The first assertion is really Corollary 4.3 in \cite{Nistor1}. We
outline the argument: if $P_x=0$, for some $x\in M_0$, $P\in
\Psi^m(\cG)$ (or $P\in \overline{\Psi^0(\cG)}$, or $P\in C^*(\cG)$),
then by left-invariance $P_y=0$ for all $y\in M_0$, that is,
$P_{M_0}=0$. Since $P$ is a smooth family, $\cG$ is Hausdorff, and
$M_0$ is an open dense subset of $M$, we have $P=0$. In particular,
the regular representation $\pi_x$ is injective, for $x\in M_0$. But
in this case, the vector representation $\pi$ is equivalent to $\pi_x$
using the isometry $\cG_x\to M_0$. Hence, $\pi$ is also injective (can
also see this directly from the definition) and the second assertion
follows.
 \end{proof}

Now let $\Omega\subset \R^n$ be a conical domain with no cracks. Let  $\Sigma(\Omega)$ be the desingularization of $\Omega$ and $\partial'\Sigma(\Omega)\subset \pa\Sigma(\Omega)$ be the union of hyper-faces at infinity of $\Sigma(\Omega)$, corresponding to a desingularization of $\pa \Omega$, that is, $\partial'\Sigma(\Omega)=M$ as in (\ref{units.nocrack}).
Recall  that we have the identification (see Proposition
\ref{Identification})
$$\cK^{m}_{\frac{n-1}{2}}(\partial\Omega) \cong H^{m}(M)$$
for all $m\in\mathbb{Z}_+$, where $\cK^{m}_{\frac{n-1}{2}}(\partial\Omega)$ is the $m$-th weighted Sobolev space as in (\ref{def.weighted}) and $ H^m(M)=H^{m}(\partial'\Sigma(\Omega))$ is the Sobolev space defined using the induced Lie structure on $\pa \Omega$. The above also holds for domains with cracks, replacing $\Omega$ by the unfolded domain $\Omega^u$.

\begin{theorem}\label{fredholm}
Let $\cG$ be the groupoid (\ref{grpd.nocrack}) with units $M$ associated to a domain with conical points $\Omega\subset \R^n$ without cracks. Let $\Omega^{(0)}=\{p_1,p_2,\cdots, p_l\}$ be the set of conical points. Suppose $P = (P_x)_{x\in M}\in \Psi^m(\cG)$
or $P\in \overline{\Psi^0(\cG)}$.

 Then, $\pi(P):
\cK^{m}_{\frac{n-1}{2}}(\partial\Omega) \rightarrow \cK^{0}_{\frac{n-1}{2}}(\partial\Omega) $ is
Fredholm if, and only if, $P$ is elliptic and
\begin{equation*}
  P_x : H^{m}(\R^+ \times \pa \omega_p)\rightarrow L^2(  \R^+ \times \pa \omega_p)
\end{equation*}
is invertible, for any $x=(p, x'_p)\in \pa M$, with $p\in \Omega^{(0)}$, $x'_p\in \pa \omega_p$.
\end{theorem}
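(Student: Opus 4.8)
The plan is to deduce the statement from Theorem \ref{LNTheorem}, applied to the boundary groupoid $\cG$, together with the Sobolev space identifications of Section \ref{weightedspaces}. First I would verify that $\cG$ satisfies the hypotheses of Theorem \ref{LNTheorem}. By Proposition \ref{Groupoid}, $M_0 = int(M) \cong \Omega_0$ is an invariant, open, dense subset of $M$ on which $\cG$ restricts to the pair groupoid, $\cG_{M_0} \cong M_0 \times M_0$; by Proposition \ref{C*alg}(1) the groupoid $\cG$ is amenable, hence so is the restriction $\cG_{\pa M}$; and by the Lemma preceding this theorem the vector representation $\pi$ is injective, since $\cG$ is Hausdorff and $M_0$ is dense in $M$. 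Thus Theorem \ref{LNTheorem}(1) applies, to $P \in \Psi^m(\cG)$ and to $P \in \overline{\Psi^0(\cG)}$ (in the latter case $m = 0$): $\pi(P) : H^m(M) \to L^2(M)$ is Fredholm if and only if $P$ is elliptic and $\pi_x(P) = P_x : H^m(\cG_x) \to L^2(\cG_x)$ is invertible for every $x \in M \setminus M_0 = \pa M$.

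It then remains to rewrite the objects $H^m(M)$, $L^2(M)$ and $\cG_x$ appearing in this criterion. By Proposition \ref{Identification}(b) there is a topological isomorphism $\cK^{m}_{\frac{n-1}{2}}(\pa\Omega) \cong H^{m}(\pa'\Sigma(\Omega)) = H^m(M)$ for every $m \in \Z_{\geqslant 0}$, and in particular $\cK^{0}_{\frac{n-1}{2}}(\pa\Omega) \cong L^2(M)$; since these are isomorphisms of Banach spaces, $\pi(P)$ is Fredholm as a map $\cK^{m}_{\frac{n-1}{2}}(\pa\Omega) \to \cK^{0}_{\frac{n-1}{2}}(\pa\Omega)$ precisely when it is Fredholm as a map $H^m(M) \to L^2(M)$. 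For the $d$-fibres at the boundary, Proposition \ref{Groupoid}(2) gives $\cG_{\pa M} = \coprod_{i=1}^{l} (\pa\omega_i)^2 \times (\R^+ \times \{p_i\})$, so that for $x = (p, x'_p) \in \pa M$ with $p = p_i \in \Omega^{(0)}$ and $x'_p \in \pa\omega_p$ the $d$-fibre $\cG_x = d^{-1}(x)$ is $\{x'_p\} \times \pa\omega_p \times \R^+ \cong \R^+ \times \pa\omega_p$, where $\pa\omega_p$ is the $d$-fibre of the pair groupoid $(\pa\omega_p)^2$ and $\R^+$ is the $d$-fibre over $0$ of $\cH = [0,\infty)\rtimes\R^+$. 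The metric induced on $\cG_x$ by the Lie algebroid $A(\cG) \cong {^bTM}$ (Proposition \ref{Groupoid}(3)) is the product of the $b$-metric on $\R^+$ with the round metric on $\pa\omega_p$, so $H^m(\cG_x)$ and $L^2(\cG_x)$ are exactly the spaces $H^{m}(\R^+ \times \pa\omega_p)$ and $L^2(\R^+ \times \pa\omega_p)$ of the statement, and $P_x = \pi_x(P)$. Feeding these identifications into the conclusion of the first paragraph yields precisely the asserted criterion.

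Most of the substance is carried by the two cited results — Theorem \ref{LNTheorem} (from \cite{LN}) and Proposition \ref{Identification} (from \cite{BMNZ}) — so the argument is essentially a matter of checking that our situation matches their hypotheses. The main point requiring care is therefore verifying that $\cG$ fits the framework of Theorem \ref{LNTheorem}: that $\cG$ is Hausdorff (needed for injectivity of the vector representation $\pi$) and that $\cG_{\pa M}$ is amenable; both are already established in Propositions \ref{Groupoid} and \ref{C*alg}. A second, minor subtlety is that the Sobolev space $H^m(M)$ entering Theorem \ref{LNTheorem} is the one defined via the Lie structure on $M = \pa'\Sigma(\Omega)$, and one must know this is the same space that Proposition \ref{Identification} identifies with $\cK^{m}_{\frac{n-1}{2}}(\pa\Omega)$ — but this is built into the definition recalled in Section \ref{weightedspaces}.

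No genuinely new estimate is needed, which is why the proof is short. The companion reduction, replacing invertibility of the operators $P_x$ on the $d$-fibres $\R^+\times\pa\omega_p$ by invertibility of an associated family of Mellin convolution operators, is the content of Theorem \ref{fredholm2} and is not used here.
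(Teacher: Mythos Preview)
Your proposal is correct and follows exactly the approach the paper takes: the paper does not give a separate proof of Theorem \ref{fredholm}, but rather verifies the hypotheses of Theorem \ref{LNTheorem} (invariance and density of $M_0$, pair-groupoid structure on $\cG_{M_0}$, amenability of $\cG_{\pa M}$, injectivity of $\pi$ via the preceding Lemma) and invokes the Sobolev identification of Proposition \ref{Identification}, just as you do. Your explicit identification of the $d$-fibres $\cG_x \cong \R^+ \times \pa\omega_p$ from Proposition \ref{Groupoid}(2) is the only step the paper leaves implicit, and you handle it correctly.
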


If $\Omega$ is the infinite straight cone with basis $\omega \subset S^{n-1}$, we have an identification $\cK^{m}_{\frac{n-1}{2}}(\pa\Omega)\cong H^{m}(\R^+ \times \pa \omega, g)$, and the metric $g= (r^{-1}dr)^2+(dx')^2$, where $x'$ are the coordinates on $S^{n-1}$. So in that case, we get for $x\in \pa \omega$,
\begin{equation*}
   P_x :\cK^{m}_{\frac{n-1}{2}}(\pa\Omega) \rightarrow
  \cK^{0}_{\frac{n-1}{2}}(\pa\Omega).
\end{equation*}

In case $\Omega$ is a polygonal domain with cracks, we can apply Theorem \ref{fredholm} to  the  groupoid (\ref{grpd.crack}) associated to the unfolded domain $\Omega^u$, as in Section \ref{crack}.  Let $\Omega^{(0)}=\{p_1,p_2,\cdots, p_l\}$  be the set of (true) conical points of $\Omega$ which are non-crack points, $\cC:=\{c_1, \cdots, c_m\}$ be the set of singular crack points, with $c_1, \cdots, c_{m'}$ the conical crack points with not empty no-cracks part, and $\cC^u:=\{c_{ji}\;|\; c_{ji} \text{ covers } c_j \in \cC, i=1, \cdots, k_{c_j}\}\subset \pa(\Omega^u)$ the set of covers of cracks. The set of vertices, i.e., conical points, of $\Omega^u$ is $V^u= \Omega^{(0)}\cup \cC^u \cup \{c_{j0}\}_{j=1, \cdots, m'}$.

\begin{corollary}\label{fredholm.crack}
Let $\cG^u$ be the groupoid (\ref{grpd.crack}) with units $M$ associated to a domain with conical points $\Omega\subset \R^n$ with cracks. 
Suppose $P = (P_x)_{x\in M}\in \Psi^m(\cG)$ or  $P\in \overline{\Psi^0(\cG)}$.
 Then, $\pi(P): \cK^{m}_{\frac{n-1}{2}}(\partial\Omega^u) \rightarrow \cK^{0}_{\frac{n-1}{2}}(\partial\Omega^u) $ is
Fredholm if, and only if, $P$ is elliptic and the following operators are invertible, for $x=(y, x'_y)\in \pa M$, with $y\in V^u$, $x'_y\in \pa\omega_y$:
\begin{itemize}
  \item  $P_x : H^{m}(\R^+ \times \pa \omega_p)\rightarrow L^2(  \R^+ \times \pa \omega_p)
$  with $y=p\in \Omega^{(0)}$;
  \item $P_x : H^{m}(\R^+ \times \pa \omega'_{c_j})\rightarrow L^2(  \R^+ \times \pa \omega'_{c_j})$, with $y= c_{j0}\in \cC^u$, $j=1,  \cdots, m'$;
  \item $P_x : H^{m}(\R^+ \times \pa I^h_{c_j})\rightarrow L^2(  \R^+ \times \pa I^h_{c_j})$
  with $y= c_{jh}\in \cC^u$, $j=1, 1, \cdots, m$, $h=1, \cdots k_{c_j}$ and $ I_{c_j}^h$ is the $h$-th connected component of $\omega_{c_j}$, respectively, of $\omega''_{c_j}$, if $c_j$ is non-conical, respectively, $c_j$ is a conical crack point.
\end{itemize}
\end{corollary}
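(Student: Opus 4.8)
The strategy is to reduce Corollary~\ref{fredholm.crack} to a direct application of Theorem~\ref{fredholm}, exploiting the fact --- established in Section~\ref{crack} --- that the unfolded domain $\Omega^u$ is a (generalized) polygonal domain \emph{with no cracks} and that the groupoid $\cG^u$ of (\ref{grpd.crack}) is precisely the boundary groupoid (\ref{grpd.nocrack}) associated to $\Omega^u$, with units $M\cong\pa'\Sigma(\Omega^u)$. So the first step is to check that the hypotheses of Theorem~\ref{fredholm} (equivalently, of Theorem~\ref{LNTheorem}) hold for $\cG^u$: by Proposition~\ref{prop.grpd.crack}, $M_0=\pa^u\Omega$ is an invariant, open, dense subset of $M$ with $\cG^u_{M_0}\cong M_0\times M_0$, and $\cG^u_{\pa M}$ is amenable; the vector representation $\pi$ of $\Psi^\infty(\cG^u)$ is injective by the injectivity lemma preceding Theorem~\ref{fredholm}, since $\cG^u$ is Hausdorff and $M_0$ is dense. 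Moreover $\cK^m_{\frac{n-1}{2}}(\pa\Omega^u)\cong H^m(M)$ by Proposition~\ref{Identification} applied to $\Omega^u$. Theorem~\ref{fredholm} then gives that $\pi(P)$ is Fredholm if and only if $P$ is elliptic and $P_x\colon H^m(\cG^u_x)\to L^2(\cG^u_x)$ is invertible for every $x\in\pa M$.

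The second step is purely a matter of unwinding $\pa M$ and the fibers $\cG^u_x$ from the unfolding construction. By (\ref{units.crack}) and (\ref{vertex}) we have $\pa M=\bigcup_{y\in V^u}\{y\}\times\pa\omega^u_y$, with $V^u=\Omega^{(0)}\cup\cC^u\cup\{c_{j0}\}_{j=1,\dots,m'}$, and --- as in the proof of Proposition~\ref{Groupoid}, now using Proposition~\ref{prop.grpd.crack}(2) --- the $d$-fiber over $x=(y,x'_y)\in\pa M$ is $\cG^u_x\cong\R^+\times\pa\omega^u_y$. It remains only to substitute the value of $\omega^u_y$ in the three cases, as fixed by the unfolding in Section~\ref{crack}: $\omega^u_y=\omega_{p_i}$ when $y=p_i\in\Omega^{(0)}$; $\omega^u_y=\omega'_{c_j}$ at the non-crack cover vertex $y=c_{j0}$, $j=1,\dots,m'$; and $\omega^u_y=I^h_{c_j}$, the $h$-th connected component of $\omega_{c_j}$ (resp.\ of $\omega''_{c_j}$), when $y=c_{jh}\in\cC^u$ and $c_j$ is a non-conical (resp.\ conical) singular crack point. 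Since $V^u$ is the full set of vertices of $\Omega^u$, the three resulting families of operators are exactly those in the statement, which completes the argument.

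I do not expect a genuine obstacle here: once Theorem~\ref{fredholm} is in hand, the corollary is essentially a translation through the dictionary of the unfolding. The one delicate point is the bookkeeping of Section~\ref{crack} --- that a cover point $c_{jh}$ of a singular crack point carries the \emph{single} connected component $I^h_{c_j}$ as the base of its local cone, while the auxiliary non-crack vertex $c_{j0}$ carries the possibly disconnected no-crack part $\omega'_{c_j}$ --- but this has already been arranged there (see the discussion around (\ref{unfoldeddomain})--(\ref{vertex})), so no new work is needed. As in the closing remark of Theorem~\ref{fredholm}, one may also record that each $P_x$ may equivalently be viewed as acting between the weighted Sobolev spaces of the corresponding model cone, i.e., $\cK^m_{\frac{n-1}{2}}(\R^+\times\pa\omega^u_y)\to\cK^0_{\frac{n-1}{2}}(\R^+\times\pa\omega^u_y)$.
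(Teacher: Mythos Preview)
Your proposal is correct and follows exactly the route the paper takes: the paper states the corollary as an immediate application of Theorem~\ref{fredholm} to the unfolded domain $\Omega^u$ (which has no cracks), and you have simply made explicit the verification of hypotheses via Proposition~\ref{prop.grpd.crack} and the case-by-case identification of $\omega^u_y$ from Section~\ref{crack}. The paper gives no separate proof beyond the one-line reduction, so your write-up is a faithful (and more detailed) expansion of the intended argument.
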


Noting now that  $\pa  M= \bigcup\limits_{p \in \Omega^{(0)}} \{ p \} \times \pa
\omega_p$ can be written as a union of closed invariant subsets associated to each conical point $p_i$, we can also apply Theorem 7.4 in \cite{LN}. Let $\cR_{p_i}: \Psi^m(\cG)\to \Psi^m(\cG_{\{ p \} \times \pa\omega_p})=\Psi^m(\R^+ \times (\pa \omega_i)^2 \times \{p_i\})$. Recall that pseudodifferential operators with kernel in $\R^+ \times (\pa \omega_i)^2 \times \{p_i\}$ coincide with Mellin convolution operators on $\R^+\times\pa \omega_{p_i}$. If $Q\in \Psi^m(\R^+ \times (\pa \omega_i)^2 \times \{p_i\})$, we denote by $\widetilde{Q}: H^{m}(\R^+ \times \pa \omega_p)\rightarrow L^2( \R^+\times \pa \omega_p)$ the induced Mellin convolution operator.

\begin{theorem}\label{fredholm2}
Let $\cG$ be the groupoid (\ref{grpd.nocrack}) with units $M$ associated to a domain with conical points $\Omega\subset \R^n$ without cracks. Let $\Omega^{(0)}=\{p_1,p_2,\cdots, p_l\}$ be the set of conical points. Suppose $P = (P_x)_{x\in M}\in \Psi^m(\cG)$
or $P\in \overline{\Psi^0(\cG)}$.
 Then, $\pi(P): \cK^{m}_{\frac{n-1}{2}}(\partial\Omega) \rightarrow \cK^{0}_{\frac{n-1}{2}}(\partial\Omega) $ is
Fredholm if, and only if, $P$ is elliptic and $$\pi \circ \cR_{p_i}(P) : H^{m}(\pa \omega_{p_i})\rightarrow L^2( \pa \omega_{p_i})$$ is invertible, for any  $p_i\in \Omega^{(0)}$, or, equivalently, if the Mellin convolution operator $\widetilde{ \cR_{p_i}(P) }: H^{m}(\R^+ \times \pa \omega_p)\rightarrow L^2( \R^+\times \pa \omega_p)$ is an invertible operator. 
\end{theorem}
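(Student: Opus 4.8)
The plan is to derive Theorem \ref{fredholm2} from Theorem \ref{fredholm} (or directly from Theorem \ref{LNTheorem}) by reorganizing the invariant boundary set $\pa M = \bigcup_{i=1}^l \{p_i\}\times \pa\omega_{p_i}$ into its connected pieces, one for each conical point, and then translating the invertibility of the regular representations $P_x$ into invertibility statements at the level of the restriction maps $\cR_{p_i}$. First I would observe that each $Y_i := \{p_i\}\times \pa\omega_{p_i}$ is a \emph{closed} invariant submanifold of $M$, so Theorem 7.4 of \cite{LN} applies with the family $\{Y_i\}$: $\pi(P)$ is Fredholm on $H^m(M)\to L^2(M)$ if and only if $P$ is elliptic and each $\cR_{Y_i}(P)$ is invertible in $\overline{\Psi^0(\cG_{Y_i})}$, where $\cG_{Y_i}=\cG|_{Y_i} = \R^+\times(\pa\omega_{p_i})^2\times\{p_i\}$. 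Via Proposition \ref{Identification}, Fredholmness on $H^m(M)$ is the same as Fredholmness of $\pi(P)\colon \cK^{m}_{\frac{n-1}{2}}(\pa\Omega)\to \cK^{0}_{\frac{n-1}{2}}(\pa\Omega)$, which gives the first equivalence in the statement once we identify invertibility of $\cR_{Y_i}(P)$ in the $C^*$-algebra $\overline{\Psi^0(\cG_{Y_i})}$ with invertibility of its vector representation $\pi\circ\cR_{p_i}(P)$ on $L^2(\pa\omega_{p_i})$ (and, on Sobolev scales, $H^m(\pa\omega_{p_i})\to L^2(\pa\omega_{p_i})$). For that identification I would use that $\cG_{Y_i}$ has the product form $\R^+\times(\pa\omega_{p_i})^2$, so $\overline{\Psi^0(\cG_{Y_i})}$ acts faithfully through its vector representation after desingularizing $\R^+\rtimes\R^+$ — here $\R^+$ is the trivial-isotropy part and $(\pa\omega_{p_i})^2$ the pair groupoid — and invertibility in the norm-closed algebra is equivalent to invertibility of the represented operator, since the vector representation of $\overline{\Psi^0}$ of a groupoid whose non-pair part is amenable is injective (the Lemma preceding Theorem \ref{fredholm} applies here as well with $M_0=(0,\infty)\times\pa\omega_{p_i}$).

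The second, equivalent formulation — invertibility of the Mellin convolution operator $\widetilde{\cR_{p_i}(P)}$ on $\R^+\times\pa\omega_{p_i}$ — is then just a change of model. As recalled at the end of Section \ref{straightcone} and in the Proposition characterizing $\Psi^m(\cG)$, an element of $\Psi^m(\cG_{\pa M})$ restricted to the invariant set $\{p_i\}\times\pa\omega_{p_i}$ is given by a kernel $\kappa_i$ on $(\pa\omega_{p_i})^2\times(\R^+\times\{p_i\})$, and the rule $\widetilde{\kappa_i}(r,s,x',y') := \kappa_i(r/s,x',y')$ realizes $\cR_{p_i}(P)$ as a Mellin convolution operator $\widetilde{\cR_{p_i}(P)}$ on $\R^+\times\pa\omega_{p_i}$. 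So I would check that the regular representation $\pi_x(P)=P_x\colon H^m(\R^+\times\pa\omega_{p_i})\to L^2(\R^+\times\pa\omega_{p_i})$ appearing in Theorem \ref{fredholm}, for $x=(p_i,x'_{p_i})$, coincides (up to the unitary $\R^+\cong\R$, $r\mapsto\log r$, which intertwines Mellin convolution with ordinary convolution) with $\widetilde{\cR_{p_i}(P)}$, and that by $\R^+$-invariance this operator does not depend on the choice of $x'_{p_i}\in\pa\omega_{p_i}$. Since $\R^+$ acts with trivial isotropy away from $0$, all the fibers $\cG_x$, $x\in Y_i$, are isomorphic and the family $(P_x)_{x\in Y_i}$ is constant under this identification; hence "invertible for all $x\in Y_i$" reduces to invertibility of the single Mellin convolution operator per conical point.

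The main obstacle is the bookkeeping identification between the two pictures of the indicial operator at $p_i$: namely, matching the \emph{abstract} invertibility of $\cR_{p_i}(P)$ in $\overline{\Psi^0(\cG_{Y_i})}$ (which is how \cite{LN} phrases the criterion) with \emph{concrete} invertibility of the represented Mellin convolution operator on the Sobolev scale $H^m\to L^2$ of $\R^+\times\pa\omega_{p_i}$ — and with the operator $P_x$ of Theorem \ref{fredholm} — being careful that $\cG_{Y_i}=\R^+\times(\pa\omega_{p_i})^2$ is \emph{not} the pair groupoid (the $\R^+$-factor carries nontrivial isotropy only at $0$, but $0\notin Y_i$, so on $Y_i$ it looks like a pair groupoid, which is why the three notions collapse). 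Once the faithfulness of the relevant vector/regular representations is in place — which follows as in the Lemma above since $\cG_{Y_i}$ is Hausdorff with $(0,\infty)\times\pa\omega_{p_i}$ open dense and $\cG|_{(0,\infty)\times\pa\omega_{p_i}}$ a pair groupoid — everything else is a direct application of Theorem 7.4 of \cite{LN} together with Theorem \ref{fredholm} and Proposition \ref{Identification}, and the proof is short.
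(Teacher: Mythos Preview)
Your proposal is correct and follows essentially the same approach as the paper: the paper's argument (given in the paragraph preceding the theorem rather than in a formal proof environment) is precisely to decompose $\pa M=\bigcup_i \{p_i\}\times\pa\omega_{p_i}$ into closed invariant subsets, invoke Theorem~7.4 of \cite{LN}, and identify the restricted operators $\cR_{p_i}(P)$ with Mellin convolution operators on $\R^+\times\pa\omega_{p_i}$. You supply more detail on the faithfulness of the relevant representations and the bookkeeping between abstract and concrete invertibility, but the skeleton is identical.
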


There is an analogue of the result above for domains with cracks, where we replace $\Omega$ by the unfolded domain $\Omega^u$.

We expect that  Theorems \ref{fredholm} and \ref{fredholm2} can be used to prove (or disprove)Fredolmness of certain integral operators arising from boundary value problems, namely in applications of the layer potentials method, generalizing the results in \cite{NQ1}. This goal will be pursued in a forthcoming paper.

\section*{Conclusion}

To a conical domain $\Omega$ we associate a boundary groupoid $\cG$ with space of units given by a desingularization $M$ of $\pa \Omega$. The layer potentials $C^*$-algebra associated to $\Omega$ is defined as the groupoid convolution algebra $C^*(\cG)$. For straight cones, this $C^*$-algebra is Morita equivalent to an algebra of Wiener-Hopf operators. In two dimensions, we allow  domains with ramified cracks, using the notion of unfolded domain. 

We study the structure of the boundary groupoid and its $C^*$-algebra, which is identified with an ideal  in the norm closure of order $0$ pseudodifferential operators on $\cG$. The invariant subsets of $\cG$ are given by  the smooth part $\Omega_0\subset \pa \Omega$, and we get also an invariant subset for each conical point $p$ given by $\{p\}\times \pa\omega_p$, with $\omega_p$ the basis of the local cone at $p$, so that
$$\cG_{\Omega_0}= \Omega_0\times \Omega_0, \quad \cG_{\{p\}\times \pa\omega_p}= (\pa\omega_p\times  \pa\omega_p)\times \R^+\times \{p\}.$$
It is seen that, up to equivalence, the boundary groupoid only depends on the number of conical points of $\Omega$, yielding Morita equivalent $C^*$-algebras. Moreover, we compute the $K$-groups of the layer potentials $C^*$-algebra, finding that
$$ K_0(C^*(\cG))=0, \quad K_1(C^*(\cG))=\Z^{l-1},$$
where $l$ is the number of (true) conical points of $\Omega$. 
As for pseudodifferential operators on $\cG$, at the interior we get a pseudodifferential operator on $\Omega_0\subset \pa\Omega$ smooth, and for each conical point $p$ we have a Mellin convolution operator $Q_p$ on $\R^+\times \pa\omega_p$.
Our final result is a Fredholm criterion, which yields that for $P\in \overline{\Psi^0(\cG)}$ or $P\in \Psi^\infty(\cG)$, with $P=(P_x)$, then the vector representation $\pi(P)$  on $C^\infty_c(M)$, 
$$\pi(P): \cK^{m}_{\frac{n-1}{2}}(\partial\Omega) \rightarrow \cK^{0}_{\frac{n-1}{2}}(\partial\Omega) $$
  is a Fredholm operator between suitable weighted Sobolev spaces if, and only if, $P$ is elliptic and the following family of operators is invertible, for $x\in \pa M$,
\begin{equation*}
  P_x : H^{m}(\R^+ \times \pa \omega_p)\rightarrow L^2(  \R^+ \times \pa \omega_p).
\end{equation*}
Alternatively, we can replace the second condition above by invertibility of the Mellin convolution operators $Q_p$, for each conical point $p$. 

We expect our results to apply to the study of Fredholmness and compactness of boundary operators related to applications of the method of layer potentials for boundary value problems on conical domains.

%%%%%%%%%%%%%%%%%%%%%%%%%%%%%%%%%%%%%%%%%%%%%%%%%%%

\bibliographystyle{plain}
\bibliography{Reference}

\end{document}